\newtheorem{theorem}{Theorem}[section]
\newtheorem{prop}[theorem]{Proposition}
\newtheorem{lemma}[theorem]{Lemma}
\newtheorem{coro}[theorem]{Corollary}
\newtheorem{prop-def}{Proposition-Definition}[section]
\theoremstyle{definition}
\newtheorem{defn}[theorem]{Definition}
\newtheorem{remark}[theorem]{Remark}
\newtheorem{exam}[theorem]{Example}
\newtheorem{appn}[theorem]{Application}
\newcommand{\nc}{\newcommand}
\newcommand {\emptycomment}[1]{}
\nc{\delete}[1]{{}}
\nc{\mmargin}[1]{}
\nc{\mlabel}[1]{\label{#1}}  
\nc{\mcite}[1]{\cite{#1}}  
\nc{\mref}[1]{\ref{#1}}  
\nc{\meqref}[1]{\eqref{#1}}  
\nc{\mbibitem}[1]{\bibitem{#1}} 
\newcommand{\g}{\mathfrak g}
\newcommand{\uu}{\mathscr U}
\newcommand{\huaO}{{\mathcal{OT}}}
\newcommand{\bk}{{\mathbf{k}}}
\nc{\vep}{\varepsilon}
\nc{\bin}[2]{ (_{\stackrel{\scs{#1}}{\scs{#2}}})}  
\nc{\binc}[2]{(\!\! \begin{array}{c} \scs{#1}\\
		\scs{#2} \end{array}\!\!)}  
\nc{\bincc}[2]{  ( {\scs{#1} \atop
		\vspace{-1cm}\scs{#2}} )}  
\nc{\oline}[1]{\overline{#1}}
\nc{\mapm}[1]{\lfloor\!|{#1}|\!\rfloor}
\nc{\bs}{\bar{S}}
\nc{\cast}{{\,\mbox{\raisebox{.8pt}{$\scriptstyle \circledast$}}\,}}
\nc{\la}{\longrightarrow}
\nc{\ot}{\otimes}
\nc{\rar}{\rightarrow}
\nc{\dar}{\downarrow}
\nc{\dap}[1]{\downarrow \rlap{$\scriptstyle{#1}$}}
\nc{\defeq}{\stackrel{\rm def}{=}}
\nc{\dis}[1]{\displaystyle{#1}}
\nc{\dotcup}{\ \displaystyle{\bigcup^\bullet}\ }
\nc{\hcm}{\ \hat{,}\ }
\nc{\hts}{\hat{\otimes}}
\nc{\hcirc}{\hat{\circ}}
\nc{\lleft}{[}
\nc{\lright}{]}
\nc{\curlyl}{\left \{ \begin{array}{c} {} \\ {} \end{array}
	\right .  \!\!\!\!\!\!\!}
\nc{\curlyr}{ \!\!\!\!\!\!\!
	\left . \begin{array}{c} {} \\ {} \end{array}
	\right \} }
\nc{\longmid}{\left | \begin{array}{c} {} \\ {} \end{array}
	\right . \!\!\!\!\!\!\!}
\nc{\ora}[1]{\stackrel{#1}{\rar}}
\nc{\ola}[1]{\stackrel{#1}{\la}}
\nc{\scs}[1]{\scriptstyle{#1}} \nc{\mrm}[1]{{\rm #1}}
\nc{\dirlim}{\displaystyle{\lim_{\longrightarrow}}\,}
\nc{\invlim}{\displaystyle{\lim_{\longleftarrow}}\,}
\nc{\dislim}[1]{\displaystyle{\lim_{#1}}} \nc{\colim}{\mrm{colim}}
\nc{\mvp}{\vspace{0.3cm}} \nc{\tk}{^{(k)}} \nc{\tp}{^\prime}
\nc{\ttp}{^{\prime\prime}} \nc{\svp}{\vspace{2cm}}
\nc{\vp}{\vspace{8cm}}
\nc{\modg}[1]{\!<\!\!{#1}\!\!>}
\nc{\intg}[1]{F_C(#1)}
\nc{\lmodg}{\!<\!\!}
\nc{\rmodg}{\!\!>\!}
\nc{\cpi}{\widehat{\Pi}}
\nc{\labs}{\mid\!}
\nc{\rabs}{\!\mid}
\nc{\btr}{\blacktriangleright}
\nc{\bmot}{\otimes_\lR^b}
\newcommand{\MM}{\ensuremath{\mathcal{M}}\xspace}
\newcommand{\XM}{\mathfrak{X}(\MM)}
\newcommand{\CM}{\mathcal{C}^\infty(\MM)}
\newcommand{\XMF}{\mathfrak{X}_F(\MM)}
\newcommand{\CMF}{\mathcal{C}^\infty_F(\MM)}
\DeclareMathOperator{\Span}{Span}
\nc{\ad}{\mrm{ad}}
\nc{\rRB}{\mathsf{rRB}}
\nc{\cocrRB}{\mathsf{cocrRB}}
\nc{\PH}{\mathsf{PH}}
\nc{\cocPH}{\mathsf{cocPH}}
\nc{\ann}{\mrm{ann}}
\nc{\Ad}{\mrm{Coad}}
\nc{\Aut}{\mrm{Aut}}
\nc{\Der}{\mrm{Der}}
\nc{\Sym}{\mrm{Sym}}
\nc{\br}{\mrm{bre}}
\nc{\can}{\mrm{can}}
\nc{\Cont}{\mrm{Cont}}
\nc{\rchar}{\mrm{char}}
\nc{\cok}{\mrm{coker}}
\nc{\de}{\mrm{dep}}
\nc{\dtf}{{R-{\rm tf}}}
\nc{\dtor}{{R-{\rm tor}}}
\nc{\Dif}{\mrm{Diff}}
\nc{\Div}{\mrm{Div}}
\nc{\End}{\mrm{End}}
\nc{\Ext}{\mrm{Ext}}
\nc{\Fil}{\mrm{Fil}}
\nc{\Fr}{\mrm{Fr}}
\nc{\Frob}{\mrm{Frob}}
\nc{\Gal}{\mrm{Gal}}
\nc{\GL}{\mrm{GL}}
\nc{\Gr}{\mrm{Gr}}
\nc{\Hom}{\mrm{Hom}}
\nc{\Hoch}{\mrm{Hoch}}
\nc{\hsr}{\mrm{H}}
\nc{\hpol}{\mrm{HP}}
\nc{\id}{\mrm{id}}
\nc{\im}{\mrm{im}}
\nc{\inv}{\mrm{inv}}
\nc{\Id}{\mrm{Id}}
\nc{\ID}{\mrm{ID}}
\nc{\Irr}{\mrm{Irr}}
\nc{\incl}{\mrm{incl}}
\nc{\length}{\mrm{length}}
\nc{\NLSW}{\mrm{NLSW}}
\nc{\Lie}{\mrm{Lie}}
\nc{\mchar}{\rm char}
\nc{\mpart}{\mrm{part}}
\nc{\ql}{{\QQ_\ell}}
\nc{\qp}{{\QQ_p}}
\nc{\rank}{\mrm{rank}}
\nc{\rcot}{\mrm{cot}}
\nc{\rdef}{\mrm{def}}
\nc{\rdiv}{{\rm div}}
\nc{\rtf}{{\rm tf}}
\nc{\rtor}{{\rm tor}}
\nc{\res}{\mrm{res}}
\nc{\SL}{\mrm{SL}}
\nc{\Spec}{\mrm{Spec}}
\nc{\tor}{\mrm{tor}}
\nc{\Tr}{\mrm{Tr}}
\nc{\tr}{\mrm{tr}}
\nc{\wt}{\mrm{wt}}
\nc{\bfk}{{\bf k}}
\nc{\bfone}{{\bf 1}}
\nc{\bfzero}{{\bf 0}}
\nc{\detail}{\marginpar{\bf More detail}
	\noindent{\bf Need more detail!}
	\svp}
\nc{\gap}{\marginpar{\bf Incomplete}\noindent{\bf Incomplete!!}
	\svp}
\nc{\FMod}{\mathbf{FMod}}
\nc{\Int}{\mathbf{Int}}
\nc{\Mon}{\mathbf{Mon}}
\nc{\remarks}{\noindent{\bf Remarks: }}
\nc{\Rep}{\mathbf{Rep}}
\nc{\Rings}{\mathbf{Rings}}
\nc{\Sets}{\mathbf{Sets}}
\nc{\Diff}{\mathbf{Diff}}
\nc{\Inte}{\mathbf{Inte}}
\nc{\U}{\mathrm{U}}
\newcommand{\CWM}{C^{\infty}(M)}
\nc{\BB}{{\mathbb B}}   \nc{\CC}{{\mathbb C}}
\nc{\DD}{{\mathbb D}}   \nc{\EE}{{\mathbb E}}
\nc{\FF}{{\mathbb F}}   \nc{\GG}{{\mathbb G}}
\nc{\HH}{{\mathbb H}}   \nc{\LL}{{\mathbb L}}
\nc{\NN}{{\mathbb N}}   \nc{\PP}{{\mathbb P}}
\nc{\QQ}{{\mathbb Q}}   \nc{\RR}{{\mathbb R}}
\nc{\TT}{{\mathbb T}}   \nc{\VV}{{\mathbb V}}
\nc{\ZZ}{{\mathbb Z}}   \nc{\TP}{\widetilde{P}}
\nc{\cala}{{\mathcal A}}    \nc{\calc}{{\mathcal C}}
\nc{\cald}{\mathcal{D}}     \nc{\cale}{{\mathcal E}}
\nc{\calf}{{\mathcal F}}    \nc{\calg}{{\mathcal G}}
\nc{\calh}{{\mathcal H}}    \nc{\cali}{{\mathcal I}}
\nc{\call}{{\mathcal L}}    \nc{\calm}{{\mathcal M}}
\nc{\caln}{{\mathcal N}}    \nc{\calo}{{\mathcal O}}
\nc{\calp}{{\mathcal P}}    \nc{\calr}{{\mathcal R}}
\nc{\cals}{{\mathcal S}}    \nc{\calt}{{\Omega}}
\nc{\calv}{{\mathcal V}}    \nc{\calw}{{\mathcal W}}
\nc{\calx}{{\mathcal X}}
\nc{\fraka}{{\mathfrak a}}
\nc{\frakb}{\mathfrak{b}}
\nc{\frakg}{\mathfrak{g}}
\nc{\frakk}{{\frak k}}
\nc{\frakl}{{\frak l}}
\nc{\fraks}{{\frak s}}
\nc{\frakB}{{\frak B}}
\nc{\frakm}{{\frak m}}
\nc{\frakM}{{\frak M}}
\nc{\frakp}{{\frak p}}
\nc{\frakW}{{\frak W}}
\nc{\frakX}{{\frak X}}
\nc{\frakS}{{\frak S}}
\nc{\frakA}{{\frak A}}
\nc{\frakx}{{\frakx}}
\nc{\ynr}[1]{\textcolor{orange}{\underline{Yunnan:}#1 }}
\nc{\lir}[1]{\textcolor{red}{\underline{Li:}#1 }}
\nc{\adir}[1]{\textcolor{blue}{\underline{Adrien:}#1 }}
\nc{\lR}{R}
\nc{\lL}{L}
\begin{document}

\title[]{Post-Hopf algebroids, post-Lie-Rinehart algebras and geometric numerical integration}

\author{Adrien Busnot Laurent}
\address{Univ Rennes, Inria (Research team MINGuS), IRMAR (CNRS UMR 6625) and ENS Rennes, France}
\email{Adrien.Busnot-Laurent@inria.fr}

\author{Yunnan Li}
\address{School of Mathematics and Information Science, Guangzhou University,
Guangzhou 510006, China}
\email{ynli@gzhu.edu.cn}

\author{Yunhe Sheng}
\address{Department of Mathematics, Jilin University, Changchun 130012, Jilin, China}
\email{shengyh@jlu.edu.cn}

\begin{abstract}
In this paper, we introduce the notion of post-Hopf algebroids, generalizing the pre-Hopf algebroids introduced in \cite{BL} in the study of exotic aromatic S-series. We construct action post-Hopf algebroids through actions of post-Hopf algebras. We show that the universal enveloping algebra of a post-Lie-Rinehart algebra (post-Lie algebroid) is  naturally a post-Hopf algebroid. As a byproduct, we construct the free post-Lie-Rinehart algebra using  a magma algebra with a linear map to the derivation Lie algebra of a commutative associative algebra.  Applications in geometric numerical integration on manifolds are given. \end{abstract}


\renewcommand{\thefootnote}{}
\footnotetext{2020 Mathematics Subject Classification.
17B38, 
16T05, 
17B35, 
41A58, 
65L06. 
}
\keywords{post-Hopf algebroid,  post-Lie algebroid, post-Lie-Rinehart algebra, numerical   integration}

\maketitle
%

\tableofcontents

\allowdisplaybreaks
\section{Introduction}

The first purpose of this paper is to study the universal enveloping of a post-Lie algebroid by introducing the notion of post-Hopf algebroids. The second purpose is to give applications of post-Hopf algebroids in braiding operators on Hopf algebroids and numerical analysis on manifolds.

\subsection{Post-Lie algebras, post-Hopf algebras and post-Lie algebroids}

The notion of a post-Lie algebra was introduced by Vallette from his study of Koszul duality of operads in \mcite{Val}, and can be applied to the study of generalized Lax pairs \mcite{BGN} and regularity structures \cite{BK}.
Munthe-Kaas and Lundervold found that post-Lie algebras also naturally appear in differential geometry and play important roles in the study of numerical integration on  manifolds \mcite{ML}.
As the integration of post-Lie algebras, the notion of post-Lie groups was introduced in \cite{PostG}, which can be used to construct set-theoretical solutions of the Yang-Baxter equation. See \cite{AEM} for good examples of post-groups.

In \cite{LST}, the notion of post-Hopf algebras was introduced as an abstraction from two basic examples, namely the group algebra of a post-group and the universal enveloping algebra of a post-Lie algebra. A novel property is that any cocommutative post-Hopf algebra gives rise to a sub-adjacent Hopf algebra with a generalized Grossman-Larson product. There is naturally a post-Lie algebra structure on the space of primitive elements, and a post-group structure on the set of group-like elements in an arbitrary post-Hopf algebra. Also, cocommutative post-Hopf algebras are essentially equivalent to cocommutative Hopf braces, and thus naturally provide Yang-Baxter operators~\cite{LST,ZZMG}. See~\cite{VRP,Sci} for two different kinds of generalizations of post-Hopf algebras along this connection.

 The notion of Lie algebroids was introduced by Pradines in 1967, which are generalizations of
 Lie algebras and tangent bundles. Just as Lie algebras are the infinitesimal objects of Lie groups,
 Lie algebroids are the infinitesimal objects of Lie groupoids. Lie algebroids and Lie groupoids play important roles in differential geometry, foliation theory, Poisson geometry and mathematical physics.  See \cite{Ma} for general theories about Lie
 algebroids and Lie groupoids.
As the geometric generalization of a post-Lie algebra, the notion of a post-Lie algebroid was also introduced by Munthe-Kaas and Lundervold in their study of numerical integration~\cite{ML}. See also \cite{MSV} for further applications.

\subsection{Hopf algebroids and pre-Hopf algebroids}

Roughly speaking, bialgebroids, resp. Hopf algebroids, are the appropriate generalization of bialgebras, resp. Hopf algebras over a commutative ring (often a field) to algebraic structures over a noncommutative base algebra.
Bialgebroids appeared independently at different times and places, in algebra, algebraic topology and Poisson geometry, in various versions of generality. The notion of a bialgebroid in its full generality was given by Takeuchi in 1970s, under the name $\times_R$-bialgebras. Later his ideas were rediscovered independently by J.-H. Lu \cite{Lu}.
Similarly to the relation between Hopf algebras and groups, Hopf algebroids can be viewed as a quantization of groupoids.
The term of Hopf algebroids was first introduced by Haynes R. Miller in his 1975 doctoral dissertation for the purpose to describe the $E_2$-term of the Adams-Novikov spectral sequence, and also with other applications in algebraic topology.
In \cite{Mal}, Maltsiniotis gave a definition of Hopf algebroids with commutative base algebras and their images under the source and target maps lie in the centers of total algebras.
In \cite{Lu}, Lu generalized the notion of Hopf algebroids, in which neither the total algebras nor the base algebras are required to be commutative.
After that there appear several other different definitions of Hopf algebroids for various purposes; see \cite{BS,Sch,HM}.

In this paper, we adopt Lu's definition of Hopf algebroids in \cite{Lu}, fit for our fundamental example: the universal enveloping algebra of a Lie algebroid (Lie-Rinehart algebra).
So far the bialgebroid structure on the universal enveloping algebra of a Lie algebroid has been extensively studied with fruitful results; see e.g. \cite{MM,BKS}. On the other hand, a natural construction of cocommutative Hopf algebroids from a finite groupoid was given
in \cite{Gh}, generalizing the group algebra of a group.

Recently, Bronasco and the first author uncovered in \cite{BL} that the pre-Hopf algebroid and the Grossman-Larson Hopf algebroid are the underlying algebraic structures associated to the laws of composition and substitution of exotic aromatic S-series, and used them to provide the algebraic foundations of stochastic numerical analysis (see also \cite{LV2}).
They later extended these structures with planar rooted trees and naturally obtained post-Hopf algebras in \cite{BBH}.
This allowed for the creation of high-order intrinsic numerical methods for stochastic differential equations on manifolds.

\subsection{Main results and outline of the paper}
The purpose of this paper is twofold. On the one hand, we study the universal enveloping algebra of a post-Lie-Rinehart algebra (post-Lie algebroid). On the other hand, we generalize the pre-Hopf algebroid introduced in  \cite{BL} to the post-algebraic structure context, and give applications in geometric numerical integration.

For these purposes, first we introduce the notion of post-Hopf algebroids, and show that a post-Hopf algebroid gives rise to a Hopf algebroid, which is called the Grossman-Larson Hopf algebroid. Through actions of post-Hopf algebras, we construct a class of examples, called action post-Hopf algebroids. We show that post-Hopf algebroids give rise to braiding operators on the associated Grossman-Larson Hopf algebroid. Then we introduce the notion of post-Lie-Rinehart algebras as the algebraic analog of post-Lie algebroids, and construct the free object using a magma algebra with a linear map to $\Der(R)$, where $R$ is a commutative associative algebra. The universal enveloping algebra of a post-Lie-Rinehart algebra is naturally a post-Hopf algebroid. Applications of this universal enveloping post-Hopf algebroid and the action post-Hopf algebroid in numerical analysis are provided for the study of volume-preserving Lie-group methods.

The article is organized as follows. In Section~\ref{sec:post-hopf}, we  introduce the notion of (weak) post-Hopf algebroids and show the existence of a sub-adjacent Hopf algebroid associated with a post-Hopf algebroid, so-called the Grossman-Larson Hopf algebroid. A useful construction called the action post-Hopf algebroid is provided (Theorem~\ref{thm:action-post-Hopf}). Also, a post-Hopf algebroid naturally induces a braiding operator on the Grossman-Larson Hopf algebroid~(Theorem~\ref{thm:ph-braiding}).
In Section~\ref{sec:post-LR}, first we introduce the notion of post-Lie-Rinehart algebras and give the construction of their free objects (Theorem~\ref{thm:free-post-LR}).
Then we show that the universal enveloping algebra of a post-Lie-Rinehart algebra is a (weak) post-Hopf algebroid (Theorem~\ref{thm:uea-post-LR}). In Section \ref{sec:app}, we outline applications of post-Lie-Rinehart algebras and post-Hopf algebroids in numerical analysis.

\vspace{0.1cm}

\noindent
{\bf Convention.}
In this paper, we fix an algebraically closed ground field $\bk$ of characteristic 0 and a commutative associative unital algebra $\lR$. All the objects under discussion, including vector spaces, linear maps, algebras and tensor products, are taken over $\bk$ by default if without emphasis.

 \vspace{0.1cm}
 \noindent
{\bf Acknowledgements. } This research is supported by NSFC (Grant No. 12471060, 12071094, W2412041) and ANR-25-CE40-2862-01 (MaStoC - Manifolds and Stochastic Computations).
We give warmest thanks to Xiao Han for helpful comments.

\section{Post-Hopf algebroids}\label{sec:post-hopf}

In this section, we introduce the notion of post-Hopf algebroids, which are generalizations of post-Hopf algebras. In particular, we give the action post-Hopf algebroids using actions of post-Hopf algebras. We introduce the notion of braiding operators on Hopf algebroids, and show that post-Hopf algebroids give rise to braiding operators on the Grossman-Larson Hopf algebroids.

\subsection{Bialgebroids and Hopf algebroids}

First we adopt the following definition of Hopf algebroids with commutative base algebras as a special situation in \cite{Lu}; see also \cite[\S~2.1]{KT}.
\begin{defn}\label{defi:Hopf-algebroid}
Take the commutative unital algebra $\lR$ as the base algebra, and $H$ another unital algebra as the total algebra.
  A {\bf bialgebroid} structure on $H$ over $\lR$ consists of the following data:
\begin{enumerate}[(i)]
\item
An algebra homomorphism $\iota:\lR\to H$ as the source and also target map, giving the following canonical left $\lR$-module structure on $H$,
\begin{align*}
&\lambda:\lR\otimes H\to H,\quad f\otimes x\mapsto \iota(f)x,
\end{align*}
and we write $\iota(f)x$ as $fx$ for short.
\item
An $\lR$-module map $\Delta:H\to H\otimes_\lR H,\ x\to x_1\otimes x_2$ as the coproduct, satisfying
\begin{enumerate}[(a)]
\item
$(\Delta\otimes_\lR\id_H)\Delta = (\id_H\otimes_\lR\Delta)\Delta$.
\item
$\Delta(x)(\iota(f)\otimes 1_H-1_H\otimes \iota(f))=0$ for any $f\in \lR$ and $x\in H$,
so inducing an algebra structure on the image $\Delta(H)$.
\item
$\Delta$ is an algebra homomorphism from $H$ to $\Delta(H)$. In particular, $\Delta(1_H)=1_H \otimes_\lR 1_H$.
\end{enumerate}
\item
An $\lR$-module map $\varepsilon:H\to \lR$ as the counit, satisfying
\begin{enumerate}[(a)]
\item
$\varepsilon(1_H)=1_\lR$.
\item
$\ker\varepsilon$ is a left ideal of $H$, namely $\varepsilon(x(y-\iota\varepsilon(y)))=0$ for any $x,y\in H$.
\item
$\lambda(\varepsilon\otimes\id_H)\Delta=\id_H$ and  $\lambda(\varepsilon\otimes\id_H)\Delta^{\rm op}=\id_H$, where $\Delta^{\rm op}$ is the opposite of $\Delta$.
\end{enumerate}
\end{enumerate}

\quad We say  that $H$ is cocommutative if $\Delta=\Delta^{\rm op}$.

  A {\bf Hopf algebroid} is a bialgebroid $H$ over $\lR$ which admits an algebra anti-isomorphism $S:H\to H$ as the antipode map, satisfying
\begin{enumerate}[(a)]
\item
$S\iota=\iota$,
\item
$m_H(S\otimes\id_H)\Delta = \iota\varepsilon S$,
\item
$m_H(\id_H\otimes S)\Delta = \iota\varepsilon$.
\end{enumerate}
\end{defn}

\begin{remark}
It is straightforward to see that if $\iota(\lR)$ lies in the center $Z(H)$ of $H$, then $H\otimes _\lR H$ is an $\lR$-algebra, and both the coproduct $\Delta$ and the counit $\varepsilon$ are $R$-algebra homomorphisms. In this situation, we call $H$ an {\bf $\lR$-bialgebra}.
\end{remark}

\begin{defn}\label{defn:Hopf-R-algebra}
A Hopf algebroid $H$ over $\lR$ is called a {\bf Hopf $\lR$-algebra}, if $\iota(\lR)$ lies in the center $Z(H)$ of $H$, the antipode map $S$ of $H$ is $\lR$-linear and $\varepsilon S=\varepsilon$.
\end{defn}
Note that all structural maps of a Hopf $\lR$-algebra are $\lR$-linear.

\begin{exam}\label{ex:tensor-Hopf}
Let $(A,\Delta_A,\varepsilon_A,S_A)$ be a Hopf algebra. The tensor algebra $H=\lR\otimes A$ has a natural Hopf $\lR$-algebra structure $(H,\Delta_H,\varepsilon_H,S_H,\iota)$, where
\begin{eqnarray*}
\iota(f) &=& f1_A,\\
\Delta_H(fa) &=& fa_1\otimes_\lR 1_\lR a_2,\\
\varepsilon_H(fa) &=& f\varepsilon_A(a),\\
S_H(fa) &=& fS_A(a)
\end{eqnarray*}
for any $fa\in \lR\otimes A$ with $f\in \lR$ and $a\in A$. We call $H=\lR\otimes A$ the  {\bf tensor product Hopf $\lR$-algebra}.
\end{exam}

\begin{lemma}\label{lem:Hopf-R-algebra}
If $H$ is a   cocommutative Hopf algebroid over $\lR$ such that $\iota(\lR)\subseteq Z(H)$, then $H$ is a Hopf $\lR$-algebra.
\end{lemma}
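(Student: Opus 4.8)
The plan is to verify the two conditions beyond a bare Hopf algebroid that appear in Definition~\ref{defn:Hopf-R-algebra}: that the antipode $S$ is $\lR$-linear and that $\varepsilon S=\varepsilon$. The remaining condition, $\iota(\lR)\subseteq Z(H)$, is part of the hypothesis. I will use repeatedly the identity $\varepsilon\iota=\id_\lR$, which is immediate since $\varepsilon$ is a unital left $\lR$-module map: $\varepsilon(\iota(f))=\varepsilon(f1_H)=f\varepsilon(1_H)=f$ for $f\in\lR$.

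The first and main step is to show that the centrality hypothesis forces the counit $\varepsilon$ to be a unital algebra homomorphism $H\to\lR$. This is precisely the assertion of the Remark following Definition~\ref{defi:Hopf-algebroid}, and it is short to justify: the counit axiom stating that $\ker\varepsilon$ is a left ideal gives $\varepsilon(xy)=\varepsilon(x\,\iota\varepsilon(y))$ for all $x,y\in H$, and since $\iota\varepsilon(y)\in\iota(\lR)\subseteq Z(H)$ is central we may rewrite $x\,\iota\varepsilon(y)=\iota\varepsilon(y)\,x=\varepsilon(y)x$, whence $\varepsilon(xy)=\varepsilon(\varepsilon(y)x)=\varepsilon(y)\varepsilon(x)=\varepsilon(x)\varepsilon(y)$ using the $\lR$-linearity of $\varepsilon$ and the commutativity of $\lR$. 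I expect this to be the real obstacle: in Lu's general definition the counit is only required to have a left-ideal kernel and is typically not multiplicative, and it is exactly centrality that upgrades this to genuine multiplicativity, which is what makes the subsequent argument run.

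Next I would verify that $S$ is $\lR$-linear. Since $S$ is an algebra anti-homomorphism with $S\iota=\iota$, for $f\in\lR$ and $x\in H$ one has $S(fx)=S(\iota(f)x)=S(x)S(\iota(f))=S(x)\iota(f)$, and centrality then gives $S(x)\iota(f)=\iota(f)S(x)=fS(x)$, so $S(fx)=fS(x)$.

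Finally, to deduce $\varepsilon S=\varepsilon$ I would apply $\varepsilon$ to the antipode axiom $x_1S(x_2)=\iota\varepsilon(x)$. The right-hand side gives $\varepsilon(\iota\varepsilon(x))=\varepsilon(x)$, while the left-hand side, by the multiplicativity of $\varepsilon$ from the first step, equals $\varepsilon(x_1)\varepsilon S(x_2)$; using the $\lR$-linearity of $\varepsilon S$ and the counit identity $\varepsilon(x_1)x_2=x$, this in turn equals $\varepsilon S(\varepsilon(x_1)x_2)=\varepsilon S(x)$. Comparing both evaluations yields $\varepsilon S(x)=\varepsilon(x)$. With all three requirements of Definition~\ref{defn:Hopf-R-algebra} checked, $H$ is a Hopf $\lR$-algebra. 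It is worth noting that these verifications use only the centrality of $\iota(\lR)$ and not the cocommutativity of $H$; the latter is natural in the intended applications but does not appear to be needed for the present statement.
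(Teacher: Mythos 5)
Your proof is correct, and the two halves compare differently with the paper's argument. The $\lR$-linearity of $S$ is proved exactly as in the paper: $S(fx)=S(x)S(\iota(f))=S(x)\iota(f)=fS(x)$. For $\varepsilon S=\varepsilon$, however, you take a genuinely different route. The paper starts from $\varepsilon(S(x))$, uses the antipode axiom $m_H(S\otimes\id_H)\Delta=\iota\varepsilon S$ to write $\varepsilon(S(x))=\varepsilon(S(x_1))\varepsilon(x_2)$, then invokes \emph{cocommutativity} to swap the legs of the coproduct, $\varepsilon(S(x_1))\varepsilon(x_2)=\varepsilon(x_1)\varepsilon(S(x_2))$, and finally uses the other antipode axiom $m_H(\id_H\otimes S)\Delta=\iota\varepsilon$ to conclude. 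You instead apply $\varepsilon$ only to the second antipode axiom $x_1S(x_2)=\iota\varepsilon(x)$ and close the loop by rewriting $\varepsilon(x_1)\varepsilon(S(x_2))=\varepsilon S(\varepsilon(x_1)x_2)=\varepsilon S(x)$, which is legitimate because you have already established that $S$ (hence $\varepsilon S$) is $\lR$-linear and because $\lambda(\varepsilon\otimes\id_H)\Delta=\id_H$ is a bialgebroid axiom. The payoff of your route is that cocommutativity is never used: the lemma holds for any Hopf algebroid over $\lR$ with $\iota(\lR)\subseteq Z(H)$, a mild strengthening of the statement, and your closing observation to this effect is accurate. A further merit is that you actually prove the multiplicativity of $\varepsilon$ from the left-ideal axiom and centrality, whereas the paper only cites this from its preceding remark without proof; since your later steps lean on that multiplicativity (both directly and through the $\lR$-linearity of $\varepsilon S$), spelling it out is the right call. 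In the paper's context nothing is lost by their extra hypothesis, since all post-Hopf algebroids there are cocommutative by definition, but your argument is the sharper one.
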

\begin{proof}
We only need to show that $S$   is $\lR$-linear and $\varepsilon S=\varepsilon$. First for any $f\in R$, $x\in H$, we have
\begin{eqnarray*}
 S(fx) &=& S(x)S(\iota(f))\ = \ S(x)\iota(f)\ =\ fS(x),
\end{eqnarray*}
which implies that $S$ is $R$-linear. Also, since the counit $\varepsilon$ becomes an algebra homomorphism, we have
\begin{eqnarray*}
\varepsilon(S(x)) &=& \varepsilon(\iota(\varepsilon(S(x))))=\varepsilon(S(x_1)x_2)=\varepsilon(S(x_1))\varepsilon(x_2)\\
&=& \varepsilon(x_1)\varepsilon(S(x_2))= \varepsilon(x_1S(x_2))= \varepsilon(\iota(\varepsilon(x)))\\
& =&\varepsilon(x),
\end{eqnarray*}
which implies that $\varepsilon S=\varepsilon$.  Hence, $H$ is a Hopf $\lR$-algebra by definition.
\end{proof}

\begin{defn}\label{defn:mod-alg}
Given a bialgebroid $H$ over $\lR$, an $\lR$-algebra $A$ is called a {\bf module algebra} over $H$ via an  action $\rightharpoonup:H\otimes A\to A$, if
\begin{eqnarray}
xy\rightharpoonup a&=&x\rightharpoonup (y\rightharpoonup a),\\
1_H\rightharpoonup a&=&a,\\
\label{eq:mod-alg-0}
fx\rightharpoonup a &=& f(x\rightharpoonup a),\\
\label{eq:mod-alg-1}
x\rightharpoonup ab &=& (x_1\rightharpoonup a)(x_2\rightharpoonup b),\\
\label{eq:mod-alg-2}
x\rightharpoonup 1_A &=& \varepsilon(x)1_A
\end{eqnarray}
for any $f\in\lR$, $x\in H$ and $a,b\in A$. When $\lR=\bk$, it recovers the notion of a module algebra over a bialgebra.
\end{defn}

 For later use, we need the following
construction of Hopf algebroids from smash product algebras.  Let $(A,\Delta_A,\varepsilon_A,S_A)$ be a cocommutative Hopf algebra such that $\lR$ is an $A$-module algebra via an action $\rightharpoonup$. Then there is an algebra structure on $\lR\otimes A$ given by
$$
fa\cdot gb = f(a_1\rightharpoonup g)a_2b,
$$
for all $fa,gb\in\lR\otimes A$. This algebra is called the {\bf smash product algebra} and denoted by $\lR\# A$.

\begin{theorem}\label{prop:smash-prod}
With the above notations,  the smash product algebra $H=\lR\# A$ has a natural Hopf algebroid structure $(H,\Delta_H,\varepsilon_H,S_H,\iota)$ over $\lR$, where the   comultiplication $\Delta_H$, the counit $\varepsilon_H$ and the antipode $S_H$ are defined as follows:
\begin{eqnarray*}
\Delta_H(fa) &=& fa_1\otimes_\lR 1_\lR a_2,\\
\varepsilon_H(fa) &=& f\varepsilon_A(a),\\
S_H(fa) &=& (S_A(a_1)\rightharpoonup f)S_A(a_2)
\end{eqnarray*}
for any $f,g\in \lR$ and $a,b\in A$, and $\iota:\lR\to H$ is the canonical inclusion of algebras. We call $H$ an {\bf action Hopf algebroid} over $\lR$.
\end{theorem}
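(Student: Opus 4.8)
The plan is to verify directly all the axioms in Definition~\ref{defi:Hopf-algebroid}, reducing each one to the Hopf algebra axioms of $A$, the module algebra axioms for $\rightharpoonup$ in Definition~\ref{defn:mod-alg} (with $\lR$ now in the role of the module algebra), and the cocommutativity of $A$. Throughout I would work in Sweedler's notation $\Delta_A(a)=a_{(1)}\otimes a_{(2)}$ and set up two elementary reductions that make the rest manageable. First, the specialisation of \eqref{eq:mod-alg-2} to the module algebra $\lR$ reads $x\rightharpoonup 1_\lR=\varepsilon_A(x)1_\lR$, from which one gets $\iota(f)\cdot gb=fgb$ and, more usefully, that the embedding $A\hookrightarrow H$, $b\mapsto 1_\lR b$, is an algebra homomorphism (products of elements of $A$ inside $H$ are just products in $A$). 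Second, in $H\otimes_\lR H$ one has $\iota(h)x\otimes_\lR y=x\otimes_\lR\iota(h)y$ for $h\in\lR$, which lets me normalise every tensor so that its second leg lies in $A$; the stated formula $\Delta_H(fa)=fa_{(1)}\otimes_\lR 1_\lR a_{(2)}$ is already in this normal form.

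I would then treat the axioms in turn. That $\iota$ is an algebra map and that $\Delta_H,\varepsilon_H$ are well defined $\lR$-module maps is immediate, and coassociativity (ii)(a) is inherited from that of $\Delta_A$ after relabelling. For multiplicativity of the coproduct (ii)(c) I would expand both $\Delta_H(fa\cdot gb)$ and the componentwise product $\Delta_H(fa)\Delta_H(gb)$ and match them using that $\Delta_A$ is an algebra map together with \eqref{eq:mod-alg-1} and \eqref{eq:mod-alg-0}; the componentwise product is legitimate precisely because $\Delta_H$ lands in the Takeuchi product, which is the content of (ii)(b). The counit axioms (iii) follow from the counit identities of $A$ and the normalisations above: for instance (iii)(b) reduces to $\varepsilon_H(xy)=\varepsilon_H(x\,\iota\varepsilon_H(y))$, both sides equalling $f(a\rightharpoonup g)\varepsilon_A(b)$ for $x=fa$, $y=gb$, and (iii)(c) uses $\sum\varepsilon_A(a_{(1)})a_{(2)}=a=\sum a_{(1)}\varepsilon_A(a_{(2)})$. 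For the antipode I would first record that $S_H$ restricts to $S_A$ on $A$ and that $S_H\iota=\iota$, and then verify the counit-antipode identities: $m_H(\id_H\otimes S_H)\Delta_H(fa)$ collapses via $\sum a_{(1)}S_A(a_{(2)})=\varepsilon_A(a)1_A$ to $\iota\varepsilon_H(fa)$, giving (c), while (b) is its mirror image using $\sum S_A(a_{(1)})a_{(2)}=\varepsilon_A(a)1_A$.

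The two steps where cocommutativity of $A$ is indispensable, and which I expect to be the main obstacle, are the Takeuchi condition (ii)(b) and the anti-multiplicativity of $S_H$. For (ii)(b), computing $\Delta_H(ga)(\iota(f)\otimes 1_H)$ and $\Delta_H(ga)(1_H\otimes\iota(f))$ and normalising both to the form with second leg in $A$ yields $\sum a_{(2)}\otimes_\lR g(a_{(1)}\rightharpoonup f)a_{(3)}$ and $\sum a_{(1)}\otimes_\lR g(a_{(2)}\rightharpoonup f)a_{(3)}$ respectively; these agree exactly after interchanging the first two Sweedler legs, which is legitimate only because $\Delta_A$ is cocommutative. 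For $S_H(fa\cdot gb)=S_H(gb)S_H(fa)$, expanding both sides forces one to commute $S_A$ past $\Delta_A$, and the needed identity $\Delta_A S_A=(S_A\otimes S_A)\Delta_A$ holds precisely for cocommutative $A$; combined with repeated use of \eqref{eq:mod-alg-1} (the action respects products in $\lR$) and the anti-homomorphism property of $S_A$, the two sides can be matched. Bijectivity of $S_H$ is then inherited from that of $S_A$ (cocommutativity gives $S_A^2=\id$), since $S_H$ is $S_A$ composed with an invertible twist by $\rightharpoonup$. Cocommutativity of $H$ likewise follows from that of $A$, so the second counit identity in (iii)(c) reduces to the first. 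Assembling these verifications yields the claimed action Hopf algebroid structure.
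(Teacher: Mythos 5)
Your proposal is correct, but it takes a genuinely different route from the paper. The paper does not verify the axioms at all: its proof is a two-step reduction, invoking \cite[Theorem~4.1]{BM}, which equips any smash product $A\# H$ with a Lu-type Hopf algebroid structure whenever $A$ is a braided commutative algebra in the Yetter--Drinfeld category ${_H}\mathscr Y\!\mathscr D^H$, and then observing that the commutative algebra $\lR$, viewed as a module algebra over the cocommutative Hopf algebra $A$ with the trivial coaction, is braided commutative in ${_A}\mathscr Y\!\mathscr D^A$. Your direct verification trades that categorical machinery for explicit Sweedler computations, and it has the merit of isolating exactly where each hypothesis enters: cocommutativity of $A$ in the Takeuchi condition (ii)(b) of Definition~\ref{defi:Hopf-algebroid} (the interchange of $a_{(1)}$ and $a_{(2)}$ after normalisation) and in the anti-multiplicativity of $S_H$ (via $\Delta_A S_A=(S_A\otimes S_A)\Delta_A$), and commutativity of $\lR$ in the balancing relation $\iota(h)x\otimes_\lR y=x\otimes_\lR \iota(h)y$ underlying your normal-form argument. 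The paper's route buys brevity and places the example in a general framework; yours buys a self-contained, elementary proof. Two points to tighten when writing it out in full: first, your normal-form comparisons implicitly use that $H=\lR\otimes A$ is free as a left $\lR$-module, so that representations with second leg in $A$ are unique -- this should be stated, since otherwise equality of normal forms proves nothing; second, antipode axiom (b) in Definition~\ref{defi:Hopf-algebroid} has $\iota\varepsilon_H S_H$ (not $\iota\varepsilon_H$) on the right-hand side, so it is not literally the mirror image of (c): both sides of (b) evaluate to $\iota(S_A(a)\rightharpoonup f)$, which requires the additional computation $\varepsilon_H(S_H(fa))=S_A(a)\rightharpoonup f$ rather than a repetition of the argument for (c) with $S_A(a_{(1)})a_{(2)}=\varepsilon_A(a)1_A$. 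Also, the multiplicativity of $\Delta_H$ in step (ii)(c) really rests on $a\rightharpoonup 1_\lR=\varepsilon_A(a)1_\lR$ (the specialisation of \eqref{eq:mod-alg-2}) rather than on \eqref{eq:mod-alg-1} and \eqref{eq:mod-alg-0} as cited; these are cosmetic slips, not gaps.
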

\begin{proof}
 According to \cite[Theorem~4.1]{BM}, given a braided commutative algebra $A$ in the Yetter-Drinfeld module category ${_H}\mathscr Y\!\mathscr D^H$ over a Hopf algebra $H$,
the smash product algebra $A\# H$ naturally possesses a Hopf algebroid structure over $A$ in the sense of Lu. Such a result is a generalization of \cite[Theorem~5.1]{Lu}.

Now under our notations, we take the commutative algebra $\lR$ as a module algebra over a given cocommutative Hopf algebra $A$ and choose the trivial coaction of $A$ on $\lR$, then $\lR$ is clearly a braided commutative algebra in ${_A}\mathscr Y\!\mathscr D^A$, and so \cite[Theorem~4.1]{BM} can be applied to give the desired statement. Note that in this special situation, both the source map and the target map are the canonical inclusion, and we just denote it by $\iota$ as previously.
\end{proof}

\subsection{Post-Hopf algebroids}

In order to introduce the key notion of post-Hopf algebroids, we first recall post-Hopf algebras introduced in \cite{LST}. The group algebra of a post-group and the universal enveloping algebra of a post-Lie algebra are two fundamental examples.
\begin{defn}[{\cite[Definition~2.1]{LST}}]\label{defi:pH}
A {\bf post-Hopf algebra} is a pair $(H,\rhd)$, where $H$ is a Hopf algebra and $\rhd:H\otimes H\to H$ is a coalgebra homomorphism satisfying the following equalities:
\begin{eqnarray}
\label{Post-2}x\rhd (y\cdot z)&=&(x_1\rhd y)\cdot(x_2\rhd z),\\
\label{Post-4}x\rhd (y\rhd z)&=&\big(x_1\cdot(x_2\rhd y)\big)\rhd z,
\end{eqnarray}
for any $x,y,z\in H$, and the left multiplication $\alpha_\rhd:H\to \End(H)$ defined by
$\alpha_{\rhd, x} y= x\rhd y$ for all $x,y\in H,$
is convolution invertible in $\Hom(H,\End(H))$. Namely, there exists unique $\beta_\rhd:H\to\End(H)$ such that
\begin{equation}\label{Post-Hopf-con}
\alpha_{\rhd,x_1}\circ\beta_{\rhd,x_2}=\beta_{\rhd,x_1}\circ\alpha_{\rhd,x_2}=\varepsilon(x)\id_H,\quad\forall x\in H.
\end{equation}

\end{defn}

Moreover, we have the following properties.

\begin{lemma}[{\cite[Lemma~2.4]{LST}}]
Let $(H,\rhd)$ be a post-Hopf algebra. For any $x,y\in H$, we have
\begin{eqnarray}
\label{Post-1}x\rhd 1_H&=&\varepsilon(x)1_H,\\
\label{Post-3}1_H\rhd x&=&x,\\
\label{Post-5}S(x\rhd y)&=&x\rhd S(y).
\end{eqnarray}
\end{lemma}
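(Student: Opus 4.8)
The plan is to prove the three identities in the order \eqref{Post-1}, \eqref{Post-3}, \eqref{Post-5}, since each later one will feed on the earlier ones. The single unifying observation I will use is that \emph{any coalgebra homomorphism into the Hopf algebra $H$ is automatically convolution invertible, its inverse being obtained by post-composing with the antipode $S$}; this is just $m_H(\id\otimes S)\Delta=\eta\varepsilon$ read in the convolution algebra. I will exploit this twice, once for a map $H\to H$ and once for $\rhd$ itself.

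For \eqref{Post-1} I would consider $f:H\to H$, $f(x)=x\rhd 1_H$. Since $1_H$ is group-like and $\rhd$ is a coalgebra homomorphism, $f$ is a coalgebra homomorphism; hence $f$ is convolution invertible in $\Hom(H,H)$ (with convolution unit $\eta\varepsilon:x\mapsto\varepsilon(x)1_H$) and its inverse is $S\circ f$. On the other hand, putting $y=z=1_H$ in \eqref{Post-2} gives $f(x)=(x_1\rhd 1_H)(x_2\rhd 1_H)$, that is, $f*f=f$. Convolving this idempotency relation on the right with $f^{-1}=S\circ f$ collapses it to $f=\eta\varepsilon$, which is exactly \eqref{Post-1}.

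For \eqref{Post-3} I would substitute $y=1_H$ in \eqref{Post-4} and apply \eqref{Post-1}: the factor $x_1\cdot(x_2\rhd 1_H)=x_1\varepsilon(x_2)1_H=x$ collapses, yielding $x\rhd(1_H\rhd z)=x\rhd z$ for all $x,z$. Taking $x=1_H$ shows that $\psi:=\alpha_{\rhd,1_H}$ satisfies $\psi\circ\psi=\psi$. But \eqref{Post-Hopf-con} evaluated at $x=1_H$ reads $\alpha_{\rhd,1_H}\circ\beta_{\rhd,1_H}=\beta_{\rhd,1_H}\circ\alpha_{\rhd,1_H}=\id_H$, so $\psi$ is invertible; an invertible idempotent equals the identity, giving $1_H\rhd z=z$.

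For \eqref{Post-5}, which I expect to be the main obstacle, I would work in the convolution algebra $\Hom(H\otimes H,H)$, where $H\otimes H$ carries the tensor-product coalgebra structure and the unit is $x\otimes y\mapsto\varepsilon(x)\varepsilon(y)1_H$. The map $T=\rhd$ is a coalgebra homomorphism, hence convolution invertible with two-sided inverse $S\circ T$, so that $(S\circ T)(x\otimes y)=S(x\rhd y)$. I would then check that $B(x\otimes y):=x\rhd S(y)$ is also a right convolution inverse of $T$: using \eqref{Post-2} and then \eqref{Post-1},
\[
(T*B)(x\otimes y)=(x_1\rhd y_1)(x_2\rhd S(y_2))=x\rhd\big(y_1S(y_2)\big)=\varepsilon(y)\,(x\rhd 1_H)=\varepsilon(x)\varepsilon(y)1_H ,
\]
which is the convolution unit. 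Since $T$ has a two-sided inverse, right inverses are unique, so $B=S\circ T$, i.e. $x\rhd S(y)=S(x\rhd y)$. The delicate points here are keeping the tensor-product coalgebra structure straight and invoking only the one-sided identities already in hand, so I would state the two-sided invertibility of $T$ explicitly before concluding $B=S\circ T$.
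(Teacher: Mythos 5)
The paper itself contains no proof of this lemma: it is quoted verbatim from \cite[Lemma~2.4]{LST}, with the citation serving as the proof. So the only meaningful comparison is with the argument in that reference, and your proof follows the same convolution-invertibility strategy that underlies it; in any case your write-up is correct and self-contained. All three steps check out. For \eqref{Post-1}, the map $f(x)=x\rhd 1_H$ is indeed a coalgebra endomorphism of $H$ (since $1_H$ is group-like and $\rhd$ is a coalgebra map), hence convolution invertible with inverse $S\circ f$, and cancelling $f$ in the idempotency relation $f\ast f=f$ supplied by \eqref{Post-2} yields $f=\eta\varepsilon$. For \eqref{Post-3}, setting $y=1_H$ in \eqref{Post-4} and using \eqref{Post-1} gives $x\rhd(1_H\rhd z)=x\rhd z$, so $\alpha_{\rhd,1_H}$ is idempotent, while \eqref{Post-Hopf-con} evaluated at $x=1_H$ (where $\Delta(1_H)=1_H\otimes_{} 1_H$ and $\varepsilon(1_H)=1$) shows $\alpha_{\rhd,1_H}$ is invertible with inverse $\beta_{\rhd,1_H}$; an invertible idempotent is the identity. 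For \eqref{Post-5}, your argument in the convolution algebra $\Hom(H\otimes H,H)$ is the standard and correct one: $S\circ\rhd$ is a two-sided convolution inverse of the coalgebra map $\rhd$, the computation $(x_1\rhd y_1)(x_2\rhd S(y_2))=x\rhd\bigl(y_1S(y_2)\bigr)=\varepsilon(x)\varepsilon(y)1_H$ shows that $x\otimes y\mapsto x\rhd S(y)$ is a right convolution inverse, and uniqueness of one-sided inverses in the presence of a two-sided inverse gives \eqref{Post-5}. The one point you rightly flag, and should keep explicit in a final write-up, is that \eqref{Post-2} is applied to the pair $(y_1,S(y_2))$ summed over the Sweedler decomposition of $y$, which is legitimate precisely because \eqref{Post-2} holds for all arguments and is linear in each slot.
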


\begin{theorem}[{\cite[Theorem~2.5]{LST}}]\label{thm:subHopf}
Let $(H,\rhd)$ be a cocommutative post-Hopf algebra. Then
$$H_\rhd\coloneqq(H,*_\rhd,1_H,\Delta,\varepsilon,S_\rhd)$$
is a Hopf algebra, which is called the {\bf sub-adjacent Hopf algebra}, where for all $x,y\in H$,
\begin{eqnarray}
\label{post-rbb-1}x *_\rhd y&\coloneqq&x_1\cdot (x_2\rhd y),\\
\label{post-rbb-2}S_\rhd(x)&\coloneqq&\beta_{\rhd,x_1}(S(x_2)),
\end{eqnarray}
and $*_\rhd$ is called the {\bf Grossman-Larson product}.

Furthermore, $H$ is a left $H_\rhd$-module bialgebra via the action $\rhd$, so $\beta_{\rhd,x}=\alpha_{\rhd,S_\rhd(x)}$.

\end{theorem}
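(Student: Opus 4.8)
The plan is to verify, in order, that $(H,*_\rhd,1_H)$ is an associative unital algebra, that it forms a bialgebra with the \emph{unchanged} cocommutative coalgebra structure $(\Delta,\varepsilon)$, and finally that $S_\rhd$ is an antipode; the module-bialgebra assertion then falls out at the end. The first thing I would record is the reformulation of the axioms as action identities: equation \eqref{Post-4} says precisely that $(x*_\rhd y)\rhd z=x\rhd(y\rhd z)$, i.e.\ that $\alpha_\rhd\colon H\to\End(H)$ is multiplicative, $\alpha_{\rhd,x*_\rhd y}=\alpha_{\rhd,x}\circ\alpha_{\rhd,y}$, while \eqref{Post-3} gives $\alpha_{\rhd,1_H}=\id_H$. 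For associativity of $*_\rhd$ itself I would expand $(x*_\rhd y)*_\rhd z$, compute $\Delta(x*_\rhd y)$ using that both $\rhd$ and $\cdot$ are coalgebra homomorphisms, then apply \eqref{Post-4} to collapse the inner $\rhd$ and \eqref{Post-2} to recombine, landing on $x*_\rhd(y*_\rhd z)$; cocommutativity is exactly what permits the permutation of the Sweedler legs of $x$ that makes the two sides agree. The unit axiom is immediate: the right unit from \eqref{Post-1} (via $x_2\rhd 1_H=\varepsilon(x_2)1_H$) and the left unit from \eqref{Post-3}.

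Next I would check that $(H,*_\rhd,1_H,\Delta,\varepsilon)$ is a bialgebra. Multiplicativity of $\varepsilon$ is the counit half of ``$\rhd$ is a coalgebra homomorphism'', yielding $\varepsilon(x\rhd y)=\varepsilon(x)\varepsilon(y)$ and hence $\varepsilon(x*_\rhd y)=\varepsilon(x)\varepsilon(y)$. For the coproduct I would verify $\Delta(x*_\rhd y)=(x_1*_\rhd y_1)\otimes(x_2*_\rhd y_2)$ from the same computation of $\Delta(x*_\rhd y)$; the only nontrivial point is again swapping two adjacent Sweedler legs of $x$, which cocommutativity supplies. Together with $\Delta(1_H)=1_H\otimes 1_H$ this makes $H_\rhd$ a cocommutative bialgebra.

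The heart of the argument, and what I expect to be the main obstacle, is the antipode. I would first verify the right identity $x_1*_\rhd S_\rhd(x_2)=\varepsilon(x)1_H$: unfolding \eqref{post-rbb-2} and the definition of $*_\rhd$ turns the left side into $x_1\cdot\bigl(\alpha_{\rhd,x_2}\circ\beta_{\rhd,x_3}\bigr)\bigl(S(x_4)\bigr)$, and the clean observation is that, by coassociativity, the two adjacent legs carrying $\alpha_\rhd$ and $\beta_\rhd$ are the coproduct of a single element, so the convolution relation \eqref{Post-Hopf-con} collapses them to a counit and the whole expression reduces to $x_1\cdot S(x_2)=\varepsilon(x)1_H$ for the original antipode (this half needs only coassociativity). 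The left identity $S_\rhd(x_1)*_\rhd x_2=\varepsilon(x)1_H$ is harder, since expanding it requires the coproduct of $S_\rhd$. I would therefore first establish the coproduct formula $\Delta\bigl(\beta_{\rhd,a}(b)\bigr)=\beta_{\rhd,a_1}(b_1)\otimes\beta_{\rhd,a_2}(b_2)$ by showing that both sides are convolution inverses in $\Hom(H\otimes H,H\otimes H)$ of the map $a\otimes b\mapsto(a_1\rhd b_1)\otimes(a_2\rhd b_2)$ and invoking uniqueness of inverses; this is precisely where cocommutativity is indispensable, to align the two convolution structures. From this I obtain $\Delta S_\rhd(x)=S_\rhd(x_1)\otimes S_\rhd(x_2)$ and can run the analogous collapse using the other half $\beta_{\rhd,x_1}\circ\alpha_{\rhd,x_2}=\varepsilon(x)\id_H$ of \eqref{Post-Hopf-con}. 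That $S_\rhd$ is then an algebra anti-homomorphism is the standard formal consequence of being a convolution antipode of the bialgebra $H_\rhd$.

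For the ``furthermore'' part I would observe that, since $\alpha_\rhd$ is an algebra homomorphism $H_\rhd\to\End(H)$, one has $\alpha_{\rhd,x_1}\circ\alpha_{\rhd,S_\rhd(x_2)}=\alpha_{\rhd,\,x_1*_\rhd S_\rhd(x_2)}=\varepsilon(x)\id_H$; thus $\alpha_\rhd\circ S_\rhd$ is a right convolution inverse of $\alpha_\rhd$, and by uniqueness of the inverse $\beta_\rhd$ in \eqref{Post-Hopf-con} it equals it, giving $\beta_{\rhd,x}=\alpha_{\rhd,S_\rhd(x)}$. The module-bialgebra statement is then a repackaging of the axioms already used: \eqref{Post-4} and \eqref{Post-3} say $\rhd$ is a left action of the algebra $H_\rhd$ on $H$, \eqref{Post-2} and \eqref{Post-1} say this action is by algebra morphisms (module algebra), and the coalgebra-homomorphism property of $\rhd$ makes it a module coalgebra, so $H$ is a left $H_\rhd$-module bialgebra via $\rhd$.
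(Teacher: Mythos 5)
Your architecture is the right one, and it matches the proof in the cited source \cite{LST} (note the present paper only quotes this theorem; the proof lives in [LST], where the needed properties of $\beta_\rhd$ are isolated as a preliminary lemma): associativity and unitality of $*_\rhd$, the bialgebra compatibility via cocommutativity, the right antipode identity $x_1*_\rhd S_\rhd(x_2)=\varepsilon(x)1_H$ by collapsing $\alpha_{\rhd,x_2}\circ\beta_{\rhd,x_3}$ through \eqref{Post-Hopf-con}, and the identification $\beta_{\rhd,x}=\alpha_{\rhd,S_\rhd(x)}$ by uniqueness of the convolution inverse of $\alpha_\rhd$ in $\Hom(H,\End(H))$ are all correct as you state them. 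There are, however, two genuine gaps in the antipode part.

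The serious one is your key lemma, the coproduct formula $\Delta(\beta_{\rhd,a}(b))=\beta_{\rhd,a_1}(b_1)\otimes\beta_{\rhd,a_2}(b_2)$. You propose to prove it by showing that both candidate maps are convolution inverses, in $\Hom(H\otimes H,H\otimes H)$, of $T\colon a\otimes b\mapsto(a_1\rhd b_1)\otimes(a_2\rhd b_2)$. But the convolution product on $\Hom(H\otimes H,H\otimes H)$ multiplies values pointwise in the algebra $H\otimes H$, whereas \eqref{Post-Hopf-con} is a statement about \emph{composition} in $\End(H)$; the two structures are unrelated, and the claimed inverse relation is simply false. Concretely, take $H=\uu(\g)$ for a post-Lie algebra $\g$, so that $\beta_{\rhd,x}(y)=-x\rhd y$ for primitive $x$; for primitives $x,y',y''$ and $y=y'y''$ one computes
\begin{equation*}
\sum (x_1\rhd y_1)\cdot\beta_{\rhd,x_2}(y_2)\;=\;[\,x\rhd y',\,y''\,]+[\,x\rhd y'',\,y'\,]
\end{equation*}
(commutators in $\uu(\g)$), which is nonzero in general, so convolving $T$ against the $\beta$-maps does not give the convolution unit. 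The repair keeps composition in the picture: observe that any $u\in\Hom\bigl(H,\Hom(H,H\otimes H)\bigr)$ satisfying $\sum u_{x_1}\circ\alpha_{\rhd,x_2}=\varepsilon(x)\,\Delta$ must equal $x\mapsto\Delta\circ\beta_{\rhd,x}$ (compose on the right with $\beta_\rhd$ and use coassociativity and \eqref{Post-Hopf-con}); then check that $x\mapsto(\beta_{\rhd,x_1}\otimes\beta_{\rhd,x_2})\circ\Delta$ satisfies this equation, using the coalgebra-homomorphism property of $\rhd$, \eqref{Post-Hopf-con}, and cocommutativity to interleave the Sweedler legs. This is where cocommutativity genuinely enters, as you anticipated, but in this Hom-space, not the one you named.

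The smaller gap: even granted $\Delta S_\rhd(x)=S_\rhd(x_1)\otimes S_\rhd(x_2)$, the left identity $S_\rhd(x_1)*_\rhd x_2=\varepsilon(x)1_H$ does not follow by a collapse ``analogous'' to the right one, because expanding gives $S_\rhd(x_1)\cdot\bigl(S_\rhd(x_2)\rhd x_3\bigr)$, in which no composition $\beta_{\rhd,\cdot}\circ\alpha_{\rhd,\cdot}$ ever appears, so \eqref{Post-Hopf-con} cannot be applied directly. What works: first establish $\beta_{\rhd,x}=\alpha_{\rhd,S_\rhd(x)}$ --- your own final-paragraph argument does this using only the right identity, so it should be moved before this step --- together with the multiplicativity $\beta_{\rhd,x_1}(a)\beta_{\rhd,x_2}(b)=\beta_{\rhd,x}(ab)$, which follows from \eqref{Post-2} and the coproduct formula above. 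Then
\begin{equation*}
S_\rhd(x_1)*_\rhd x_2=S_\rhd(x_1)\cdot\beta_{\rhd,x_2}(x_3)=\beta_{\rhd,x_1}(S(x_2))\cdot\beta_{\rhd,x_3}(x_4)=\beta_{\rhd,x_1}\bigl(S(x_2)x_3\bigr)=\beta_{\rhd,x}(1_H)=\varepsilon(x)1_H,
\end{equation*}
after a cocommutative rearrangement of the legs. With these two repairs your proof goes through and coincides with the standard one.
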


Now we are in position to define post-Hopf algebroids generalizing cocommutative post-Hopf algebras. First we introduce a weaken version, whose counterpart in the original Hopf algebra case was given
by getting rid of the  restrictive  condition that the left multiplication $\alpha_\rhd$ is convolution invertible.
\begin{defn}\label{defi:weak-post-Hopf}
A {\bf weak post-Hopf algebroid} over $\lR$
consists of a {\it cocommutative} Hopf $\lR$-algebra $(H,\iota)$,
and  a $\bk$-linear map $\rhd:H\otimes H\to H$ such that
\begin{eqnarray}
\label{eq:post-HAD-1}
\Delta(x\rhd y) &=& (x_1\rhd y_1)\otimes_\lR (x_2\rhd y_2),\\
\label{eq:post-HAD-1'}
x\rhd 1_H &=& \iota(\varepsilon(x)),\\
\label{eq:post-HAD-1''}
1_H\rhd x &=& x,\\
\label{eq:post-HAD-2'}
\iota(\varepsilon(x\rhd y)) &=& x\rhd \iota(\varepsilon(y)),\\
\label{eq:post-HAD-2}
fx\rhd y &=& f(x\rhd y),\\
\label{eq:post-HAD-3}
x\rhd yz &=& (x_1\rhd y)(x_2\rhd z),\\
\label{eq:post-HAD-4}
x\rhd(y\rhd z) &=& x_1(x_2\rhd y)\rhd z
\end{eqnarray}
for any $f\in\lR$ and $x,y,z\in H$.
\end{defn}

\begin{lemma}
Let $(H,\iota,\rhd)$ be a weak post-Hopf algebroid over $\lR$. For any $f\in\lR$ and $x\in H$, we have
\begin{eqnarray}
\label{eq:post-HAD-5'}
x\rhd \iota(f) &=& \iota(\varepsilon(x\rhd \iota(f))),\\
\label{eq:post-HAD-5}
\iota(f) \rhd x &=& fx.
\end{eqnarray}
\end{lemma}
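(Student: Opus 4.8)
The plan is to obtain both identities as immediate specializations of the axioms in Definition~\ref{defi:weak-post-Hopf}, once the auxiliary identity $\varepsilon\circ\iota=\id_\lR$ is recorded. To get the latter, I would use the convention $\iota(f)x=fx$ at $x=1_H$ to write $\iota(f)=f1_H$, and then compute $\varepsilon(\iota(f))=\varepsilon(f1_H)=f\varepsilon(1_H)=f$, using that $\varepsilon$ is $\lR$-linear (all structural maps of a Hopf $\lR$-algebra being $\lR$-linear, as noted after Definition~\ref{defn:Hopf-R-algebra}) together with the counit axiom $\varepsilon(1_H)=1_\lR$. In particular $\iota(\varepsilon(\iota(f)))=\iota(f)$.

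For \eqref{eq:post-HAD-5}, I would specialize the scalar-compatibility axiom \eqref{eq:post-HAD-2}, namely $fx\rhd y=f(x\rhd y)$, to $x=1_H$. Since $f1_H=\iota(f)$, the left-hand side becomes $\iota(f)\rhd y$, while the right-hand side is $f(1_H\rhd y)=fy$ by the unit axiom \eqref{eq:post-HAD-1''}. Renaming $y$ to $x$ this yields $\iota(f)\rhd x=fx$ directly, with no further computation.

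For \eqref{eq:post-HAD-5'}, I would specialize the axiom \eqref{eq:post-HAD-2'}, namely $\iota(\varepsilon(x\rhd y))=x\rhd\iota(\varepsilon(y))$, to $y=\iota(f)$. The left-hand side is then $\iota(\varepsilon(x\rhd\iota(f)))$, and the right-hand side is $x\rhd\iota(\varepsilon(\iota(f)))$; applying the auxiliary identity $\iota(\varepsilon(\iota(f)))=\iota(f)$ collapses the right-hand side to $x\rhd\iota(f)$. Reading the resulting equality from right to left gives exactly \eqref{eq:post-HAD-5'}.

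I expect essentially no obstacle here: neither cocommutativity, the coproduct compatibility \eqref{eq:post-HAD-1}, nor the antipode enters. The only point deserving care is the notational identification $\iota(f)=f1_H$ coming from the convention $\iota(f)x=fx$, which is what lets me read the scalar axioms \eqref{eq:post-HAD-2} and \eqref{eq:post-HAD-2'} as statements about $\iota(f)$. I would therefore establish $\varepsilon\iota=\id_\lR$ explicitly before invoking it, since it silently underlies the simplification in \eqref{eq:post-HAD-5'}.
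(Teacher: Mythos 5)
Your proposal is correct and coincides with the paper's own proof: the paper likewise obtains \eqref{eq:post-HAD-5'} by rewriting $\iota(f)=\iota(\varepsilon(\iota(f)))$ and applying \eqref{eq:post-HAD-2'}, and obtains \eqref{eq:post-HAD-5} by reading $\iota(f)$ as $f1_H$ and combining \eqref{eq:post-HAD-2} with \eqref{eq:post-HAD-1''}. The only difference is presentational: you state and verify the identity $\varepsilon\iota=\id_\lR$ explicitly, whereas the paper uses it silently.
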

\begin{proof}
First we see that
\begin{eqnarray*}
x\rhd \iota(f) &=& x\rhd \iota(\varepsilon(\iota(f))) \ \stackrel{\eqref{eq:post-HAD-2'}}{=}\ \iota(\varepsilon(x\rhd \iota(f))).
\end{eqnarray*}
Namely, the image of $\iota$ is stable under the left multiplication of $\rhd$.
Also, we have
\begin{eqnarray*}
\iota(f) \rhd x
&\stackrel{\eqref{eq:post-HAD-2}}{=}& f(1_H\rhd x)\ \stackrel{\eqref{eq:post-HAD-1''}}{=}\ fx,
\end{eqnarray*}
which finishes the proof.
\end{proof}

\begin{theorem}\label{thm:GL-bialgebroid}
Given a weak post-Hopf algebroid $(H,\iota,\rhd)$ over $\lR$, there exists an associative product $*_\rhd$, called the {\bf Grossman-Larson product}, on $H$ defined by
\begin{eqnarray}\label{eq:GL-product}
x*_\rhd y &\coloneqq& x_1(x_2\rhd y),\quad\forall x,y\in H,
\end{eqnarray}
 such that the tuple $H_\rhd=(H,*_\rhd,\Delta,\varepsilon,\iota)$ forms a bialgebroid over $R$, which is called the {\bf Grossman-Larson bialgebroid} over $\lR$.

  Moreover, $H$ is a module algebra over $H_\rhd$ with respect to the product $\rhd$  in the sense of Definition \ref{defn:mod-alg}.
\end{theorem}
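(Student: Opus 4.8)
The plan is to observe that $H_\rhd$ shares the coproduct $\Delta$, the counit $\varepsilon$, and the source/target map $\iota$ with the underlying cocommutative Hopf $\lR$-algebra, so that only the axioms of Definition~\ref{defi:Hopf-algebroid} that involve the \emph{product} need to be re-examined for $*_\rhd$. First I would record the structural preliminaries. Since $\Delta(\iota(f))=\iota(f)\otimes_\lR 1_H$, \eqref{eq:post-HAD-1''} gives $\iota(f)*_\rhd x=\iota(f)x=fx$, so the left $\lR$-module structure induced by $\iota$ through $*_\rhd$ is exactly the original one; consequently $H\otimes_\lR H$, the $\lR$-linearity of $\Delta$ and $\varepsilon$, coassociativity (Definition~\ref{defi:Hopf-algebroid}(ii)(a)) and the counit identities (iii)(c) are inherited unchanged. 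The identity $\iota(f)\otimes_\lR 1_H=1_H\otimes_\lR\iota(f)$, valid over a commutative base with source $=$ target $=\iota$, makes (ii)(b) automatic. A short computation with $\Delta(\iota(f))$, \eqref{eq:post-HAD-1''} and \eqref{eq:post-HAD-1'} together with the counit axiom shows that $\iota$ is an algebra homomorphism into $(H,*_\rhd)$ and that $1_H$ is a two-sided unit for $*_\rhd$.

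The core of the proof is the associativity of $*_\rhd$, which I expect to be the main obstacle. I would expand both sides independently. Using that $\Delta$ is multiplicative for the original product and that $\rhd$ is a coalgebra map \eqref{eq:post-HAD-1}, one finds $\Delta(x*_\rhd y)=x_1(x_3\rhd y_1)\otimes_\lR x_2(x_4\rhd y_2)$, where the four legs come from the iterated coproduct of $x$; feeding this into the definition \eqref{eq:GL-product} yields $(x*_\rhd y)*_\rhd z=x_1(x_3\rhd y_1)\big(x_2(x_4\rhd y_2)\rhd z\big)$. On the other side, expanding $x_2\rhd\big(y_1(y_2\rhd z)\big)$ first by \eqref{eq:post-HAD-3} and then by the post-type identity \eqref{eq:post-HAD-4} gives $x*_\rhd(y*_\rhd z)=x_1(x_2\rhd y_1)\big(x_3(x_4\rhd y_2)\rhd z\big)$. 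The two expressions differ only by the transposition of the two middle legs of the threefold coproduct of $x$, so they coincide by cocommutativity. Keeping the Sweedler indices straight and pinpointing exactly where cocommutativity enters is the delicate part.

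Next I would verify the remaining product-dependent axioms. Comparing the expansion of $\Delta(x*_\rhd y)$ above with the componentwise product $\Delta(x)*_\rhd\Delta(y)$ in $H\otimes_\lR H$ yields equality after the same transposition of middle legs, establishing that $\Delta$ is an algebra homomorphism for $*_\rhd$ (ii)(c), with $\Delta(1_H)=1_H\otimes_\lR 1_H$ inherited. For the counit condition (iii)(b) I would show that $\varepsilon(x*_\rhd y)$ depends on $y$ only through $\varepsilon(y)$: since $\varepsilon$ is multiplicative for the original product, $\varepsilon(x*_\rhd y)=\varepsilon(x_1)\varepsilon(x_2\rhd y)$, and applying $\varepsilon$ to \eqref{eq:post-HAD-2'} gives $\varepsilon(x_2\rhd y)=\varepsilon(x_2\rhd\iota(\varepsilon(y)))$; hence $\varepsilon(x*_\rhd y)=\varepsilon(x*_\rhd\iota(\varepsilon(y)))$, which is precisely $\varepsilon\big(x*_\rhd(y-\iota\varepsilon(y))\big)=0$. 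Together with the inherited axioms this completes the verification that $H_\rhd$ is a bialgebroid over $\lR$.

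Finally, the \emph{moreover} part requires almost no new work: the action is $\rhd$, acting from $H_\rhd$ on the $\lR$-algebra $H$. The five conditions of Definition~\ref{defn:mod-alg} translate directly into the defining axioms, namely the associativity condition $(x*_\rhd y)\rhd a=x\rhd(y\rhd a)$ is exactly \eqref{eq:post-HAD-4}, the unit condition is \eqref{eq:post-HAD-1''}, the $\lR$-linearity \eqref{eq:mod-alg-0} is \eqref{eq:post-HAD-2}, the product rule \eqref{eq:mod-alg-1} is \eqref{eq:post-HAD-3}, and \eqref{eq:mod-alg-2} is \eqref{eq:post-HAD-1'} (with $\varepsilon(x)1_H=\iota(\varepsilon(x))$). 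Thus $H$ is a module algebra over $H_\rhd$ via $\rhd$.
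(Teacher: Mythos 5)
Your overall route is the paper's own: the unit and counit checks, the associativity computation (swap the two middle legs of the iterated coproduct of $x$ by cocommutativity, then use \eqref{eq:post-HAD-3} and \eqref{eq:post-HAD-4}), the identity $\varepsilon(x*_\rhd y)=\varepsilon(x*_\rhd\iota(\varepsilon(y)))$ via \eqref{eq:post-HAD-2'}, and the term-by-term matching of the module-algebra axioms all agree with the paper. However, there is one genuine gap: your claim that condition (ii)(b) of Definition~\ref{defi:Hopf-algebroid} is ``automatic'' from $\iota(f)\otimes_\lR 1_H=1_H\otimes_\lR\iota(f)$ is not a valid argument. The product appearing in (ii)(b) is computed componentwise on representatives in $H\otimes H$, where $\iota(f)\otimes 1_H-1_H\otimes\iota(f)$ is a \emph{nonzero} element; the componentwise product of classes in $H\otimes_\lR H$ is not well defined in general, and that is exactly why (ii)(b) is an axiom at all -- your reasoning would render it vacuous for an arbitrary product. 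What makes (ii)(b) trivial for the \emph{original} product is that $\iota(\lR)\subseteq Z(H)$, so right multiplication by $\iota(f)\otimes 1_H-1_H\otimes\iota(f)$ genuinely factors through the quotient $H\otimes_\lR H$. This fails for $*_\rhd$: one has $\iota(f)*_\rhd x=fx$, but
$$x*_\rhd\iota(f)\;=\;x_1\bigl(x_2\rhd\iota(f)\bigr)$$
is in general different from $fx$, because $x\rhd\iota(f)$ encodes the nontrivial anchor-type action of $x$ on $f$; indeed, the non-centrality of $\iota(\lR)$ in $H_\rhd$ is the very reason the theorem produces a bialgebroid rather than an $\lR$-bialgebra. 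So (ii)(b) must actually be proved. As in the paper, expand
$$(x_1*_\rhd\iota(f))\otimes_\lR x_2-x_1\otimes_\lR(x_2*_\rhd\iota(f))\;=\;x_1(x_2\rhd\iota(f))\otimes_\lR x_3-x_1\otimes_\lR x_2(x_3\rhd\iota(f)),$$
and then use \eqref{eq:post-HAD-5'} -- which says $x\rhd\iota(f)\in\iota(\lR)$, hence central for the original product and slidable across $\otimes_\lR$ -- together with cocommutativity of $\Delta$ to see that the two terms coincide.

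A smaller omission: you never verify that the formula \eqref{eq:GL-product} itself is well defined, i.e.\ that the map $u\otimes_\lR v\mapsto u(v\rhd y)$ factors through the tensor product over $\lR$. This requires $fx_1(x_2\rhd y)=x_1\bigl((fx_2)\rhd y\bigr)$, which follows from $\iota(\lR)\subseteq Z(H)$ and \eqref{eq:post-HAD-2}, and it is the first thing the paper checks. Apart from these two points, your proposal is correct and follows the same path as the paper.
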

\begin{proof}
For any $f\in R$ and $x,y\in H$, since $\Delta$ is $\lR$-linear and $\iota(\lR)\subseteq Z(H)$, we have
\begin{eqnarray*}
fx_1(x_2\rhd y)=x_1(f(x_2\rhd y))
\stackrel{\eqref{eq:post-HAD-2}}{=} x_1(fx_2\rhd y),
\end{eqnarray*}
which implies that the product $*_\rhd$ is well-defined.

By Eqs.~\eqref{eq:post-HAD-1'} and \eqref{eq:post-HAD-1''}, it is clear that $1_H*_\rhd x =x*_\rhd 1_H=x$.
Also, Eq.~\eqref{eq:post-HAD-1} and the cocommutativity of $H$ implies that
\begin{equation}\label{eq:111}
\Delta(x*_\rhd y)=(x_1*_\rhd y_1)\otimes_\lR (x_2*_\rhd y_2).
\end{equation}
Hence, we have
\begin{eqnarray*}
(x*_\rhd y)*_\rhd z &\stackrel{\eqref{eq:GL-product}}{=}& (x_1*_\rhd y_1)((x_2*_\rhd y_2)\rhd z)\\
&\stackrel{\eqref{eq:post-HAD-4},\eqref{eq:GL-product}}{=}&
x_1(x_2\rhd y_1)(x_3\rhd (y_2\rhd z))\\
&\stackrel{\eqref{eq:post-HAD-3}}{=}&
x_1(x_2\rhd y_1(y_2\rhd z))\\
&\stackrel{\eqref{eq:GL-product}}{=}&  x*_\rhd(y*_\rhd z).
\end{eqnarray*}
Thus, $(H, *_\rhd)$ is an associative algebra also with the unit $1_H$.

By Eq.~\eqref{eq:post-HAD-1''},  $\iota$ is an algebra homomorphism from $R$ to the associative algebra $(H,*_\rhd)$:
$$
\iota(fg)=\iota(f)\iota(g)=\iota(f)*_\rhd\iota(g),\quad \forall f,g\in \lR.
$$
Thus $H_\rhd$ has the same $\lR$-module structure as $H$:
$$\iota(f)*_\rhd x =\iota(f)(1_H\rhd x) \stackrel{\eqref{eq:post-HAD-1''}}{=} \iota(f)x = fx.$$

Also, we have
\begin{eqnarray*}
  (x_1*_\rhd\iota(f))\otimes_\lR x_2 - x_1\otimes_\lR (x_2*_\rhd\iota(f))
&\stackrel{\eqref{eq:GL-product}}{=}& x_1(x_2\rhd\iota(f))\otimes_\lR x_3 - x_1\otimes_\lR x_2(x_3\rhd\iota(f))\\
&=& (x_1\rhd\iota(f))x_2\otimes_\lR x_3 - x_1\otimes_\lR (x_2\rhd\iota(f))x_3\\
&=& 0.
\end{eqnarray*}
Combined with \eqref{eq:111}, we see that (ii) in Definition \ref{defi:Hopf-algebroid} holds for the tuple $(H,*_\rhd,\Delta,\iota)$.

Finally, we have $\iota(\varepsilon(x_1))*_\rhd x_2 \stackrel{\eqref{eq:GL-product}}{=} \iota(\varepsilon(x_1))(1\rhd x_2) \stackrel{\eqref{eq:post-HAD-1''}}{=} \iota(\varepsilon(x_1))x_2=x$, and
\begin{eqnarray*}
\varepsilon(x*_\rhd \iota(\varepsilon(y)))
&\stackrel{\eqref{eq:GL-product}}{=}&
 \varepsilon(x_1(x_2\rhd\iota(\varepsilon(y))))
\stackrel{\eqref{eq:post-HAD-2'}}{=} \varepsilon(x_1\iota(\varepsilon(x_2\rhd y)))
= \varepsilon(x_1)\varepsilon(\iota(\varepsilon(x_2\rhd y)))\\
&=& \varepsilon(x_1)\varepsilon(x_2\rhd y) = \varepsilon(x_1(x_2\rhd y))
\stackrel{\eqref{eq:GL-product}}{=}
\varepsilon(x*_\rhd y),
\end{eqnarray*}
which imply that (iii) in Definition \ref{defi:Hopf-algebroid} holds.
Hence, we have confirmed the desired bialgebroid structure $H_\rhd$ over $\lR$.

It is straightforward to see that $H$ is clearly a module algebra over $H_\rhd$ with respect to the product $\rhd$ by Definition~\ref{defn:mod-alg}.
\end{proof}

\begin{coro}\label{coro:left-action}
For a weak post-Hopf algebroid $(H,\iota,\rhd)$ over $\lR$, there is a left $H_\rhd$-module algebra action on $\lR$ given by:
\begin{eqnarray}\label{eq:left-action}
x(f) &\coloneqq& \varepsilon(x\rhd \iota(f)),\quad \forall f\in \lR,\,x\in H.
\end{eqnarray}
\end{coro}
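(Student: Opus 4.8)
The plan is to verify directly that $\lR$, regarded as an $\lR$-algebra over itself, is a left module algebra over the Grossman-Larson bialgebroid $H_\rhd$ in the sense of Definition~\ref{defn:mod-alg}, with the action $\rightharpoonup$ given by $x\rightharpoonup f = x(f) = \varepsilon(x\rhd\iota(f))$ as in \eqref{eq:left-action}. The single observation that makes every axiom fall out is the identity
\[
x\rhd\iota(f) = \iota(x(f)), \qquad \forall\, f\in\lR,\ x\in H,
\]
which is precisely \eqref{eq:post-HAD-5'} rewritten through the definition \eqref{eq:left-action}: it says the $\rhd$-action preserves the central subalgebra $\iota(\lR)\cong\lR$. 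Thus the proposed action is nothing but the restriction to $\iota(\lR)$ of the module-algebra action of $H_\rhd$ on $H$ already established in Theorem~\ref{thm:GL-bialgebroid}, transported back along the isomorphism $\varepsilon|_{\iota(\lR)}$, using $\varepsilon\iota=\id_\lR$ (which follows from $\varepsilon(1_H)=1_\lR$ and the $\lR$-linearity of $\varepsilon$).

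First I would dispatch the three elementary axioms. Unitality follows from $1_H\rhd\iota(f)=\iota(f)$ (Eq.~\eqref{eq:post-HAD-1''}) and $\varepsilon\iota=\id_\lR$, giving $1_H(f)=f$. The $\lR$-linearity \eqref{eq:mod-alg-0}, namely $(gx)(f)=g\,x(f)$, follows from \eqref{eq:post-HAD-2} together with the $\lR$-linearity of $\varepsilon$. For the normalization \eqref{eq:mod-alg-2} I compute $x(1_\lR)=\varepsilon(x\rhd 1_H)=\varepsilon(\iota(\varepsilon(x)))=\varepsilon(x)$, using $\iota(1_\lR)=1_H$ and \eqref{eq:post-HAD-1'}.

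The multiplicativity axiom \eqref{eq:mod-alg-1}, $x(fg)=(x_1(f))(x_2(g))$, is where I invoke that $H$ is a cocommutative Hopf $\lR$-algebra with $\iota(\lR)\subseteq Z(H)$, so that (by the Remark following Definition~\ref{defi:Hopf-algebroid}) $\varepsilon$ is an algebra homomorphism. Expanding $\iota(fg)=\iota(f)\iota(g)$ and applying \eqref{eq:post-HAD-3} yields $x\rhd\iota(fg)=(x_1\rhd\iota(f))(x_2\rhd\iota(g))$; applying $\varepsilon$ and using its multiplicativity then produces the claim.

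The main obstacle, and the only step that genuinely uses the Grossman-Larson product, is the associativity axiom $(x*_\rhd y)(f)=x(y(f))$. Here I would chain the preliminary identity with the compatibility \eqref{eq:post-HAD-4}: writing $x*_\rhd y=x_1(x_2\rhd y)$ via \eqref{eq:GL-product} and applying $x_1(x_2\rhd y)\rhd z = x\rhd(y\rhd z)$ at $z=\iota(f)$ gives $(x*_\rhd y)\rhd\iota(f)=x\rhd(y\rhd\iota(f))=x\rhd\iota(y(f))$; applying $\varepsilon$ then produces $x(y(f))$. Once these five verifications are assembled, $\lR$ is a left $H_\rhd$-module algebra, which is the assertion. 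I expect no deeper difficulty beyond this: the content is entirely the bookkeeping that the action descends cleanly from $H$ to $\iota(\lR)$, with \eqref{eq:post-HAD-4} supplying associativity and the central and cocommutative hypotheses supplying multiplicativity of $\varepsilon$.
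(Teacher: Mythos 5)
Your proof is correct and follows essentially the same route as the paper: the paper's own (very short) proof likewise observes that, by Theorem~\ref{thm:GL-bialgebroid}, $H$ is an $H_\rhd$-module algebra, that Eq.~\eqref{eq:post-HAD-5'} makes $\iota(\lR)$ a module subalgebra, and that $\varepsilon\iota=\id_\lR$ transports the structure to $\lR$. Your explicit axiom-by-axiom verification (unitality, $\lR$-linearity, normalization, multiplicativity via centrality of $\iota(\lR)$, and associativity via \eqref{eq:post-HAD-4}) simply spells out in detail what the paper delegates to Theorem~\ref{thm:GL-bialgebroid}.
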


\begin{proof} By Theorem~\ref{thm:GL-bialgebroid} and Eq.~\eqref{eq:post-HAD-5'}, $\iota(R)$ is a module subalgebra of $H$ over $H_\rhd$. Since
    $\varepsilon\iota=\id_\lR$, there is an induced module algebra structure on $R$ given as desired.
\end{proof}

\begin{remark} The $H_\rhd$-module algebra action on $\lR$ given in Corollary~\ref{coro:left-action} is consistent with the one given by Lu
in  \cite[Definition~2.3]{Lu}  for any bialgebroid over $R$. Thus, for the Grossman-Larson bialgebroid $H_\rhd$, the $H_\rhd$-module algebra action on $H$ given in Theorem~\ref{thm:GL-bialgebroid} can be viewed as an expansion of Lu's result.

\end{remark}

\begin{defn}\label{defi:post-Hopf}
A weak post-Hopf algebroid $(H,\iota,\rhd)$ over $\lR$ is called a {\bf post-Hopf algebroid}, if there exists an algebra anti-automorphism $\theta$ of the Grossman-Larson bialgebroid $H_\rhd$
such that
\begin{eqnarray}
\label{post-anti-coalg}
\Delta(\theta(x)) &=& \theta(x_1)\otimes_\lR\theta(x_2),\\
\label{Post-con}
x_1*_\rhd \theta(x_2)&=&\iota(\varepsilon(x)),\\
\label{Post-con'}
\theta(x_1)*_\rhd x_2&=&\iota(\varepsilon(\theta(x)))
\end{eqnarray}
for any $f\in\lR$ and $x\in H$.
\end{defn}
\begin{remark}
The uniqueness of such linear map $\theta$ is an interesting problem. In particular, when $\lR=\bk$, a post-Hopf algebroid over $\lR$ reduces to a cocommutative post-Hopf algebra, and in this case $\theta=S_\rhd$ is unique, where $S_\rhd$ is given by \eqref{post-rbb-2}.
\end{remark}

\begin{lemma}
Let $(H,\iota,\rhd,\theta)$ be a post-Hopf algebroid over $\lR$. For any $f\in\lR$ and $x, y\in H$, we have
\begin{eqnarray}
\label{post-anti-coef}
\theta(fx) &=& (\theta(x_1)\rhd \iota(f))\theta(x_2),\\
\label{anti-theta}
S(x) &=& x_1\rhd \theta(x_2),\\
\label{anti-theta'}
\theta(x) &=& \theta(x_1)\rhd S(x_2),\\
\label{anti-theta''}
\iota(\varepsilon(\theta(x))) &=& \theta(x_1)\rhd \iota(\varepsilon(x_2)),\\
\label{anti-theta''''}
xy &=& x_1*_\rhd (\theta(x_2)\rhd y),\\
\label{anti-theta'''}
x &=& x_1*_\rhd \iota(\varepsilon(\theta(x_2))).
\end{eqnarray}

\end{lemma}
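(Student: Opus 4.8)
The plan is to prove the six identities not in their displayed order but in their logical order, treating \eqref{anti-theta} as the linchpin and deriving the rest by ``substitute and collapse'' using \eqref{Post-con} and \eqref{Post-con'}. Throughout I would write $\Delta^{(2)}(x)=x_1\otimes x_2\otimes x_3$ in Sweedler notation and use freely the coassociativity and cocommutativity of $H$, the centrality $\iota(\lR)\subseteq Z(H)$, and the module-algebra axioms \eqref{eq:post-HAD-3}, \eqref{eq:post-HAD-4}, \eqref{eq:post-HAD-5} for $\rhd$. The preliminary observation I would record first is that, since $H$ is a \emph{cocommutative Hopf $\lR$-algebra}, Lemma~\ref{lem:Hopf-R-algebra} yields $\varepsilon S=\varepsilon$ and the $\lR$-linearity of $S$; hence Lu's asymmetric antipode axiom $m_H(S\otimes\id_H)\Delta=\iota\varepsilon S$ upgrades to $m_H(S\otimes\id_H)\Delta=\iota\varepsilon$, so that $S$ is a genuine left convolution inverse of $\id_H$. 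Here the convolution product $f*g=m_H(f\otimes g)\Delta$ is well defined on $\lR$-linear endomorphisms of $H$ (because $m_H$ descends through $\otimes_\lR$ by centrality of $\iota(\lR)$), is associative, and has unit $\iota\varepsilon$.

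I would start with the two self-contained identities. For \eqref{anti-theta''''}, expand the right-hand side by the definition \eqref{eq:GL-product} of $*_\rhd$ to $x_1(x_2\rhd(\theta(x_3)\rhd y))$, apply \eqref{eq:post-HAD-4} to rewrite $x_2\rhd(\theta(x_3)\rhd y)=(x_2*_\rhd\theta(x_3))\rhd y$, collapse the inner factor by \eqref{Post-con} to $\iota(\varepsilon(x_2))$, and finish with \eqref{eq:post-HAD-5} and the counit identity $x_1\iota(\varepsilon(x_2))=x$; this returns $xy$. Setting $y=1_H$ and using \eqref{eq:post-HAD-1'} then gives \eqref{anti-theta'''} at once.

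For the key identity \eqref{anti-theta}, set $T(x)\coloneqq x_1\rhd\theta(x_2)$. I would first check that $T$ is $\lR$-linear (from the $\lR$-linearity of $\Delta$ and \eqref{eq:post-HAD-2}), and then compute $m_H(\id_H\otimes T)\Delta(x)=x_1(x_2\rhd\theta(x_3))=x_1*_\rhd\theta(x_2)=\iota(\varepsilon(x))$ by \eqref{Post-con}; thus $\id_H*T=\iota\varepsilon$. Combined with $S*\id_H=\iota\varepsilon$ from the preliminary remark, associativity of convolution gives $T=(S*\id_H)*T=S*(\id_H*T)=S$, which is \eqref{anti-theta}. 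With \eqref{anti-theta} available I would obtain \eqref{anti-theta'} by substituting $S(x_2)=x_2\rhd\theta(x_3)$ into $\theta(x_1)\rhd S(x_2)$, applying \eqref{eq:post-HAD-4} to get $(\theta(x_1)*_\rhd x_2)\rhd\theta(x_3)$, collapsing via \eqref{Post-con'} to $\iota(\varepsilon(\theta(x_1)))\rhd\theta(x_2)$, and simplifying by \eqref{eq:post-HAD-5}, \eqref{post-anti-coalg} and the counit. Applying $\iota\varepsilon$ to \eqref{anti-theta'} and using \eqref{eq:post-HAD-2'} together with $\varepsilon S=\varepsilon$ then yields \eqref{anti-theta''}.

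Finally, I would deduce \eqref{post-anti-coef} from \eqref{anti-theta'} rather than from the anti-multiplicativity of $\theta$ directly, which avoids having to control $\theta$ on $\iota(\lR)$. Writing $\Delta(fx)=x_1\otimes_\lR fx_2$ and applying \eqref{anti-theta'} gives $\theta(fx)=\theta(x_1)\rhd S(fx_2)$; pulling the scalar out by $\lR$-linearity of $S$ as $S(fx_2)=\iota(f)S(x_2)$ and splitting the action by \eqref{eq:post-HAD-3} and \eqref{post-anti-coalg} produces $(\theta(x_1)\rhd\iota(f))(\theta(x_2)\rhd S(x_3))$; recognizing the second factor as $\theta(x_2)$ via a second use of \eqref{anti-theta'} leaves exactly $(\theta(x_1)\rhd\iota(f))\theta(x_2)$, with the scalar appearing as $\iota(f)$ and no spurious twist. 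The main obstacle is \eqref{anti-theta}: everything downstream is a mechanical substitute-and-collapse driven by \eqref{Post-con} and \eqref{Post-con'}, whereas \eqref{anti-theta} rests on the non-formal fact that Lu's asymmetric antipode condition becomes a two-sided convolution inverse in this setting, which is precisely what the Hopf $\lR$-algebra hypothesis secures through $\varepsilon S=\varepsilon$; the remaining difficulty is purely the bookkeeping of Sweedler indices under coassociativity and cocommutativity.
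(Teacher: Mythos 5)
Your proposal is correct, and for half of the identities it coincides with the paper's own proof: your derivations of \eqref{anti-theta''''}, \eqref{anti-theta'''} and \eqref{anti-theta'} are exactly the paper's chains. The differences are local but worth recording. For \eqref{anti-theta}, you package the argument as uniqueness of convolution inverses ($\id_H * T=\iota\varepsilon$ from \eqref{Post-con}, $S*\id_H=\iota\varepsilon$ from the antipode axiom plus $\varepsilon S=\varepsilon$, hence $T=S$ by associativity), whereas the paper runs the element-wise chain $S(x)=S(x_1)\iota(\varepsilon(x_2))=S(x_1)\bigl(x_2*_\rhd\theta(x_3)\bigr)=\iota(\varepsilon(x_1))(x_2\rhd\theta(x_3))=x_1\rhd\theta(x_2)$; unwinding your convolution identity reproduces precisely this chain, so this is the same computation in cleaner packaging. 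Your proof of \eqref{anti-theta''} (apply $\iota\varepsilon$ to \eqref{anti-theta'} and use \eqref{eq:post-HAD-2'} with $\varepsilon S=\varepsilon$) is genuinely shorter than the paper's route through \eqref{eq:post-HAD-3} and \eqref{Post-con'}. The most substantive divergence is \eqref{post-anti-coef}: the paper proves it \emph{first}, in one line, from the anti-multiplicativity of $\theta$ with respect to $*_\rhd$, the identity $\theta(\iota(f))=\iota(f)$ (itself immediate from \eqref{Post-con} applied to $\iota(f)$), axiom \eqref{post-anti-coalg}, and centrality of $\iota(\lR)$; you prove it \emph{last}, from \eqref{anti-theta'} and the $\lR$-linearity of $S$. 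Both deductions are valid, but the paper's ordering is not arbitrary: \eqref{post-anti-coef}, together with \eqref{anti-theta''''}, is exactly what the paper later cites (proof of Theorem \ref{thm:ph-braiding}) to guarantee that Sweedler expressions mixing $\theta$ with legs of $\Delta$ over $\otimes_\lR$, such as $x_1\rhd\theta(x_2)$, do not depend on the chosen representative. Your ordering manipulates all such expressions --- including the definition of $T$ itself, whose claimed $\lR$-linearity $T(fx)=fT(x)$ silently privileges the representative $fx_1\otimes_\lR x_2$ of $\Delta(fx)$ --- before any control of $\theta$ on $\lR$-multiples is available. This is acceptable at the paper's level of informality, since the axioms \eqref{Post-con}--\eqref{Post-con'} are themselves stated under the same convention, but it inverts the natural dependency; and your stated motivation that this route ``avoids having to control $\theta$ on $\iota(\lR)$'' saves very little, as that control is the paper's one-line computation $\theta(\iota(f))=\iota(f)$, which is needed anyway (it is what makes $\theta$ an antipode in Corollary \ref{coro:GL_Hopf_algebroid}).
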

\begin{proof}
First according to Eq.~\eqref{Post-con}, we have
$$
\theta(\iota(f)) = 1_H*_\rhd\theta(\iota(f))  = \iota(f)*_\rhd\theta(1_H)  =
\iota(\varepsilon(\iota(f))) = \iota(f),
$$
as $\Delta(\iota(f))=\iota(f)\otimes_\lR 1_H=1_H\otimes_\lR\iota(f)$.
Then
\begin{eqnarray*}
\theta(fx)= \theta(\iota(f)*_\rhd x)=\theta(x) *_\rhd \theta(\iota(f))
\stackrel{\eqref{post-anti-coalg}, \eqref{eq:GL-product}}{=} \theta(x_1)(\theta(x_2)\rhd \iota(f))= (\theta(x_1)\rhd \iota(f))\theta(x_2).
\end{eqnarray*}
Next since $H$ is a Hopf $\lR$-algebra,
we know that the antipode $S$ is $\lR$-linear and $\varepsilon S=\varepsilon$, so
\begin{eqnarray*}
S(x) &=& S(\varepsilon(x_1)x_2)= S(x_1)\iota(\varepsilon(x_2))\stackrel{\eqref{Post-con}}{=} S(x_1)(x_2*_\rhd \theta(x_3))\stackrel{\eqref{eq:GL-product}}{=} S(x_1)x_2(x_3\rhd \theta(x_4))\\
&=& \iota(\varepsilon(S(x_1)))(x_2\rhd \theta(x_3))\iota(\varepsilon(x_1))(x_2\rhd \theta(x_3)) \stackrel{\eqref{eq:post-HAD-2}}{=}  x_1\rhd \theta(x_2),
\end{eqnarray*}
and then
\begin{eqnarray*}
\theta(x_1)\rhd S(x_2) &\stackrel{\eqref{anti-theta}}{=}& \theta(x_1)\rhd (x_2\rhd \theta(x_3))\stackrel{\eqref{eq:post-HAD-4},\eqref{eq:GL-product}}{=}
(\theta(x_1)*_\rhd x_2)\rhd \theta(x_3)\\
&\stackrel{\eqref{Post-con'}}{=}&
\iota(\varepsilon(\theta(x_1)))\rhd \theta(x_2)\stackrel{\eqref{eq:post-HAD-5}}{=}
\varepsilon(\theta(x_1))\theta(x_2)\ \stackrel{\eqref{post-anti-coalg}}{=}\
\theta(x).
\end{eqnarray*}
Also, we have
\begin{eqnarray*}
\theta(x_1)\rhd \iota(\varepsilon(x_2))
&=&\theta(x_1)\rhd \iota(\varepsilon(S(x_2)))
= \theta(x_1)\rhd S(x_3)x_2\stackrel{\eqref{eq:post-HAD-3}}{=} (\theta(x_1)\rhd S(x_2))(\theta(x_3)\rhd x_4)\\
&\stackrel{\eqref{anti-theta'}}{=}&
\theta(x_1)(\theta(x_2)\rhd x_3)\stackrel{\eqref{post-anti-coalg}, \eqref{eq:GL-product}}{=}
\theta(x_1)*_\rhd x_2\ \stackrel{\eqref{Post-con'}}{=}\  \iota(\varepsilon(\theta(x))),\\[.5em]
x_1*_\rhd (\theta(x_2)\rhd y) &\stackrel{\eqref{eq:GL-product}}{=}&
 x_1(x_2\rhd (\theta(x_3)\rhd y))
 \stackrel{\eqref{eq:post-HAD-4},\eqref{Post-con}}{=}
  x_1(\iota(\varepsilon(x_2))\rhd y)
 \stackrel{\eqref{eq:post-HAD-5}}{=} x_1\iota(\varepsilon(x_2)y \ =\  xy.
\end{eqnarray*}
Then taking $y=1_H$ in Eq.~\eqref{anti-theta''''} and applying Eq.~\eqref{eq:post-HAD-1'},  we see that $x_1*_\rhd \iota(\varepsilon(\theta(x_2)))=x$.
Hence, we have shown all the desired equalities.
\end{proof}

\begin{coro}
\label{coro:GL_Hopf_algebroid}
Given a post-Hopf algebroid $(H,\iota,\rhd,\theta)$ over $\lR$,
the Grossman-Larson bialgebroid $H_\rhd=(H,*_\rhd,\Delta,\varepsilon,\iota)$ equipped with $\theta$ forms a Hopf algebroid over $R$, which we call the {\bf Grossman-Larson Hopf algebroid} over $\lR$.
\end{coro}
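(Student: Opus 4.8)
The plan is to show that the bialgebroid $H_\rhd$, already produced in Theorem~\ref{thm:GL-bialgebroid}, together with the map $\theta$ satisfies the three antipode axioms (a)--(c) of the Hopf algebroid definition (Definition~\ref{defi:Hopf-algebroid}), where the total-algebra multiplication $m_{H_\rhd}$ is now the Grossman-Larson product $*_\rhd$. Since Theorem~\ref{thm:GL-bialgebroid} already supplies the full bialgebroid data $(H,*_\rhd,\Delta,\varepsilon,\iota)$ over $\lR$, nothing has to be reproved about the coproduct, counit, or source/target map; the task reduces entirely to checking that $\theta$ is an admissible antipode for $H_\rhd$.

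First I would record that, by the very Definition~\ref{defi:post-Hopf} of a post-Hopf algebroid, $\theta$ is an algebra anti-automorphism of $(H,*_\rhd)$, hence the required algebra anti-isomorphism of the total algebra. Next I would verify the three axioms in turn. Axiom~(a), namely $\theta\iota=\iota$, is exactly the identity $\theta(\iota(f))=\iota(f)$ established at the start of the proof of the preceding lemma, obtained by applying \eqref{Post-con} to $x=\iota(f)$ with $\Delta(\iota(f))=1_H\otimes_\lR\iota(f)$ and using $\varepsilon\iota=\id_\lR$. Axiom~(b), $m_{H_\rhd}(\theta\otimes_\lR\id_H)\Delta=\iota\varepsilon\theta$, unwinds under $m_{H_\rhd}=*_\rhd$ to $\theta(x_1)*_\rhd x_2=\iota(\varepsilon(\theta(x)))$, which is precisely \eqref{Post-con'}. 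Axiom~(c), $m_{H_\rhd}(\id_H\otimes_\lR\theta)\Delta=\iota\varepsilon$, unwinds to $x_1*_\rhd\theta(x_2)=\iota(\varepsilon(x))$, which is precisely \eqref{Post-con}.

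In short, the defining conditions \eqref{post-anti-coalg}--\eqref{Post-con'} of a post-Hopf algebroid were tailored so that $\theta$ supplies the antipode of $H_\rhd$: two of the antipode axioms are literally the hypotheses, and the anti-automorphism requirement is built in. I therefore do not expect any genuine obstacle. The only point that is not a verbatim citation is axiom~(a), and even there the work is a single short computation already carried out in the proof of the preceding lemma, so I would simply invoke it. Note that the coalgebra-compatibility \eqref{post-anti-coalg} is extra structure not demanded by Lu's antipode axioms, so it plays no role in this particular verification.
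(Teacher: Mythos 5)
Your proof is correct and is essentially the paper's own argument: the paper likewise notes $\theta\iota=\iota$ (established in the preceding lemma's proof) and then observes that axioms (b) and (c) of Definition~\ref{defi:Hopf-algebroid} for the product $*_\rhd$ are exactly Eqs.~\eqref{Post-con'} and \eqref{Post-con}, with the anti-automorphism property built into Definition~\ref{defi:post-Hopf}. Your identification of which equation matches which axiom is accurate, so nothing further is needed.
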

\begin{proof}
Now it is clear that $\theta\iota=\iota$. So $H_\rhd$ is a Hopf algebroid over $R$ by Eqs.~\eqref{Post-con} and \eqref{Post-con'}.
\end{proof}

\begin{theorem}\label{thm:action-post-Hopf}
Let $(A,\rhd)$ be a cocommutative post-Hopf algebra and $\lR$ be an $A_\rhd$-module algebra via an action $\rightharpoonup$, where $A_\rhd=(A,*_\rhd,1_A,\Delta,\varepsilon,S_\rhd)$ is the sub-adjacent Hopf algebra of $(A,\rhd)$. Then the tensor product Hopf $\lR$-algebra $H=\lR\otimes A$ given in Example~\ref{ex:tensor-Hopf} has a natural post-Hopf algebroid structure $(H,\Delta_H,\varepsilon_H,S_H,\iota,\overline{\rhd},\theta)$ over $\lR$, where
\begin{eqnarray}
\label{action-post-Hopf-1}
fa\,\overline{\rhd}\, gb &=& f(a_1\rightharpoonup g)(a_2\rhd b),\\
\label{action-post-Hopf-2}
\theta(fa) &=& (S_\rhd(a_1)\rightharpoonup f)S_\rhd(a_2)
\end{eqnarray}
for any $f,g\in\lR$ and $a,b\in A$.
We call $(H,\iota,\overline{\rhd},\theta)$ an {\bf action post-Hopf algebroid} over $\lR$. In particular,
the corresponding Grossman-Larson Hopf algebroid $H_{\overline{\rhd}}=(H,*_{\overline{\rhd}},\Delta_H,\varepsilon_H,\theta,\iota)$ is the action Hopf algebroid $\lR\# A_\rhd$ defined as in Theorem~\ref{prop:smash-prod}.
\end{theorem}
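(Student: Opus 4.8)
The plan is to verify the statement in two stages: first show that $(H,\iota,\overline{\rhd})$ satisfies the seven axioms of a weak post-Hopf algebroid (Definition~\ref{defi:weak-post-Hopf}), and then produce the anti-automorphism $\theta$ not by a direct check of \eqref{post-anti-coalg}--\eqref{Post-con'}, but by identifying the Grossman--Larson bialgebroid $H_{\overline{\rhd}}$ with the action Hopf algebroid $\lR\#A_\rhd$ of Theorem~\ref{prop:smash-prod} and reading off $\theta$ as its antipode.

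For the first stage, note that $H=\lR\otimes A$ is a cocommutative Hopf $\lR$-algebra by Example~\ref{ex:tensor-Hopf} together with the cocommutativity of $A$, and $\iota(\lR)=\lR\otimes 1_A$ is central, so the ambient hypotheses of Definition~\ref{defi:weak-post-Hopf} hold. I would then check the defining relations on elements $fa,gb,hc$. The unit and counit relations \eqref{eq:post-HAD-1'}, \eqref{eq:post-HAD-1''}, \eqref{eq:post-HAD-2'} and the $\lR$-linearity \eqref{eq:post-HAD-2} reduce immediately to \eqref{Post-1}, \eqref{Post-3} and the module-algebra axioms \eqref{eq:mod-alg-0}--\eqref{eq:mod-alg-2} for $\rightharpoonup$. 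The comultiplicativity \eqref{eq:post-HAD-1} uses that $\rhd$ is a coalgebra homomorphism on $A$ together with the multiplicativity of $\rightharpoonup$, and the Leibniz rule \eqref{eq:post-HAD-3} follows by combining the post-Hopf relation \eqref{Post-2} with the product rule $a\rightharpoonup gh=(a_1\rightharpoonup g)(a_2\rightharpoonup h)$, then using the centrality of the $\lR$-factors and cocommutativity to reorder the coproduct legs of $a$.

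The crux is the associativity-type relation \eqref{eq:post-HAD-4}. Expanding the left side $fa\,\overline{\rhd}\,(gb\,\overline{\rhd}\,hc)$ requires the post-Hopf axiom \eqref{Post-4} on $A$ in the form $a\rhd(b\rhd c)=(a*_\rhd b)\rhd c$, together with the module-action associativity $a\rightharpoonup(b\rightharpoonup h)=(a*_\rhd b)\rightharpoonup h$ coming from the $A_\rhd$-product, while the right side $\big(fa*_{\overline{\rhd}}gb\big)\,\overline{\rhd}\,hc$ requires the multiplicativity of $\Delta$ on the sub-adjacent Hopf algebra $A_\rhd$. After coassociativity I expect both sides to collapse to
\[
f(a_1\rightharpoonup g)\,\big((a_2*_\rhd b_1)\rightharpoonup h\big)\,\big((a_3*_\rhd b_2)\rhd c\big),
\]
with $a_1\otimes a_2\otimes a_3$ the iterated coproduct of $a$ and $b_1\otimes b_2$ that of $b$. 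This is the main obstacle, since it is where the post-Hopf product $\rhd$ and the module action $\rightharpoonup$ must be made to interact coherently; keeping the bookkeeping of the coproduct legs straight is the only real difficulty.

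For the second stage, I would compute the Grossman--Larson product of Theorem~\ref{thm:GL-bialgebroid} directly: using $\Delta_H(fa)=fa_1\otimes_\lR 1_\lR a_2$, centrality of $\lR$ and cocommutativity one gets
\[
fa*_{\overline{\rhd}}gb=f(a_1\rightharpoonup g)(a_2*_\rhd b),
\]
which is exactly the smash-product multiplication of $\lR\#A_\rhd$; since $\Delta_H$, $\varepsilon_H$ and $\iota$ also agree, $H_{\overline{\rhd}}$ coincides with the action Hopf algebroid $\lR\#A_\rhd$ of Theorem~\ref{prop:smash-prod}. That theorem supplies an antipode $S_H(fa)=(S_\rhd(a_1)\rightharpoonup f)S_\rhd(a_2)$, which is precisely the claimed $\theta$. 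As the antipode of $\lR\#A_\rhd$ it is an algebra anti-automorphism of $H_{\overline{\rhd}}$, and the Hopf-algebroid antipode axioms (c) and (b) of Definition~\ref{defi:Hopf-algebroid} are exactly \eqref{Post-con} and \eqref{Post-con'}; finally \eqref{post-anti-coalg} follows from the cocommutativity of $H$, under which the antipode is a coalgebra homomorphism. This establishes the post-Hopf algebroid structure $(H,\iota,\overline{\rhd},\theta)$ and simultaneously identifies its Grossman--Larson Hopf algebroid with $\lR\#A_\rhd$.
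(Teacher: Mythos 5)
Your first stage follows the paper's proof essentially step for step: the paper likewise verifies \eqref{eq:post-HAD-1}--\eqref{eq:post-HAD-4} on elements $fa,gb,hc$, with \eqref{eq:post-HAD-3} coming from \eqref{Post-2} and \eqref{eq:post-HAD-4} from \eqref{Post-4} together with \eqref{post-rbb-1}, and both sides of \eqref{eq:post-HAD-4} collapse to exactly the expression $f(a_1\rightharpoonup g)\big((a_2*_\rhd b_1)\rightharpoonup h\big)\big((a_3*_\rhd b_2)\rhd c\big)$ you predict. Your second stage, however, genuinely departs from the paper: the paper constructs $\theta$ by brute force, checking anti-multiplicativity, $\theta^2=\id$, and \eqref{post-anti-coalg}--\eqref{Post-con'} through explicit smash-product computations, whereas you identify the Grossman--Larson bialgebroid $H_{\overline{\rhd}}$ with $\lR\# A_\rhd$ and import its antipode from Theorem~\ref{prop:smash-prod}. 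This is sound as far as it goes: the identification of products, coproducts, counits and source maps is correct, Lu's antipode axioms (b) and (c) of Definition~\ref{defi:Hopf-algebroid} applied to $m_H=*_{\overline{\rhd}}$ are literally \eqref{Post-con'} and \eqref{Post-con}, and the anti-automorphism property of $\theta$ also comes with the package. It is the more economical route, and it turns the paper's closing identification $H_{\overline{\rhd}}=\lR\#A_\rhd$ from a byproduct into the engine of the proof.

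The gap is your treatment of \eqref{post-anti-coalg}. This condition is not among Lu's antipode axioms, so Theorem~\ref{prop:smash-prod} does not supply it, and the principle you invoke---that cocommutativity forces the antipode to be a coalgebra homomorphism---is a Hopf-\emph{algebra} fact whose standard proof (convolution invertibility in $\Hom(H,H\otimes H)$) does not transfer to Hopf algebroids: here $H\otimes_\lR H$ is not an algebra, $\varepsilon_H$ is not an algebra map, the antipode of a Hopf algebroid need not be unique (the paper remarks on exactly this point after Definition~\ref{defi:post-Hopf}), and $\theta\otimes\theta$ does not even obviously descend to $H\otimes_\lR H$, since $\iota(\lR)$ is not central for $*_{\overline{\rhd}}$ (indeed $gb*_{\overline{\rhd}}\iota(f)=g(b_1\rightharpoonup f)b_2\neq fgb$ in general). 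Fortunately the repair is one line, because you already have the explicit formula \eqref{action-post-Hopf-2} in hand: since $A_\rhd$ is a cocommutative Hopf algebra, $S_\rhd$ is a coalgebra endomorphism of $A$, hence
\[
\Delta_H(\theta(fa))=(S_\rhd(a_1)\rightharpoonup f)S_\rhd(a_2)\otimes_\lR S_\rhd(a_3)=\theta(fa_1)\otimes_\lR\theta(a_2),
\]
where the last equality uses $\theta(a)=S_\rhd(a)$, itself a consequence of $a\rightharpoonup 1_\lR=\varepsilon(a)1_\lR$ and $\varepsilon S_\rhd=\varepsilon$. This is precisely the computation the paper performs; with it inserted in place of the appeal to the general principle, your proof is complete.
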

\begin{proof}
First Eqs.~\eqref{eq:post-HAD-1}--\eqref{eq:post-HAD-1''} and \eqref{eq:post-HAD-2} are easy to see.
For any $f,g,h\in R$ and $a,b,c\in A$,
we have
\begin{eqnarray*}
\iota(\varepsilon_H(fa\,\overline{\rhd}\, gb)) &\stackrel{\eqref{action-post-Hopf-1}}{=}& \iota(\varepsilon_H(f(a_1\rightharpoonup g)(a_2\rhd b))) = \iota(f(a_1\rightharpoonup g)\varepsilon(a_2\rhd b))\\
&=& \iota(f(a_1\rightharpoonup g)\varepsilon(a_2)\varepsilon(b)) = \iota(f(a\rightharpoonup \varepsilon_H(gb)))
\stackrel{\eqref{action-post-Hopf-1}}{=} fa\,\overline{\rhd}\, \iota(\varepsilon_H(gb)),
\end{eqnarray*}
which implies that Eq.~\eqref{eq:post-HAD-2'} holds.

Eq.~\eqref{eq:post-HAD-3}   follows from
\begin{eqnarray*}
fa\,\overline{\rhd}\, (gb)(hc) &=&  fa\,\overline{\rhd}\, (gh)(bc)\\
&\stackrel{\eqref{action-post-Hopf-1}}{=}& f(a_1\rightharpoonup gh)(a_2\rhd bc)\\
&\stackrel{\eqref{Post-2}}{=}& f(a_1\rightharpoonup g)(a_2\rightharpoonup h)(a_3\rhd b)(a_4\rhd c)\\
&=& (f(a_1\rightharpoonup g)(a_2\rhd b))((a_3\rightharpoonup h)(a_4\rhd c))\\
&\stackrel{\eqref{action-post-Hopf-1}}{=}& (fa_1\,\overline{\rhd}\, gb)(a_2\,\overline{\rhd}\, hc).
\end{eqnarray*}
  Eq.~\eqref{eq:post-HAD-4} follows from
\begin{eqnarray*}
fa\,\overline{\rhd}\, (gb \,\overline{\rhd}\, hc) &\stackrel{\eqref{action-post-Hopf-1}}{=}& fa\,\overline{\rhd}\, g(b_1\rightharpoonup h)(b_2\rhd c)\\
&\stackrel{\eqref{action-post-Hopf-1}}{=}& f(a_1\rightharpoonup g(b_1\rightharpoonup h))(a_2\rhd (b_2\rhd  c))\\
&\stackrel{\eqref{Post-4},\eqref{post-rbb-1}}{=}& f(a_1\rightharpoonup g)(a_2\rightharpoonup (b_1\rightharpoonup h))
((a_3*_\rhd b_2)\rhd c)\\
&=& f(a_1\rightharpoonup g)((a_2*_\rhd b_1)\rightharpoonup h) ((a_3*_\rhd b_2)\rhd  c),\\
fa_1(a_2\,\overline{\rhd}\, gb) \,\overline{\rhd}\, hc &\stackrel{\eqref{action-post-Hopf-1}}{=}&  f(a_1\rightharpoonup g)a_2(a_3\rhd b)\,\overline{\rhd}\, hc\\
&\stackrel{\eqref{post-rbb-1}}{=}&  f(a_1\rightharpoonup g)(a_2*_\rhd b)\,\overline{\rhd}\, hc\\
&\stackrel{\eqref{action-post-Hopf-1}}{=}& f(a_1\rightharpoonup g)((a_2*_\rhd b_1)\rightharpoonup h) ((a_3*_\rhd b_2)\rhd  c).
\end{eqnarray*}
Hence, the tuple $(H,\iota,\overline{\rhd})$ is a weak post-Hopf algebroid over $\lR$.

Now we compute the Grossman-Larson product $*_{\overline{\rhd}}$ on $H$ as follows.
\begin{eqnarray*}
fa *_{\overline{\rhd}} gb &\stackrel{\eqref{eq:GL-product}}{=}& fa_1(a_2\,\overline{\rhd}\, gb)\ \stackrel{\eqref{action-post-Hopf-1}}{=}\
 f(a_1\rightharpoonup g)a_2(a_3\rhd b)
\ \stackrel{\eqref{post-rbb-1}}{=}\ f(a_1\rightharpoonup g)(a_2*_\rhd b).
\end{eqnarray*}
Next we check that the linear operator $\theta$ defined by \eqref{action-post-Hopf-2} is an algebra anti-involution of $H_{\overline{\rhd}}$.
\begin{eqnarray*}
\theta(fa)*_{\overline{\rhd}}\theta(gb) &\stackrel{\eqref{action-post-Hopf-2}}{=}& (S_\rhd(a_1)\rightharpoonup f)S_\rhd(a_2) *_{\overline{\rhd}}(S_\rhd(b_1)\rightharpoonup g)S_\rhd(b_2)\\
&=& (S_\rhd(a_1)\rightharpoonup f) (S_\rhd(a_2)\rightharpoonup(S_\rhd(b_1)\rightharpoonup g))(S_\rhd(a_3)*_\rhd S_\rhd(b_2))\\
&=& (S_\rhd(a_1)\rightharpoonup f)((S_\rhd(a_2)*_\rhd S_\rhd(b_1))\rightharpoonup g)(S_\rhd(a_3)*_\rhd S_\rhd(b_2))\\
&=&  (S_\rhd(a_1)\rightharpoonup f)(S_\rhd(b_1*_\rhd a_2)\rightharpoonup g)S_\rhd(b_2*_\rhd a_3),\\
\theta(gb*_{\overline{\rhd}} fa)
&=& \theta((b_1\rightharpoonup f)g(b_2*_\rhd a))\\
&\stackrel{\eqref{action-post-Hopf-2}}{=}& (S_\rhd(b_1*_\rhd a_1)\rightharpoonup (b_2\rightharpoonup f)g)S_\rhd(b_3*_\rhd a_2)\\
&=& (S_\rhd(b_1*_\rhd a_1)\rightharpoonup(b_2\rightharpoonup f))(S_\rhd(b_3*_\rhd a_2)\rightharpoonup g)S_\rhd(b_4*_\rhd a_3)\\
&=& ((S_\rhd(a_1)*_\rhd S_\rhd(b_1)*_\rhd b_2)\rightharpoonup f)(S_\rhd(b_3*_\rhd a_2)\rightharpoonup g)S_\rhd(b_4*_\rhd a_3)\\
&=& (S_\rhd(a_1)\rightharpoonup f)(S_\rhd(b_1*_\rhd a_2)\rightharpoonup g)S_\rhd(b_2*_\rhd a_3),\\[.5em]
\theta^2(fa) &\stackrel{\eqref{action-post-Hopf-2}}{=}& \theta((S_\rhd(a_1)\rightharpoonup f)S_\rhd(a_2))\\
&\stackrel{\eqref{action-post-Hopf-2}}{=}&(S^2_\rhd(a_1)\rightharpoonup (S_\rhd(a_2)\rightharpoonup f))S^2_\rhd(a_3)\\
&=&((a_1*_\rhd S_\rhd(a_2))\rightharpoonup f)a_3\\
&=& f\varepsilon(a_1)a_2 \ =\ fa.
\end{eqnarray*}
Meanwhile, we have
\begin{eqnarray*}
\Delta(\theta(fa)) &\stackrel{\eqref{action-post-Hopf-2}}{=}&
\Delta((S_\rhd(a_1)\rightharpoonup f)S_\rhd(a_2))\\
&=& (S_\rhd(a_1)\rightharpoonup f)S_\rhd(a_2)\otimes_\lR S_\rhd(a_3)\\
&\stackrel{\eqref{action-post-Hopf-2}}{=}& \theta(fa_1)\otimes_\lR \theta(a_2).\\[.5em]
fa_1*_{\overline{\rhd}} \theta(ga_2)
&\stackrel{\eqref{action-post-Hopf-2}}{=}& fa_1*_{\overline{\rhd}}(S_\rhd(a_2)\rightharpoonup g)S_\rhd(a_3)\\
&=& f(a_1 \rightharpoonup (S_\rhd(a_2)\rightharpoonup g))(a_3*_\rhd S_\rhd(a_4))\\
&=& f((a_1*_\rhd S_\rhd(a_2))\rightharpoonup g)(a_3*_\rhd S_\rhd(a_4))\\
&=& fg\varepsilon(a)\ =\ \varepsilon_H(fga).\\[.5em]
\theta(fa_1)*_{\overline{\rhd}} ga_2 &\stackrel{\eqref{action-post-Hopf-2}}{=}& (S_\rhd(a_1)\rightharpoonup f)S_\rhd(a_2)*_{\overline{\rhd}} ga_3\\
&=& (S_\rhd(a_1)\rightharpoonup f)(S_\rhd(a_2)\rightharpoonup g)(S_\rhd(a_3)*_\rhd a_4)\\
&=& (S_\rhd(a_1)\rightharpoonup fg)\varepsilon(S_\rhd(a_2))
\ =\ \varepsilon_H(\theta(fga)).
\end{eqnarray*}
Therefore, $\theta$ also satisfies all conditions~\eqref{post-anti-coalg}--\eqref{Post-con'}. The tensor product Hopf $\lR$-algebra $H=\lR\otimes A$ has the desired post-Hopf algebroid structure over $\lR$.
\end{proof}

\subsection{Braiding operators on Hopf algebroids}

Guccione, Guccione and Vendramin introduced the notion of a braiding operator on a cocommutative Hopf algebra in \cite[Definition~5.9]{GGV}.
Here we further generalize this notion to Hopf algebroids.
We interpret any Hopf algebroid $H$ over $\lR$ as an $\lR$-bimodule via the source map $\iota:\lR\to H$, and we use the notation $\bmot$ to denote the tensor product of $\lR$-bimodules, different from the former tensor product $\otimes_\lR$ of left $\lR$-modules.
\begin{defn}\label{def:braiding}
Given a Hopf algebroid $H$ over $\lR$, an $\lR$-bimodule homomorphism
$r:H\bmot H\to H\bmot H$ is called a {\bf braiding operator} on $H$ if it satisfies
\begin{enumerate}[(a)]
\item\label{eq:pb-coprod}
$(\id_H\otimes \tau\otimes\id_H)(\Delta\otimes \Delta)r = (r\otimes r)(\id_H\otimes \tau\otimes\id_H)(\Delta\otimes \Delta)$ as $\lR$-bimodule homomorphisms from $H\bmot H$ to $(H\bmot H)\otimes_\lR (H\bmot H)$,
\item\label{eq:pb-a}
$mr=m$,
\item\label{eq:pb-b}
$r(m\bmot\id_H)=(\id_H\bmot m)(r\bmot \id_H)(\id_H\bmot r)$,
\item\label{eq:pb-c}
$r(\id_H\bmot m)=(m\bmot\id_H)(\id_H\bmot r)(r\bmot \id_H)$,
\item\label{eq:pb-d}
$r(1_H\bmot x)=x\bmot 1_H,\ \forall x\in H$,
\item\label{eq:pb-e}
$r(x\bmot 1_H)=1_H\bmot x,\ \forall x\in H$,
\end{enumerate}
where $\tau$ is the flip map and $m$ is the multiplication of $H$, and
we can take the tensor product $\otimes_\lR$ since the $\lR$-bimodule $H\bmot H$ is a left $\lR$-module.
\end{defn}

\begin{lemma}\label{lem:pb-counit}
Given a braiding operator $r$ on a Hopf algebroid $H$ over $\lR$, we have
\begin{eqnarray}\label{eq:pb-counit}
\varepsilon( m(r(x\bmot y)))
&=&
\varepsilon(x \iota(\varepsilon(y))),\quad \forall x,y\in H.
\end{eqnarray}
\end{lemma}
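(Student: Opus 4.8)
The plan is to recognize that the expression on the left of \eqref{eq:pb-counit} collapses in two elementary steps, so that the coproduct compatibility \ref{eq:pb-coprod} and the hexagon-type relations \ref{eq:pb-b}, \ref{eq:pb-c} play no role; only the normalization axiom $mr=m$ and the counit structure of the underlying bialgebroid are needed.

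First I would apply condition \ref{eq:pb-a} of Definition~\ref{def:braiding}, which asserts that $mr=m$ as $\lR$-bimodule homomorphisms $H\bmot H\to H$. Evaluating this equality of maps at $x\bmot y$ yields $m(r(x\bmot y))=m(x\bmot y)=xy$, so that the left-hand side of \eqref{eq:pb-counit} is simply $\varepsilon(xy)$. Second, I would invoke the counit axiom (iii)(b) of Definition~\ref{defi:Hopf-algebroid}, namely that $\ker\varepsilon$ is a left ideal of $H$, recorded there as $\varepsilon\big(x(y-\iota\varepsilon(y))\big)=0$ for all $x,y\in H$. By $\bk$-linearity of $\varepsilon$ this rearranges to $\varepsilon(xy)=\varepsilon\big(x\,\iota(\varepsilon(y))\big)$, which is exactly the right-hand side of \eqref{eq:pb-counit}.

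There is essentially no genuine obstacle here: the lemma follows from the short chain of equalities $\varepsilon(m(r(x\bmot y)))=\varepsilon(xy)=\varepsilon\big(x\,\iota(\varepsilon(y))\big)$, the first equality supplied by $mr=m$ and the second by the left-ideal property of $\ker\varepsilon$. The only point meriting a moment's care is that the rewriting $m(r(x\bmot y))=xy$ must be carried out before $\varepsilon$ is applied, so that $\varepsilon$ acts on an element of $H$ rather than on a tensor; this is immediate since $m$ lands in $H$.
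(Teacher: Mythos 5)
Your proof is correct and follows exactly the same route as the paper's own argument: condition \ref{eq:pb-a} ($mr=m$) reduces the left-hand side to $\varepsilon(xy)$, and the left-ideal property of $\ker\varepsilon$ from Definition~\ref{defi:Hopf-algebroid}(iii)(b) gives $\varepsilon(xy)=\varepsilon(x\,\iota(\varepsilon(y)))$. Your write-up is, if anything, more explicit than the paper's about which counit axiom justifies the second equality.
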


\begin{proof}
Indeed, we have
\begin{eqnarray*}
\varepsilon( m(r(x\bmot y)))
&\stackrel{\ref{eq:pb-a}}{=}&
\varepsilon(xy)\ =\ \varepsilon(x\iota(\varepsilon(y)))
\end{eqnarray*}
for any $x,y\in H$.
\end{proof}

\begin{remark}
When $\lR=\bk$, the counit $\varepsilon:H\to \bk$ becomes an algebra map, and condition~\ref{eq:pb-coprod} and Eq.~\eqref{eq:pb-counit} imply that
$r$ is a coalgebra homomorphism, and thus a braiding operator on a Hopf algebra in the sense of \cite{GGV}.
\end{remark}

\begin{theorem}\label{thm:ph-braiding}
Let $(H,\iota,\rhd,\theta)$ be a post-Hopf algebroid over $\lR$.
There exists an $\lR$-bimodule homomorphism $r=r_{H_\rhd}:H_\rhd\bmot H_\rhd\to H_\rhd\bmot H_\rhd$ defined by
\begin{eqnarray}\label{eq:pha_ybo}
r(x\bmot y) &\coloneqq& x_1\rhd y_1\bmot \theta(x_2\rhd y_2)*_\rhd x_3*_\rhd y_3,
\end{eqnarray}
which  is a braiding operator on the Grossman-Larson Hopf algebroid $H_\rhd$ over $\lR$.
\end{theorem}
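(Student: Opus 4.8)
The plan is to verify directly that the map $r$ of \eqref{eq:pha_ybo} is well defined and satisfies each of the six axioms \ref{eq:pb-coprod}--\ref{eq:pb-e} of Definition~\ref{def:braiding}. The computation closely parallels the one producing a braiding operator from a cocommutative post-Hopf algebra in \cite{LST,GGV}, with $\theta$ playing the role of the sub-adjacent antipode $S_\rhd$; the genuinely new work is to keep track of the base $\lR$, i.e. to show that $r$ respects the bimodule balancing of $\bmot$ and to carry the scalars $\iota(\lR)$ through every step. Accordingly, I would first check that $r$ is a well-defined $\lR$-bimodule homomorphism $H_\rhd\bmot H_\rhd\to H_\rhd\bmot H_\rhd$. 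Left $\lR$-linearity in the first slot is immediate from \eqref{eq:post-HAD-2} (using $\Delta(\iota(f)x)=\iota(f)x_1\otimes_\lR x_2$), while compatibility with the middle balancing relation $(x*_\rhd\iota(f))\bmot y=x\bmot fy$ and right $\lR$-linearity rely on the stability of $\iota(\lR)$ under $\rhd$ from \eqref{eq:post-HAD-5'}, the identity $\theta\iota=\iota$, the scalar rule \eqref{post-anti-coef} for $\theta$, the bialgebroid relation Definition~\ref{defi:Hopf-algebroid}(ii)(b) that lets scalars float between tensor legs, and the consistency of the induced $H_\rhd$-action on $\lR$ from Corollary~\ref{coro:left-action}.

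The two unit axioms are settled by a short computation. For \ref{eq:pb-d} one uses $1_H\rhd x=x$ from \eqref{eq:post-HAD-1''} to reduce $r(1_H\bmot x)$ to $x_1\bmot\theta(x_2)*_\rhd x_3$, applies \eqref{Post-con'} to obtain $x_1\bmot\iota(\varepsilon(\theta(x_2)))$, and then combines the balancing with \eqref{anti-theta'''} to get $x\bmot 1_H$; axiom \ref{eq:pb-e} is dual, using \eqref{eq:post-HAD-1'}, $\theta\iota=\iota$, the counit axioms and the balancing. Axiom \ref{eq:pb-a}, $mr=m$ with $m=*_\rhd$, is proved by first grouping $(x_1\rhd y_1)*_\rhd\theta(x_2\rhd y_2)$: since \eqref{eq:post-HAD-1} gives $\Delta(x\rhd y)=(x_1\rhd y_1)\otimes_\lR(x_2\rhd y_2)$, relation \eqref{Post-con} collapses this to $\iota(\varepsilon(x_1\rhd y_1))$, after which \eqref{eq:post-HAD-2'}, \eqref{eq:post-HAD-3}, cocommutativity and \eqref{eq:GL-product} recombine the remaining factors into $x*_\rhd y$. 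The coproduct compatibility \ref{eq:pb-coprod} is the assertion that $r$ is a morphism for the tensor-product coalgebra on $H_\rhd\bmot H_\rhd$; expanding both sides using the comultiplicativity of $\rhd$, $\theta$ and $*_\rhd$ (Eqs.~\eqref{eq:post-HAD-1}, \eqref{post-anti-coalg} and \eqref{eq:111}) and reorganizing via coassociativity and cocommutativity, both sides reduce to the same four-legged Sweedler expression.

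The heart of the proof is the pair of hexagon axioms \ref{eq:pb-b} and \ref{eq:pb-c}; by symmetry I would treat \ref{eq:pb-b} in detail. Writing $w=x*_\rhd y$ and using \eqref{eq:111} to expand $\Delta^{(2)}(w)$, the first tensor leg of $r((x*_\rhd y)\bmot z)$ is $(x_1*_\rhd y_1)\rhd z_1$, which by \eqref{eq:GL-product} and the post-Lie-type relation \eqref{eq:post-HAD-4} equals $x_1\rhd(y_1\rhd z_1)$; this matches the first leg produced by $(\id_H\bmot m)(r\bmot\id_H)(\id_H\bmot r)$, where the inner $r(y\bmot z)$ contributes exactly $y_1\rhd z_1$ in the relevant slot. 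The main obstacle is matching the \emph{second} tensor legs: one must show that $\theta(w_2\rhd z_2)*_\rhd w_3*_\rhd z_3$ coincides with $(\theta(x_2\rhd p_2)*_\rhd x_3*_\rhd p_3)*_\rhd q$, where $p=y_1\rhd z_1$ and $q=\theta(y_2\rhd z_2)*_\rhd y_3*_\rhd z_3$. I expect to establish this by repeatedly invoking \eqref{eq:post-HAD-4} to peel off the $\rhd$-actions, using that $H$ is a module algebra over $H_\rhd$ via $\rhd$ (Theorem~\ref{thm:GL-bialgebroid}), the anti-coalgebra and antipode identities \eqref{post-anti-coalg}, \eqref{Post-con}, \eqref{Post-con'} together with \eqref{anti-theta}--\eqref{anti-theta'''}, and associativity of $*_\rhd$, all under cocommutativity. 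Axiom \ref{eq:pb-c} follows by the mirror-image argument. The delicate points throughout are purely bookkeeping over $\lR$: ensuring that each occurrence of $\iota(\lR)$ stays central and that every manipulation is compatible with $\otimes_\lR$ and $\bmot$.
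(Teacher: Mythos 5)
Your proposal is correct and takes essentially the same approach as the paper's proof: a direct verification of well-definedness, the $\lR$-bimodule property, and each braiding axiom, using the same identities (\eqref{eq:post-HAD-1}, \eqref{Post-con}, \eqref{Post-con'}, \eqref{eq:post-HAD-4}, \eqref{eq:GL-product}) and the same collapsing mechanism (pairing the legs of $x\rhd y$ against $\theta(x\rhd y)$ via \eqref{Post-con}, then recombining via \eqref{eq:post-HAD-4}). The only soft spot is calling axiom \ref{eq:pb-c} a ``mirror image'' of \ref{eq:pb-b}: in the paper it is the longest computation and hinges essentially on \eqref{anti-theta''''}, i.e. $xy=x_1*_\rhd(\theta(x_2)\rhd y)$, to eliminate the $\theta$-factor produced by the inner application of $r$, rather than on a symmetric repetition of the first hexagon --- though your stated toolkit does include that identity.
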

\begin{proof}
First, note that the map $r$ is well-defined, since the RHS of Eq.~\eqref{eq:pha_ybo} is independent with any expression of the coproduct $\Delta$ according to Eqs.~\eqref{post-anti-coef} and \eqref{anti-theta''''} when $x$ or  $y$ is of the form $\iota(f)*_\rhd z$ or $z*_\rhd\iota(f)$.
Moreover, $r$ is an $\lR$-bimodule homomorphism. That is,
\begin{eqnarray*}
r(\iota(f)*_\rhd x\bmot y) &=& \iota(f)*_\rhd(x_1\rhd y_1)\bmot \theta(x_2\rhd y_2)*_\rhd x_3*_\rhd y_3\ =\ \iota(f)*_\rhd r(x\bmot y) ,\\
r(x\bmot y*_\rhd\iota(f)) &=& x_1\rhd y_1\bmot \theta(x_2\rhd y_2)*_\rhd x_3*_\rhd y_3*_\rhd\iota(f)\ =\ r(x\bmot y)*_\rhd\iota(f).
\end{eqnarray*}

We then check that $r$ is a braiding operator on $H_\rhd$.
First note that both hand-sides of the equality in condition~\ref{eq:pb-coprod} are well-defined $\lR$-bimodule homomorphisms,
and then such an equality is due to Eqs.~\eqref{eq:post-HAD-1} and \eqref{post-anti-coalg}.
Condition~\ref{eq:pb-a} is clear by Eqs.~\eqref{eq:pha_ybo}, \eqref{eq:post-HAD-1} and \eqref{Post-con}.
Conditions~\ref{eq:pb-b} and \ref{eq:pb-c} follows from
\begin{eqnarray*}
&&(\id\bmot *_\rhd)(r\bmot \id)(\id\bmot r)(x\bmot y\bmot z)\\
&\stackrel{\eqref{eq:post-HAD-1},\eqref{eq:pha_ybo}}{=}& x_1\rhd(y_1\rhd z_1)\bmot \theta(x_2\rhd(y_2\rhd z_2))*_\rhd x_3*_\rhd(y_3\rhd z_3)*_\rhd  \theta(y_4\rhd z_4)*_\rhd y_5*_\rhd z_5\\
&\stackrel{\eqref{eq:post-HAD-1},\eqref{Post-con}}{=}&
x_1\rhd(y_1\rhd z_1)\bmot \theta(x_2\rhd(y_2\rhd z_2))*_\rhd x_3*_\rhd y_3*_\rhd z_3\\
&\stackrel{\eqref{eq:post-HAD-4},\eqref{eq:GL-product}}{=}&
(x_1*_\rhd y_1)\rhd z_1 \bmot \theta ((x_2*_\rhd y_2)\rhd z_2)*_\rhd x_3*_\rhd y_3*_\rhd z_3\\
&\stackrel{\eqref{eq:pha_ybo}}{=}& r(x*_\rhd y \bmot z),\\[.5em]
&& (*_\rhd\bmot \id)(\id\bmot r)(r\bmot \id)(x\bmot y\bmot z)\\
&\stackrel{\eqref{eq:post-HAD-1},\eqref{eq:pha_ybo}}{=}&  (x_1\rhd y_1)*_\rhd((\theta(x_2\rhd y_2)*_\rhd x_3*_\rhd y_3) \rhd z_1)
\bmot \\
&&\qquad \theta((\theta(x_4\rhd y_4)*_\rhd x_5*_\rhd y_5) \rhd z_2)*_\rhd
\theta(x_6\rhd y_6)*_\rhd x_7*_\rhd y_7*_\rhd z_3 \\
&\stackrel{\eqref{eq:post-HAD-4},\eqref{eq:GL-product}}{=}&  (x_1\rhd y_1)*_\rhd(\theta(x_2\rhd y_2)\rhd ((x_3*_\rhd y_3) \rhd z_1))
\bmot \\
&&\qquad \theta((x_4\rhd y_4)*_\rhd (\theta(x_5\rhd y_5)\rhd ((x_6*_\rhd y_6) \rhd z_2)))*_\rhd x_7*_\rhd y_7*_\rhd z_3 \\
&\stackrel{\eqref{eq:post-HAD-1},\eqref{anti-theta''''}}{=}&
(x_1\rhd y_1)((x_2*_\rhd y_2) \rhd z_1)
\bmot \theta((x_3\rhd y_3)((x_4*_\rhd y_4) \rhd z_2))*_\rhd x_5*_\rhd y_5*_\rhd z_3 \\
&\stackrel{\eqref{eq:post-HAD-4},\eqref{eq:GL-product}}{=}&
(x_1\rhd y_1)(x_2\rhd (y_2 \rhd z_1))
\bmot \theta((x_3\rhd y_3)(x_4\rhd (y_4 \rhd z_2)))*_\rhd x_5*_\rhd y_5*_\rhd z_3 \\
&\stackrel{\eqref{eq:post-HAD-3},\eqref{eq:GL-product}}{=}&
x_1\rhd (y_1*_\rhd z_1)
\bmot \theta(x_2\rhd (y_2 *_\rhd z_2))*_\rhd x_3*_\rhd y_3*_\rhd z_3 \\
&\stackrel{\eqref{eq:pha_ybo}}{=}& r(x\bmot y*_\rhd z).
\end{eqnarray*}
Conditions~\ref{eq:pb-d} and \ref{eq:pb-e} follows from
\begin{eqnarray*}
r(1_H \bmot x) &\stackrel{\eqref{eq:post-HAD-1''},\eqref{eq:pha_ybo}}{=}&  x_1\bmot \theta(x_2)*_\rhd x_3\stackrel{\eqref{Post-con'}}{=}
x_1\bmot \iota(\varepsilon(\theta(x_2)))\\
&=& x_1*_\rhd \iota(\varepsilon(\theta(x_2)))\bmot 1_H\stackrel{\eqref{anti-theta'''}}{=} x\bmot 1_H,\\[.5em]
r(x \bmot 1_H) &\stackrel{\eqref{eq:pha_ybo}}{=}&
x_1\rhd 1_H \bmot \theta(x_2\rhd 1_H)*_\rhd x_3\\
&\stackrel{\eqref{eq:post-HAD-1'}}{=}& \iota(\varepsilon(x_1)) \bmot \iota(\varepsilon(x_2))*_\rhd x_3\\
&=& 1_H \bmot \iota(\varepsilon(x_1))*_\rhd x_2 \ =\ 1_H\bmot x.
\end{eqnarray*}
All the braiding conditions have been checked. The proof is finished.
\end{proof}

\section{The universal enveloping algebras of post-Lie-Rinehart algebras}\label{sec:post-LR}

 In this section, first we recall the universal enveloping algebra of a Lie-Rinehart algebra. Then we introduce the notion of post-Lie-Rinehart algebras as the algebraic analog of post-Lie algebroids. We construct the free post-Lie-Rinehart algebra generated by a magma algebra with a fixed linear map to $\Der(R)$, which turns out to be an action post-Lie-Rinehart algebra. Finally we   show that the universal enveloping algebra of a post-Lie-Rinehart algebra  is a weak post-Hopf algebroid. In the particular case of action post-Lie-Rinehart algebras, the universal enveloping algebras are post-Hopf algebroids.

\subsection{Lie-Rinehart algebras and their universal enveloping algebras}

The structure of a Lie-Rinehart algebra appeared in Rinehart's work \cite{Ri} under the terminology of $(R,A)$-Lie algebra. Then, Huebschmann referred to such a structure a Lie-Rinehart algebra \cite{Hu90,Hu04}. Lie-Rinehart algebras are the algebraic counterpart of Lie algebroids, and have various important applications in Poisson geometry, symplectic geometry and quantization.

\begin{defn}\cite{Ri}
Let $\lL$ be an $\lR$-module.
A tuple $\mathscr{\lR}=(\lR,\lL,[-,-]_\lL,\rho)$ is called a {\bf Lie-Rinehart algebra}, if
\begin{enumerate}[(i)]
\item
$\lL$ is a Lie algebra with its bracket $[-,-]_\lL$.
\item
$\rho:\lL\to\Der(\lR)$ is an $\lR$-module homomorphism and a Lie algebra homomorphism, called the {\bf anchor}.
\item
the following compatibility condition holds:
\begin{eqnarray}\label{eq:LR}
[x,fy]_\lL &=& f[x,y]_\lL+\rho(x)(f)y,\quad\forall f\in \lR,\,x,y\in \lL.
\end{eqnarray}
\end{enumerate}
By convenience, we will write $\rho(x)(f)$ as $x(f)$ for short.
\end{defn}

\begin{exam}
Recall that a {\bf Lie algebroid} structure on a vector bundle $A\longrightarrow M$ is
a pair consisting of a Lie algebra structure $[-,-]_A$ on
the section space $\Gamma(A)$ and a vector bundle morphism
$a_A:A\longrightarrow TM$
from $A$ to the tangent bundle $TM$, called the {\bf anchor}, satisfying
\begin{equation}\label{eq:LAf}
[x,fy]_A=f[x,y]_A+a_A(x)(f)y,\quad \forall f\in
\CWM, x,y\in\Gamma(A),
\end{equation}
where $\CWM$ is the commutative associative algebra of smooth functions on the manifold $M$. Let $(A\longrightarrow M,[-,-]_A,a_A)$ be a Lie algebroid. Then the quadruple $(\CWM,\Gamma(A),[-,-]_A,a_A)$ is a Lie-Rinehart algebra.
\end{exam}

\begin{exam}
For the Lie algebra $\Der(\lR)$ of derivations on $\lR$, the triple $(\lR,\Der(\lR),\id)$ is a Lie-Rinehart algebra.
\end{exam}

\begin{exam}\label{exam:action-LR}
Let $\frakg$ be a Lie algebra with a Lie algebra homomorphism $\rho:\frakg\to \Der(\lR)$. Let $L=\lR\otimes\frakg$ as a free $\lR$-module. Define
\begin{eqnarray*}
[fx,gy]_\lL &=& fg[x,y]_\frakg+f\rho(x)(g)y-g\rho(y)(f)x,\quad\forall f,g\in \lR,\,x,y\in \frakg,
\end{eqnarray*}
and extend $\rho$ to be an $\lR$-module homomorphism $\overline{\rho}:\lL\to\Der(\lR)$. Then
the quadruple $(\lR,\lL,[-,-]_\lL,\overline{\rho})$ is a Lie-Rinehart algebra, and we call it an {\bf action Lie-Rinehart algebra}.
\end{exam}

\begin{defn}
A Lie algebra $\lL$ is called a {\bf Lie $\lR$-algebra} if $\lL$ is an $\lR$-module and its Lie bracket $[-,-]_\lL$ is $\lR$-linear. In other words, a Lie $\lR$-algebra is a Lie-Rinehart algebra with trivial anchor.
\end{defn}

Note that the authors in \cite{MM} defined the universal enveloping algebra of a Lie algebroid $(A\longrightarrow M,[-,-]_A,a_A)$ as the universal enveloping algebra of its underlying Lie-Rinehart algebra $(\CWM,\Gamma(A),[-,-]_A,a_A)$.
Now we recall the definition of the universal enveloping algebra of a Lie-Rinehart algebra.
\begin{defn}[\cite{MM}]
For a Lie-Rinehart algebra $\mathscr{\lR}=(\lR,\lL,[-,-]_\lL,\rho)$, the {\bf  universal enveloping algebra} of $\mathscr{\lR}$ is a triple $(\mathscr U_\lR(\lL),\iota_\lR,\iota_\lL)$, where $\mathscr U_\lR(\lL)$ is a unital algebra, $\iota_\lR:\lR\to \mathscr U_\lR(\lL)$ is a homomorphism of unital algebras and $\iota_\lL:\lL\to \mathscr U_\lR(\lL)$ is a homomorphism of Lie algebras
such that
$$\iota_\lR(f)\iota_\lL(x)=\iota_\lL(fx),\quad \iota_\lL(x)\iota_\lR(f)-\iota_\lR(f)\iota_\lL(x)=\iota_\lR(x(f)),\quad \forall f\in\lR, x\in \lL,$$
and satisfying the following universal property: if $A$ is a unital algebra, $\kappa_\lR:\lR\to A$ is a homomorphism of unital algebras and $\kappa_\lL:\lL\to A$ is a homomorphism of Lie algebras such that
$$\kappa_\lR(f)\kappa_\lL(x)=\kappa_\lL(fx),\quad \kappa_\lL(x)\kappa_\lR(f)-\kappa_\lR(f)\kappa_\lL(x)=\kappa_\lR(x(f)),\quad \forall f\in\lR, x\in \lL,$$
then there exists a unique homomorphism of unital algebras $\bar\kappa:\mathscr U_\lR(\lL)\to A$ such that $\bar\kappa \iota_\lR=\kappa_\lR$ and $\bar\kappa \iota_\lL=\kappa_\lL$.
\end{defn}

We have the following construction of the universal enveloping algebra of a Lie-Rinehart algebra.
\begin{prop}[{\cite[\S~3.2]{KS}}]
For a Lie-Rinehart algebra $\mathscr{\lR}=(\lR,\lL,[-,-]_\lL,\rho)$, let $T_\lR(\lL\otimes \lR)=\lR \oplus \bigoplus_{n\geq1}(\lL\otimes \lR)^{\otimes_\lR n}$ be the tensor $\lR$-algebra over the $\lR$-bimodule $\lL\otimes \lR$, and let $j_\lR:\lR\to T_\lR(\lL\otimes \lR)$ and $j_\lL:\lL\to T_\lR(\lL\otimes \lR)$ be the canonical embedding, i.e.
$$j_\lR(f)=f,\quad  j_\lL(x)\mapsto x\otimes 1_\lR.$$
 Define
\begin{equation}\label{eq:uea-LR}
\mathscr U_\lR(\lL)=T_\lR(\lL\otimes \lR)\bigg/\left({j_\lL(x)j_\lL(y)-j_\lL(y)j_\lL(x)-j_\lL([x,y]_\lL), \atop j_\lL(x)j_\lR(f) - j_\lR(f)j_\lL(x) - j_\lR(x(f))}\,\bigg|\,{x,y\in \lL,\atop f\in \lR}\right),
\end{equation}
and $j_\lR$ (resp. $j_\lL$) induces the map $\iota_\lR:\lR\to \mathscr U_\lR(\lL)$
(resp. $\iota_\lL:\lL\to \mathscr U_\lR(\lL)$). Then $(\mathscr U_\lR(\lL),\iota_\lR,\iota_\lL)$ is the universal enveloping algebra of $\mathscr{\lR}$.
\end{prop}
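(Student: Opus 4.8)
The plan is to verify that the quotient algebra $\mathscr U_\lR(\lL)$ defined by \eqref{eq:uea-LR}, together with the induced maps $\iota_\lR$ and $\iota_\lL$, satisfies the universal property in the definition. The whole argument follows the standard pattern by which a presentation by generators and relations realizes a universal object, resting on the universal property of the tensor $\lR$-algebra $T_\lR(\lL\otimes\lR)$ over the $\lR$-bimodule $\lL\otimes\lR$, whose left action is $f\cdot(x\otimes g)=(fx)\otimes g$ and whose right action is $(x\otimes g)\cdot h=x\otimes(gh)$.

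First I would record that $(\mathscr U_\lR(\lL),\iota_\lR,\iota_\lL)$ is itself a legitimate candidate, namely that it satisfies all the structural identities demanded of a universal enveloping algebra. That $\iota_\lR$ is a unital algebra map is built into the $\lR$-ring structure of the tensor algebra, since $\lR$ sits as the degree-zero part. The identity $\iota_\lR(f)\iota_\lL(x)=\iota_\lL(fx)$ already holds in $T_\lR(\lL\otimes\lR)$ before passing to the quotient, because $j_\lR(f)\,j_\lL(x)$ is the left $\lR$-action on the degree-one part, namely $(fx)\otimes 1_\lR=j_\lL(fx)$. The two remaining identities — that $\iota_\lL$ is a Lie algebra homomorphism and that $\iota_\lL(x)\iota_\lR(f)-\iota_\lR(f)\iota_\lL(x)=\iota_\lR(x(f))$ — are precisely the two families of relations quotiented out in \eqref{eq:uea-LR}, so they hold by construction.

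Next comes the existence half of the universal property. Given a target triple $(A,\kappa_\lR,\kappa_\lL)$ satisfying the analogous compatibilities, I would build the $\lR$-bimodule homomorphism $\phi:\lL\otimes\lR\to A$, $x\otimes f\mapsto\kappa_\lL(x)\kappa_\lR(f)$; the hypothesis $\kappa_\lR(f)\kappa_\lL(x)=\kappa_\lL(fx)$ makes $\phi$ respect the left action, while associativity in $A$ together with $\kappa_\lR$ being an algebra map handles the right action. By the universal property of the tensor $\lR$-algebra, $\phi$ extends uniquely to an $\lR$-ring homomorphism $\tilde\kappa:T_\lR(\lL\otimes\lR)\to A$ with $\tilde\kappa j_\lL=\kappa_\lL$ and $\tilde\kappa j_\lR=\kappa_\lR$. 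Evaluating $\tilde\kappa$ on the two generating families of the relation ideal yields $\kappa_\lL(x)\kappa_\lL(y)-\kappa_\lL(y)\kappa_\lL(x)-\kappa_\lL([x,y]_\lL)$ and $\kappa_\lL(x)\kappa_\lR(f)-\kappa_\lR(f)\kappa_\lL(x)-\kappa_\lR(x(f))$, both of which vanish by the assumptions on $\kappa_\lL$ and $\kappa_\lR$; since these elements generate a two-sided ideal and $\tilde\kappa$ is an algebra homomorphism, $\tilde\kappa$ annihilates the whole ideal and descends to $\bar\kappa:\mathscr U_\lR(\lL)\to A$ with $\bar\kappa\iota_\lR=\kappa_\lR$ and $\bar\kappa\iota_\lL=\kappa_\lL$.

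For uniqueness, I would observe that $\mathscr U_\lR(\lL)$ is generated as an algebra by $\iota_\lR(\lR)$ and $\iota_\lL(\lL)$: every degree-one element $x\otimes f$ of the tensor algebra equals $j_\lL(x)\,j_\lR(f)$, so the two canonical maps generate $T_\lR(\lL\otimes\lR)$ and a fortiori its quotient. Hence any algebra homomorphism out of $\mathscr U_\lR(\lL)$ is determined by its restrictions to those two images, which forces $\bar\kappa$ to be unique. The only place requiring genuine care — and the closest thing to an obstacle — is the bookkeeping of the two-sided $\lR$-bimodule structure on $\lL\otimes\lR$ and the check that $\phi$ is a bimodule map, not merely left- or right-linear; once that is pinned down, every remaining step is a direct substitution.
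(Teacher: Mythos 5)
Your proposal is correct and complete: the paper itself gives no proof of this proposition (it is quoted from \cite[\S~3.2]{KS}), and your argument is precisely the standard one used there — realize the universal object by generators and relations, check that the two compatibility identities either hold automatically in $T_\lR(\lL\otimes\lR)$ (namely $j_\lR(f)j_\lL(x)=j_\lL(fx)$, via the left action) or are imposed by the quotient, then obtain $\bar\kappa$ from the universal property of the tensor $\lR$-algebra applied to the $\lR$-bimodule map $x\otimes f\mapsto\kappa_\lL(x)\kappa_\lR(f)$, and get uniqueness from the fact that $\iota_\lR(\lR)$ and $\iota_\lL(\lL)$ generate the quotient since $x\otimes f=j_\lL(x)j_\lR(f)$. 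Your identification of the bimodule structure on $\lL\otimes\lR$ (left action through the $\lR$-module structure of $\lL$, right action by multiplication on the $\lR$ factor) is exactly the point that makes the construction work, and you verified it correctly.
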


\begin{remark}
The structural map $\iota_\lR:\lR\to \mathscr U_\lR(\lL)$ is injective, while
$\iota_\lL:\lL\to \mathscr U_\lR(\lL)$ is also injective if $\lL$ is $\lR$-projective, by the PBW theorem proven in~\cite[Theorem~3.1]{Ri}.
\end{remark}

For the universal enveloping algebra $\mathscr U_\lR(\lL)$ of a Lie-Rinehart algebra $\mathscr{\lR}=(\lR,\lL,[-,-]_\lL,\rho)$, using its left $\lR$-module structure to define the tensor product $\otimes_\lR$, then we have the comultiplication  as a left $\lR$-module map and a $\bk$-algebra homomorphism
$$\Delta:\mathscr U_\lR(\lL)\to \mathscr U_\lR(\lL)\,\bar{\otimes}_\lR\, \mathscr U_\lR(\lL),\ f\mapsto f\otimes_\lR 1_\lR,\ x\mapsto x\otimes_\lR 1_\lR + 1_\lR\otimes_\lR x,\quad \forall f\in \lR,\,x\in\lL,$$
where $\mathscr U_\lR(\lL)\,\bar{\otimes}_\lR\, \mathscr U_\lR(\lL)
$ is the kernel of the map
$$\vartheta:\mathscr U_\lR(\lL)\otimes_\lR \mathscr U_\lR(\lL)\to \Hom(\lR,\mathscr U_\lR(\lL)\otimes_\lR \mathscr U_\lR(\lL)),\ X\otimes_\lR Y\mapsto(f\mapsto Xf\otimes_\lR Y- X\otimes_\lR Yf).$$
The counit  $\varepsilon: \mathscr U_\lR(\lL) \to \lR$ is the left $\lR$-module map such that
$$\varepsilon(f)=f,\quad \varepsilon(x)=0,\quad \varepsilon(XY)=\varepsilon(X\varepsilon(Y)),\quad \forall f\in \lR,\,x\in\lL,\,X,Y\in\mathscr U_\lR(\lL).$$
Then $(\mathscr U_\lR(\lL),\cdot,\Delta,\varepsilon,\iota_\lR)$ is a bialgebroid over $\lR$, also called an {\bf $\lR/\bk$-bialgebra} in~\cite{MM} or a {\bf left bialgebroid} over $\lR$ in~\cite{BKS}.

\begin{prop}\label{prop:uea-LRA}
Let $\lL$ be a Lie $\lR$-algebra with Lie bracket $[-,-]_\lL$. Then the universal enveloping algebra $\mathscr U_\lR(\lL)$ of $L$ is a Hopf $R$-algebra.
\end{prop}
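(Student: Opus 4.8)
The plan is to exhibit explicit candidate structure maps on $\mathscr U_\lR(\lL)$ and verify the axioms of Definition~\ref{defn:Hopf-R-algebra}. Since $\lL$ is a Lie $\lR$-algebra, its anchor $\rho$ is trivial, so the defining relation $\iota_\lL(x)\iota_\lR(f)-\iota_\lR(f)\iota_\lL(x)=\iota_\lR(x(f))$ degenerates to $\iota_\lL(x)\iota_\lR(f)=\iota_\lR(f)\iota_\lL(x)$. This immediately forces $\iota_\lR(\lR)$ to commute with all generators $\iota_\lL(x)$, hence $\iota_\lR(\lR)\subseteq Z(\mathscr U_\lR(\lL))$. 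This is the crucial simplification: it places us squarely in the $\lR$-bialgebra situation described in the remark after Definition~\ref{defi:Hopf-algebroid}, so $\mathscr U_\lR(\lL)\otimes_\lR\mathscr U_\lR(\lL)$ is genuinely an $\lR$-algebra, and $\Delta,\varepsilon$ become honest $\lR$-algebra homomorphisms rather than merely maps into $\Delta(H)$.

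Next I would define the antipode. Because $\lL$ is $\lR$-linear, the universal property can be invoked with the \emph{opposite} algebra: I would set up an $\lR$-algebra map from $\mathscr U_\lR(\lL)$ to $\mathscr U_\lR(\lL)^{\mathrm{op}}$ determined on generators by $S(\iota_\lR(f))=\iota_\lR(f)$ and $S(\iota_\lL(x))=-\iota_\lL(x)$. To apply the universal property I must check that the pair $(\kappa_\lR,\kappa_\lL)=(\iota_\lR,-\iota_\lL)$ satisfies the two compatibility relations when $A=\mathscr U_\lR(\lL)^{\mathrm{op}}$; the Lie-algebra-homomorphism condition for $-\iota_\lL$ into the opposite algebra holds precisely because the sign reversal matches the bracket reversal, and the anchor relation holds trivially since the anchor vanishes. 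This produces an algebra anti-homomorphism $S$ which is manifestly $\lR$-linear and fixes $\iota_\lR$, giving axiom~(a) and $\lR$-linearity of $S$ for free.

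Then I would verify the antipode axioms (b) and (c) together with $\varepsilon S=\varepsilon$. The standard approach is to check these identities on the algebra generators $\iota_\lR(f)$ and $\iota_\lL(x)$ and then argue they propagate to products. On $\iota_\lR(f)$ everything is immediate since $\Delta(\iota_\lR(f))=\iota_\lR(f)\otimes_\lR 1$ and $S$ fixes it; on $\iota_\lL(x)$ one computes $m(S\otimes\id)\Delta(\iota_\lL(x))=S(\iota_\lL(x))+S(1)\iota_\lL(x)=-\iota_\lL(x)+\iota_\lL(x)=0=\iota_\lR\varepsilon(\iota_\lL(x))$, and symmetrically for (c). The propagation to products uses that $\Delta$ and $\varepsilon$ are $\lR$-algebra homomorphisms and that $S$ is an anti-homomorphism, a routine induction. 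The condition $\varepsilon S=\varepsilon$ is likewise checked on generators. Finally, assembling these verifications confirms that $(\mathscr U_\lR(\lL),\Delta,\varepsilon,S,\iota_\lR)$ is a Hopf algebroid over $\lR$ satisfying the extra hypotheses of Definition~\ref{defn:Hopf-R-algebra}, so it is a Hopf $\lR$-algebra.

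The main obstacle I anticipate is not any single axiom but the bookkeeping around the tensor product $\otimes_\lR$: one must be careful that $\Delta$ lands in the correct Takeuchi-type subalgebra $\mathscr U_\lR(\lL)\,\bar\otimes_\lR\,\mathscr U_\lR(\lL)$ and that coassociativity and the counit axioms are formulated correctly over the noncommutative total algebra. Once centrality of $\iota_\lR(\lR)$ is established, however, this subtlety largely dissolves because $\otimes_\lR$ becomes a tensor product of $\lR$-algebras in the usual sense; so I expect the genuinely load-bearing step to be the clean invocation of the universal property to \emph{construct} $S$ as a well-defined anti-homomorphism, with the remaining verifications being mechanical checks on generators. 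Alternatively, one could shortcut the entire argument by appealing to Lemma~\ref{lem:Hopf-R-algebra}: it suffices to show $\mathscr U_\lR(\lL)$ is a cocommutative Hopf algebroid with $\iota_\lR(\lR)\subseteq Z(\mathscr U_\lR(\lL))$, and cocommutativity is evident from the symmetric coproduct on generators while centrality was noted above, reducing the proposition to exhibiting \emph{some} antipode, after which Lemma~\ref{lem:Hopf-R-algebra} upgrades it automatically to a Hopf $\lR$-algebra.
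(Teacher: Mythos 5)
Your proposal is correct, and its overall strategy coincides with the paper's: both proofs hinge on the observation that a trivial anchor forces $\iota_\lR(\lR)\subseteq Z(\mathscr U_\lR(\lL))$, and both take the antipode to be the anti-homomorphism fixing $\iota_\lR(f)$ and sending $\iota_\lL(x)$ to $-\iota_\lL(x)$. Where you differ is in how $S$ is shown to be well defined. The paper first identifies $\mathscr U_\lR(\lL)$ with the quotient $T_\lR(\lL)/\left(j_\lL(x)j_\lL(y)-j_\lL(y)j_\lL(x)-j_\lL([x,y]_\lL)\right)$ of the tensor $\lR$-algebra over the $\lR$-module $\lL$ --- a collapse of the general presentation \eqref{eq:uea-LR} available precisely because the anchor vanishes --- and then reads off $S$ as the standard $\lR$-algebra anti-isomorphism of this classical enveloping algebra. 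You instead invoke the universal property of $\mathscr U_\lR(\lL)$ with target $\mathscr U_\lR(\lL)^{\mathrm{op}}$ and the pair $(\kappa_\lR,\kappa_\lL)=(\iota_\lR,-\iota_\lL)$, whose hypotheses hold exactly by centrality and the vanishing anchor (your sign check for the Lie condition in the opposite algebra is right). The two devices are interchangeable: yours avoids re-deriving the simplified presentation, while the paper's makes the anti-automorphism property and the involutivity of $S$ transparent by inspection. Your generator-by-generator verification of the antipode axioms and of $\varepsilon S=\varepsilon$, with propagation to products using centrality and the fact that $\Delta$ and $\varepsilon$ become $\lR$-algebra homomorphisms, is sound (the paper omits this as standard), and your closing shortcut via Lemma~\ref{lem:Hopf-R-algebra} is a legitimate alternative ending.

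One loose end: Definition~\ref{defi:Hopf-algebroid} requires an algebra anti-\emph{isomorphism}, and your universal-property argument as written only produces an anti-homomorphism. This is immediately repaired: $S^2$ is an algebra endomorphism of $\mathscr U_\lR(\lL)$ fixing $\iota_\lR(f)$ and $\iota_\lL(x)$ for all $f\in\lR$, $x\in\lL$, so the uniqueness clause of the universal property gives $S^2=\id$, whence $S$ is bijective. You should say this explicitly, since bijectivity of $S$ is part of the axioms you are verifying.
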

\begin{proof}
Note that a Lie $\lR$-algebra is a Lie-Rinehart algebra with trivial anchor.
So $\iota_\lR(\lR)$ lies in the center of $\mathscr U_\lR(\lL)$ according to Eq.~\eqref{eq:uea-LR}.
Let $T_\lR(\lL)=\lR \oplus \bigoplus_{n\geq1}\lL^{\otimes_\lR n}$ be the tensor $\lR$-algebra over the $\lR$-module $\lL$ and let $j_\lL:\lL\to T_\lR(\lL)$
 be the canonical embedding. We have the following $\lR$-algebra isomorphism
$$\mathscr U_\lR(\lL)\cong T_\lR(\lL)/\left(j_\lL(x)j_\lL(y)-j_\lL(y)j_\lL(x)-j_\lL([x,y]_\lL)\,|\,x,y\in \lL\right).$$
Hence, we can define the antipode map
$$S: \mathscr U_\lR(\lL) \to \mathscr U_\lR(\lL),\ \ f\mapsto f,\ x\mapsto -x,\quad \forall f\in \lR,\,x\in\lL$$
as an $\lR$-algebra anti-isomorphism, and $\mathscr U_\lR(\lL)$ becomes a Hopf $R$-algebra.
\end{proof}

As a special example of action Hopf algebroids defined in Theorem~\ref{prop:smash-prod}, we have
\begin{exam}\label{ex:uea_action_LR}
Let $\mathscr{\lR}$ be the action Lie-Rinehart algebra $(\lR,\lL=\lR\otimes \frakg,\rho)$ given in Example~\ref{exam:action-LR}. Since $\rho$ naturally defines a module algebra action $\rightharpoonup$ of $\mathscr U(\frakg)$ over $\lR$, the universal enveloping algebra
$(\mathscr U_\lR(\lL),\iota_\lR,\iota_\lL)$ of $\mathscr{\lR}$
is a cocommutative Hopf algebroid over $\lR$ isomorphic to the action Hopf algebroid $\lR\# \mathscr U(\frakg)$ over $\lR$.
\end{exam}

\subsection{Post-Lie-Rinehart algebras and free objects}
\label{sec:free_Post-Lie-Rinehart}
In this subsection, we introduce the notion of a post-Lie-Rinehart algebra as the algebraic counterpart of a post-Lie algebroid introduced in \cite{ML}, and study its free object.

The notion of a post-Lie algebra was introduced by Vallette from his study of Koszul duality of operads in \cite{Val}.

\begin{defn}\cite{Val}
A {\bf post-Lie algebra} $(\lL,[-,-]_\lL,\rhd)$ consists of a Lie algebra $(\lL,[-,-]_\lL)$ and a binary product $\rhd:\lL\otimes\lL\to\lL$ such that
\begin{eqnarray}
\label{Post-L-1}x\rhd[y,z]_\lL&=&[x\rhd y,z]_\lL+[y,x\rhd z]_\lL,\\
\label{Post-L-2}([x,y]_\lL+x\rhd y-y\rhd x)\rhd z&=&x\rhd(y\rhd z)-y\rhd(x\rhd z).
\end{eqnarray}

A {\bf post-Lie algebra homomorphism} between two post-Lie algebras is a Lie algebra homomorphism compatible with post-Lie products.
 \end{defn}

Any post-Lie algebra $(\lL,[-,-]_\lL,\rhd)$ has a {\bf sub-adjacent Lie algebra}
$\lL_\rhd:=(\lL,[-,-]_{ \rhd})$
defined by
$$[x,y]_\rhd \coloneqq x\rhd y-y\rhd x+[x,y]_\lL,\quad\forall x,y\in\lL.$$

The notion of a post-Lie algebroid was introduced by Munthe-Kaas and Lundervold in their study of numerical integration \cite{ML}. Now we introduce the notion of a post-Lie-Rinehart algebra. The relationship between post-Lie-Rinehart algebras and post-Lie algebroids is similar to the relationship between Lie-Rinehart algebras and Lie algebroids.

\begin{defn}
Let $\lL$ be a Lie $\lR$-algebra with Lie bracket $[-,-]_\lL$ and $\rhd:L\otimes L\to L$ be a bilinear map.
A tuple $\mathscr{\lR}=(\lR,\lL,[-,-]_\lL,\rho,\rhd)$ is called a {\bf post-Lie-Rinehart algebra}, if
\begin{enumerate}[(i)]
\item
$(\lL,[-,-]_\lL,\rhd)$ is a post-Lie algebra,
\item
$\rho:\lL_\rhd\to\Der(\lR)$ is an $\lR$-module homomorphism and a Lie algebra homomorphism for the sub-adjacent Lie algebra $\lL_\rhd$ of $(L,[-,-]_\lL,\rhd)$,
\item
the following compatibility conditions hold:
\begin{eqnarray}
\label{eq:PLR-1}
fx\rhd y &=& f(x\rhd y),\\
\label{eq:PLR-2}
x\rhd fy &=& f(x\rhd y)+x(f)y
\end{eqnarray}
for any $f\in \lR,\,x,y\in \lL$, where $x(f)=\rho(x)(f)$ as before.
\end{enumerate}
A {\bf post-Lie-Rinehart algebra homomorphism} between two post-Lie-Rinehart algebras is an $\lR$-module homomorphism and a post-Lie algebra homomorphism compatible with anchors.
\end{defn}

\begin{prop}
Given a post-Lie-Rinehart algebra $\mathscr{\lR}=(\lR,\lL,[-,-]_\lL,\rho,\rhd)$, the tuple $\mathscr{\lR}_\rhd=(\lR,\lL,[-,-]_{\rhd},\rho)$ is a Lie-Rinehart algebra, which is called the {\bf sub-adjacent Lie-Rinehart algebra} of $\mathscr{\lR}$.
\end{prop}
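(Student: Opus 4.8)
The plan is to check the three defining conditions of a Lie-Rinehart algebra for the tuple $\mathscr{\lR}_\rhd = (\lR, \lL, [-,-]_\rhd, \rho)$ one at a time, observing that two of them are inherited almost verbatim from the post-Lie-Rinehart structure, so that the only genuine computation is the Leibniz-type compatibility. For condition (i), that $(\lL, [-,-]_\rhd)$ is a Lie algebra, I would simply invoke the sub-adjacent Lie algebra construction for the underlying post-Lie algebra $(\lL, [-,-]_\lL, \rhd)$ recalled above: antisymmetry is immediate from the definition of $[-,-]_\rhd$, and the Jacobi identity follows from the post-Lie axioms \eqref{Post-L-1} and \eqref{Post-L-2}. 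Condition (ii), that $\rho : \lL_\rhd \to \Der(\lR)$ is simultaneously an $\lR$-module homomorphism and a Lie algebra homomorphism, is literally part (ii) of the hypothesis that $\mathscr{\lR}$ is a post-Lie-Rinehart algebra, so nothing further is needed.

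The substantive step is condition (iii), the identity $[x, fy]_\rhd = f[x,y]_\rhd + x(f) y$ for all $f \in \lR$ and $x, y \in \lL$. I would expand the left-hand side via the definition $[x, fy]_\rhd = x \rhd (fy) - (fy) \rhd x + [x, fy]_\lL$ and then substitute the three available compatibility relations: $x \rhd (fy) = f(x \rhd y) + x(f) y$ from \eqref{eq:PLR-2}, $(fy) \rhd x = f(y \rhd x)$ from \eqref{eq:PLR-1}, and $[x, fy]_\lL = f[x,y]_\lL$, which holds because $\lL$ is by assumption a Lie $\lR$-algebra with $\lR$-linear bracket. Collecting the common factor $f$ across the three resulting terms reconstitutes $f(x \rhd y - y \rhd x + [x,y]_\lL) = f[x,y]_\rhd$, while the single derivation term $x(f) y$ survives untouched, yielding exactly the desired identity.

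I do not anticipate any serious obstacle: the result is a direct unfolding of the axioms. The only points deserving care are, first, that the bracket $[-,-]_\lL$ is genuinely $\lR$-linear with no Leibniz correction, since $\lL$ is a Lie $\lR$-algebra, so that the sole derivation-type contribution to the final expression arises from the post-Lie product through \eqref{eq:PLR-2}; and second, that $x(f)$ throughout abbreviates $\rho(x)(f)$ with $\rho$ regarded as the anchor on the sub-adjacent Lie algebra $\lL_\rhd$, which is consistent precisely because condition (ii) in the definition of a post-Lie-Rinehart algebra places $\rho$ on $\lL_\rhd$ rather than on $\lL$ with its original bracket.
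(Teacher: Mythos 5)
Your proposal is correct and follows essentially the same route as the paper: the paper likewise notes that only the compatibility condition \eqref{eq:LR} needs checking, and verifies it by expanding $[x,fy]_\rhd$ via the definition of the sub-adjacent bracket, applying \eqref{eq:PLR-1}, \eqref{eq:PLR-2}, and the $\lR$-linearity of $[-,-]_\lL$, then collecting terms. Your additional remarks on why conditions (i) and (ii) are automatic simply make explicit what the paper leaves implicit.
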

\begin{proof}
By definition, we only need to check the compatibility condition~\eqref{eq:LR} as follows:
\begin{eqnarray*}
[x,fy]_{\rhd} &=& x \rhd fy - fy \rhd x + [x,fy]_L\\
&\stackrel{\eqref{eq:PLR-1},\eqref{eq:PLR-2}}{=}& f(x\rhd y) +x(f)y - f(y\rhd x)  + f[x,y]_L\\
&=& f[x,y]_{\rhd}+x(f)y
\end{eqnarray*}
for any $f\in \lR$ and $x,y\in \lL$.
\end{proof}

\begin{exam}\label{ex:post-Lie-Rinehart}
  Recall from \cite{ML} that a {\bf post-Lie algebroid} structure on a vector bundle
$A\longrightarrow M$ is a triple that consists of a $C^\infty(M)$-linear Lie
algebra structure $[-,-]_A$ on $\Gamma(A)$, a  bilinear operation   $\triangleright_A:\Gamma(A)\otimes \Gamma(A)\longrightarrow \Gamma(A)$   and a
vector bundle morphism $a_A:A\longrightarrow TM$, called the {\bf anchor},
such that $(\Gamma(A),[-,-]_A,\triangleright_A)$ is a post-Lie algebra, and
for all $f\in\CWM$ and $u,v\in\Gamma(A)$, the following
relations are satisfied:
\begin{itemize}
\item[\rm(i)]$u\triangleright_A(fv)=f(u\triangleright_A v)+a_A(u)(f)v,$
\item[\rm(ii)] $(fu)\triangleright_A v=f(u\triangleright_A v).$
\end{itemize}
It is straightforward to check that $(\CWM,\Gamma(A),[-,-]_A,a_A,\triangleright_A)$ is a post-Lie-Rinehart algebra.
\end{exam}

As a generalization of \cite[Theorem 2.7]{LST}, there is a post-Lie-Rinehart algebra structure on the space of primitive elements of a weak post-Hopf algebroid.

Given a weak post-Hopf algebroid $(H,\iota,\rhd)$ over $\lR$, let
\begin{eqnarray*}
P(H) &\coloneqq& \{x\in H\,|\,\Delta(x)=x\otimes_\lR 1_H + 1_H\otimes_\lR x\}.
\end{eqnarray*}
Since $H$ is a Hopf $\lR$-algebra, it is straightforward to see that $(P(H),[-,-])$ is a Lie $R$-algebra with the commutator as its Lie bracket. By the same argument as \cite[Theorem 2.7]{LST}, the product $\rhd$ can be restricted to $P(H)$ such that $(P(H),[-,-],\rhd)$ is a post-Lie algebra. Moreover, we have the following result.

\begin{prop}
Let $(H,\iota,\rhd)$ be a weak post-Hopf algebroid  over $\lR$.
Then $(\lR,P(H),[-,-],\rho,\rhd)$ is a post-Lie-Rinehart algebra, where
the anchor $\rho$ is given by~ Eq. \eqref{eq:left-action}, i.e.
\begin{eqnarray*}
\rho(x)(f) &=& \varepsilon(x\rhd \iota(f)),\quad f\in\lR,\ x\in P(H).
\end{eqnarray*}
\end{prop}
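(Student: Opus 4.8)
The plan is to verify the three axioms of a post-Lie-Rinehart algebra for the tuple $(\lR,P(H),[-,-],\rho,\rhd)$, taking for granted the already-established facts that $(P(H),[-,-],\rhd)$ is a post-Lie algebra and that $\rho$ defined by $\rho(x)(f)=\varepsilon(x\rhd\iota(f))$ is the $H_\rhd$-module algebra action on $\lR$ from Corollary~\ref{coro:left-action} restricted to primitive elements. Since condition (i) is given, the work concentrates on conditions (ii) and (iii), namely that $\rho$ is an $\lR$-linear Lie algebra homomorphism from $\lL_\rhd=(P(H),[-,-]_\rhd)$ into $\Der(\lR)$, and that the two compatibility identities \eqref{eq:PLR-1} and \eqref{eq:PLR-2} hold.

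First I would record how the coproduct behaves on a primitive element: for $x\in P(H)$ we have $\Delta(x)=x\otimes_\lR 1_H+1_H\otimes_\lR x$, so that $x_1\otimes_\lR x_2$ may be replaced by this two-term sum throughout. I would first check \eqref{eq:PLR-1}, $fx\rhd y=f(x\rhd y)$, which is immediate from \eqref{eq:post-HAD-2}. For \eqref{eq:PLR-2}, $x\rhd fy=f(x\rhd y)+x(f)y$, I would expand using \eqref{eq:post-HAD-3} with the product $\iota(f)y$, writing $x\rhd\iota(f)y=(x_1\rhd\iota(f))(x_2\rhd y)$; substituting the primitive coproduct of $x$ gives two terms, namely $(x\rhd\iota(f))(1_H\rhd y)+(1_H\rhd\iota(f))(x\rhd y)$. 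Using \eqref{eq:post-HAD-1''} and \eqref{eq:post-HAD-5'}, the first term becomes $\iota(\varepsilon(x\rhd\iota(f)))\,y=\rho(x)(f)\,y=x(f)y$ and the second becomes $f(x\rhd y)$, which is exactly \eqref{eq:PLR-2}.

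Next I would treat condition (ii). That $\rho$ is $\lR$-linear follows from \eqref{eq:post-HAD-2} applied inside the counit, since $\rho(fx)(g)=\varepsilon(fx\rhd\iota(g))=\varepsilon(f(x\rhd\iota(g)))=f\varepsilon(x\rhd\iota(g))=f\rho(x)(g)$. To see that each $\rho(x)$ is a derivation of $\lR$, I would compute $\rho(x)(fg)=\varepsilon(x\rhd\iota(fg))=\varepsilon(x\rhd\iota(f)\iota(g))$ and expand by \eqref{eq:post-HAD-3} together with the primitive coproduct, obtaining two terms whose counits reproduce the Leibniz rule $\rho(x)(f)g+f\rho(x)(g)$; here one uses \eqref{eq:post-HAD-5'} to pull the $\iota$-factors out and the fact that $\varepsilon$ is multiplicative on $H_\rhd$ as in the proof of Theorem~\ref{thm:GL-bialgebroid}. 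Finally, that $\rho$ is a Lie algebra homomorphism with respect to $[-,-]_\rhd$ is essentially the statement that $\rho$ is an $H_\rhd$-module algebra action: for $x,y\in P(H)$ one has $[x,y]_\rhd=x*_\rhd y-y*_\rhd x$ on primitives, and $\rho([x,y]_\rhd)=\rho(x)\rho(y)-\rho(y)\rho(x)$ follows from the module-action property $\rho(x*_\rhd y)=\rho(x)\rho(y)$ guaranteed by Corollary~\ref{coro:left-action}.

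The main obstacle I expect is bookkeeping rather than conceptual difficulty: one must be careful that all identities involving $\rho$ are derived from the \emph{counit-composed} action $\varepsilon(x\rhd\iota(-))$ and that the passage between $[-,-]_\rhd$ and the Grossman-Larson product $*_\rhd$ is legitimate on primitive elements, where the sub-adjacent bracket reduces to the commutator for $*_\rhd$. In particular, verifying that $[x,y]_\rhd$ actually lies in $P(H)$ and agrees with $x*_\rhd y-y*_\rhd x$ requires invoking the primitivity-preserving behavior of both $\rhd$ and $*_\rhd$, which in turn rests on \eqref{eq:post-HAD-1} and \eqref{eq:111}; once these compatibilities are in place, the homomorphism property of $\rho$ is a direct consequence of the module algebra structure already established in Theorem~\ref{thm:GL-bialgebroid} and Corollary~\ref{coro:left-action}.
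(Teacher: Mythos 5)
Your proposal is correct, and most of it coincides with the paper's own proof: the verification of \eqref{eq:PLR-1} directly from \eqref{eq:post-HAD-2}, the derivation of \eqref{eq:PLR-2} by expanding $x\rhd \iota(f)y$ with \eqref{eq:post-HAD-3} and then using \eqref{eq:post-HAD-1''} and \eqref{eq:post-HAD-5'}, the $\lR$-linearity of $\rho$ via \eqref{eq:post-HAD-2}, and the Leibniz rule for $\rho(x)$ are exactly the paper's steps. The one place where you take a genuinely different route is the proof that $\rho$ is a Lie algebra homomorphism into $\Der(\lR)$. The paper does this by a direct computation: it uses \eqref{eq:post-HAD-2'} to rewrite $\rho(x)(\rho(y)(f))$ as $\varepsilon(x\rhd(y\rhd\iota(f)))$, then \eqref{eq:post-HAD-4} together with \eqref{eq:post-HAD-1''} and primitivity to convert $x\rhd(y\rhd\iota(f))$ into $(xy+x\rhd y)\rhd\iota(f)$, and antisymmetrizes to land on $\varepsilon([x,y]_\rhd\rhd\iota(f))$. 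You instead factor this step through Corollary~\ref{coro:left-action}: since $x\mapsto \varepsilon(x\rhd\iota(-))$ is an $H_\rhd$-module algebra action on $\lR$, and since for primitive $x,y$ one has $x*_\rhd y = xy + x\rhd y$ and hence $[x,y]_\rhd = x*_\rhd y - y*_\rhd x$, the homomorphism property follows at once from $\rho(x*_\rhd y)=\rho(x)\circ\rho(y)$. This shortcut is legitimate --- both ingredients (closure of $[-,-]_\rhd$ on $P(H)$ and its identification with the $*_\rhd$-commutator, via \eqref{eq:post-HAD-1} and \eqref{eq:111}; and the composition property of the action, established in Theorem~\ref{thm:GL-bialgebroid} and Corollary~\ref{coro:left-action}) are available at this point in the paper. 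What your packaging buys is conceptual economy: the force of \eqref{eq:post-HAD-4} and \eqref{eq:post-HAD-2'} is invoked only once, inside the already-proved structural results, rather than being re-derived inline; what the paper's computation buys is self-containedness, since it does not lean on the (tersely proved) corollary. Either way the argument is complete.
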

\begin{proof}
First note $\rho(x)\in\Der(\lR)$ for any $x\in P(H)$ by conditions \eqref{eq:post-HAD-1''} and \eqref{eq:post-HAD-3}.
It remains to show that $\rho$ is an $\lR$-module homomorphism and a Lie algebra homomorphism, and Eqs.~\eqref{eq:PLR-1} and \eqref{eq:PLR-2} hold. The fact that $\rho$ is an $\lR$-module homomorphism follows from
$$
\rho(fx)(g) = \varepsilon(fx\rhd \iota(g))
\ \stackrel{\eqref{eq:post-HAD-2}}{=}\ f\varepsilon(x\rhd \iota(g))\ =\ f\rho(x)(g),\quad \forall f, g\in \lR, ~x\in P(H).
$$
For all $f\in\lR$ and $x,y\in P(H)$, we have
\begin{eqnarray*}
{[\rho(x),\rho(y)](f)} &=& \rho(x)(\rho(y)(f)) - \rho(y)(\rho(x)(f)) \\
&=& \varepsilon(x\rhd \iota(\varepsilon(y\rhd \iota(f)))) - \varepsilon(y\rhd \iota(\varepsilon(x\rhd \iota(f))))\\
&\stackrel{\eqref{eq:post-HAD-2'}}{=}& \varepsilon(x\rhd (y\rhd \iota(f))) - \varepsilon(y\rhd (x\rhd \iota(f)))\\
&\stackrel{\eqref{eq:post-HAD-1''},\,\eqref{eq:post-HAD-4}}{=}& \varepsilon((xy+x\rhd y)\rhd \iota(f)) - \varepsilon((yx+y\rhd x)\rhd \iota(f))\\
&=& \varepsilon(([x,y] + x\rhd y - y\rhd x) \rhd \iota(f))\\
&=& \varepsilon([x,y]_\rhd \rhd \iota(f))\ \\
&=&\rho([x,y]_\rhd)(f),
\end{eqnarray*}
which implies that $\rho$ is  a Lie algebra homomorphism.

On the other hand, Eq.~\eqref{eq:post-HAD-2} clearly implies Eq.~\eqref{eq:PLR-1}. Also, we have
\begin{eqnarray*}
x\rhd fy \stackrel{\eqref{eq:post-HAD-3}}{=} (x\rhd \iota(f))y + \iota(f)(x\rhd y) \stackrel{\eqref{eq:post-HAD-5'}}{=}  f(x \rhd y) +
\varepsilon(x\rhd \iota(f))y = f(x \rhd y) + \rho(x)(f)y.
\end{eqnarray*}
Namely, Eq.~\eqref{eq:PLR-2} holds.
Hence, $(\lR,P(H),[-,-],\rho,\rhd)$ is a post-Lie-Rinehart algebra.
\end{proof}

Next we introduce a useful construction of post-Lie-Rinehart algebras, originally given in \cite{GLS} via a geometric terminology, namely action post-Lie algebroids.
\begin{prop}\label{thm:post-LR-derivation}
Let $(\frakg,[-,-]_\frakg,\rhd)$ be a post-Lie algebra with its sub-adjacent Lie algebra $\frakg_\rhd$. Let $\rho:\frakg_\rhd\to \Der(\lR)$ be a Lie algebra homomorphism. Then we have an extended post-Lie algebra structure on $\lL=\lR\otimes \frakg$ defined as follows,
\begin{eqnarray}
\label{eq:action-post-LR-1}
[f_1x_1, f_2x_2]_\lL &=& f_1f_2[x_1,x_2]_\frakg,\\
\label{eq:action-post-LR-2}
f_1x_1 \,\overline{\rhd}\, f_2x_2 &=& f_1x_1(f_2)x_2 + f_1f_2(x_1\rhd x_2)
\end{eqnarray}
for any $f_1,f_2\in \lR$ and $x_1,x_2\in \frakg$, where we write $\rho(x)(f)$ simply as $x(f)$.

Extend $\rho:\frakg\to\Der(\lR)$ to be a linear map  $\overline{\rho}:\lL\to\Der(\lR)$ defined by
\begin{eqnarray*}
\overline{\rho}(fx)(f')&=& fx(f'),\quad f,f'\in \lR,\ x\in \frakg.
\end{eqnarray*}
Then, the tuple $(\lR,\lL,[-,-]_\lL,\overline{\rho},\overline{\rhd})$ is a post-Lie-Rinehart algebra, and we call it an {\bf action post-Lie-Rinehart algebra}.

The corresponding sub-adjacent Lie-Rinehart algebra $\lL_{\overline{\rhd}}\coloneqq(\lR,\lL,[-,-]_{\overline{\rhd}},\overline{\rho})$ is given by
\begin{eqnarray*}
[f_1x_1, f_2x_2]_{\overline{\rhd}} &=& f_1x_1\,\overline{\rhd}\, f_2x_2 - f_2x_2\,\overline{\rhd}\, f_1x_1+[f_1x_1,f_2x_2]_\lL\\
&=& f_1x_1(f_2)x_2 - f_2x_2(f_1)x_1 + f_1f_2[x_1,x_2]_{\rhd}.
\end{eqnarray*}
\end{prop}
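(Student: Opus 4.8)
The plan is to verify, in turn, the three defining conditions of a post-Lie-Rinehart algebra for the tuple $(\lR,\lL,[-,-]_\lL,\overline{\rho},\overline{\rhd})$, reserving the bulk of the work for the two post-Lie axioms. Throughout I would test every identity on elements of the form $f_1 a,\ f_2 b,\ f_3 c$ with $f_i\in\lR$ and $a,b,c\in\frakg$, since all the structure maps are $\bk$-bilinear and $\lR$ enters only through the coefficients.

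First I would dispose of the routine parts. The bracket $[-,-]_\lL$ from \eqref{eq:action-post-LR-1} is $\lR$-bilinear and simply transports the bracket of $\frakg$, so antisymmetry and the Jacobi identity descend from $\frakg$ and $\lL$ becomes a Lie $\lR$-algebra. The two compatibility conditions \eqref{eq:PLR-1} and \eqref{eq:PLR-2} follow at once from the definition \eqref{eq:action-post-LR-2} of $\overline{\rhd}$: scaling the left argument only multiplies the coefficient, while for \eqref{eq:PLR-2} the single Leibniz expansion $a(gf_2)=a(g)f_2+g\,a(f_2)$ produces exactly the extra anchor term $\overline{\rho}(f_1a)(g)f_2b$. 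That $\overline{\rho}$ is an $\lR$-module map is immediate from its definition, and $f\rho(x)\in\Der(\lR)$ because $\lR$ is commutative.

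The substance lies in the post-Lie axioms \eqref{Post-L-1} and \eqref{Post-L-2} for $\overline{\rhd}$. For \eqref{Post-L-1} I would expand both sides, using the Leibniz rule for $\rho(a)$ on products $a(f_2f_3)$ and the axiom \eqref{Post-L-1} for $\frakg$ to split $a\rhd[b,c]_\frakg$; the four resulting monomials (two carrying a single anchor factor, two carrying $[a\rhd b,c]_\frakg$ or $[b,a\rhd c]_\frakg$) match term by term. The hard part is \eqref{Post-L-2}. Here I would first rewrite the left inner factor $[x,y]_\lL+x\overline{\rhd}y-y\overline{\rhd}x$ as $f_1f_2[a,b]_\rhd+f_1a(f_2)b-f_2b(f_1)a$, recognizing the sub-adjacent bracket $[a,b]_\rhd$ of $\frakg$, and then apply $\overline{\rhd}\,f_3c$. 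Expanding the right-hand side $x\overline{\rhd}(y\overline{\rhd}z)-y\overline{\rhd}(x\overline{\rhd}z)$ by two nested Leibniz expansions produces six families of monomials, graded by whether they terminate in $c$, $a\rhd c$, $b\rhd c$, or $a\rhd(b\rhd c)$. The purely $\frakg$-valued top terms $a\rhd(b\rhd c)-b\rhd(a\rhd c)$ collapse to $[a,b]_\rhd\rhd c$ precisely by \eqref{Post-L-2} in $\frakg$.

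The single genuinely delicate point---and the place where the hypothesis is used in full strength---is the coefficient of $c$ carrying two nested anchors: on the left it appears as $f_1f_2\,\rho([a,b]_\rhd)(f_3)$, while on the right the nested Leibniz expansions deliver $f_1f_2\big(a(b(f_3))-b(a(f_3))\big)c=f_1f_2\,[\rho(a),\rho(b)](f_3)\,c$. These agree exactly because $\rho$ is assumed to be a Lie algebra homomorphism on the \emph{sub-adjacent} Lie algebra $\frakg_\rhd$, i.e. $\rho([a,b]_\rhd)=[\rho(a),\rho(b)]$; this is why the definition demands compatibility with $[-,-]_\rhd$ rather than with $[-,-]_\frakg$. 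The same identity drives the verification that $\overline{\rho}:\lL_{\overline{\rhd}}\to\Der(\lR)$ is a Lie homomorphism: expanding $[\overline{\rho}(f_1a),\overline{\rho}(f_2b)]$ via the bracket rule $[g_1D_1,g_2D_2]=g_1D_1(g_2)D_2-g_2D_2(g_1)D_1+g_1g_2[D_1,D_2]$ for scaled derivations and matching it against $\overline{\rho}$ applied to the stated formula for $[f_1a,f_2b]_{\overline{\rhd}}$ again reduces to $\rho([a,b]_\rhd)=[\rho(a),\rho(b)]$. Finally, the displayed formula for $\lL_{\overline{\rhd}}$ is obtained by substituting \eqref{eq:action-post-LR-1} and \eqref{eq:action-post-LR-2} into $[x,y]_{\overline{\rhd}}=x\overline{\rhd}y-y\overline{\rhd}x+[x,y]_\lL$ and collecting terms.
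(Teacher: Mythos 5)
Your proposal is correct. Note that the paper itself gives no proof of this proposition: it is stated bare, with the surrounding text attributing the construction to the geometric setting of action post-Lie algebroids in \cite{GLS}. Your direct verification therefore fills a gap the authors chose to leave, and it does so soundly: the routine parts (the $\lR$-bilinear bracket inherited from $\frakg$, the compatibility conditions \eqref{eq:PLR-1}--\eqref{eq:PLR-2} via one Leibniz expansion, and axiom \eqref{Post-L-1}) are dispatched correctly, and you put your finger on exactly the right pressure point in \eqref{Post-L-2}: after the nested Leibniz expansions, the double-anchor coefficient of $c$ is $f_1f_2\,[\rho(a),\rho(b)](f_3)$ on one side and $f_1f_2\,\rho([a,b]_\rhd)(f_3)$ on the other, so the identity holds precisely because $\rho$ is a Lie algebra homomorphism on the sub-adjacent algebra $\frakg_\rhd$ and not merely on $(\frakg,[-,-]_\frakg)$; the same identity is what makes $\overline{\rho}$ a Lie homomorphism out of $\lL_{\overline{\rhd}}$. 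This observation also explains why the hypothesis of the proposition is phrased the way it is, which is a point the paper's terse statement leaves implicit.
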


\begin{remark}
The special case of Proposition~\ref{thm:post-LR-derivation} for a Lie algebra $\frakg$ with zero post-Lie product generalizes the post-Lie algebras of derivations in {\cite[Theorem~3.1]{JZ}}.
\end{remark}

At the end of this subsection, we construct the free post-Lie-Rinehart algebra generated by a magma algebra with a fixed linear map to $\Der(\lR)$.

Let $(V,\curvearrowright)$ be a magma algebra. It is well-known that the free post-Lie algebra ${\rm PostLie}(V)$ generated by $(V,\curvearrowright)$ is the free Lie algebra $({\rm Lie}(V),[-,-]_{{\rm Lie}(V)})$ equipped with the extended post-Lie product $\rhd_V$ from $\curvearrowright$~\cite{Fo}. Namely, for any post-Lie algebra $(\frakg,[-,-]_\frakg,\rhd)$ with a magma algebra homomorphism $\phi:(V,\curvearrowright)\to(\frakg,\rhd)$, there exists a unique post-Lie algebra homomorphism $ \tilde{\phi}:({\rm Lie}(V),[-,-]_{{\rm Lie}(V)},\rhd_V)\to (\frakg,[-,-]_\frakg,\rhd)$ such that $\phi=\tilde\phi i_V$, where $i_V:V\to {\rm Lie}(V)$ is the natural inclusion.

In order to define a free post-Lie-Rinehart algebra generated by $(V,\curvearrowright)$, a linear map $f_V:V\to \Der(\lR)$
need to be given first. Now we extend it to be a Lie algebra homomorphism
$\rho_V:{\rm PostLie}(V)_{\rhd_V}\to \Der(\lR)$ by the following recursion,
\begin{eqnarray*}
\rho_V(x) &=& f_V(x),\\
\rho_V([x,y]_{{\rm Lie}(V)}) &=&[f_V(x),f_V(y)]-f_V(x\curvearrowright y)+f_V(y\curvearrowright x),\\
\rho_V([x,X]_{{\rm Lie}(V)}) &=& [f_V(x),\rho_V(X)]- \rho_V(x\rhd_V X) + f_V(X\rhd_V x),
\end{eqnarray*}
for any $x,y\in V$ and $X\in {\rm Lie}(V)$.

By Proposition~\ref{thm:post-LR-derivation}, we have the action post-Lie-Rinehart algebra
$${\rm PostLR}(V)\coloneqq (\lR,\lR\otimes {\rm Lie}(V),[-,-]_{\lR\otimes {\rm Lie}(V)},\overline{\rho}_V,\overline{\rhd}_V).$$
\begin{theorem}\label{thm:free-post-LR}
The action post-Lie-Rinehart algebra
${\rm PostLR}(V)$ has the following universal property:

For any post-Lie-Rinehart algebra $\mathscr{\lR}=(\lR,\lL,[-,-]_\lL,\rho,\rhd)$, and a magma algebra homomorphism $\varphi:(V,\curvearrowright)\to (\lL,\rhd)$ such that $\rho\varphi=f_V$,
there exists a unique post-Lie-Rinehart algebra homomorphism $\tilde\varphi:{\rm PostLR}(V)\to L$ such that $\varphi=\tilde\varphi (1_\lR\otimes i_V)$, where $i_V:V
\to {\rm Lie}(V)$ is the natural embedding. That is, we have the following commutative diagram:
$$\xymatrix@=2em{\Der(\lR) & V\ar@{->}[d]_-{\varphi}\ar@{->}[r]^-{1_\lR\otimes i_V}\ar@{->}[l]_-{f_V}&{\lR\otimes {\rm Lie}(V)}\ar@{.>}[dl]^-{\exists!\, \tilde\varphi}\\
	& \lL \ar@{->}[ul]^-{\rho}&}$$
Thus, the post-Lie-Rinehart algebra ${\rm PostLR}(V)$ is the {\bf free post-Lie-Rinehart algebra} generated by $(V,\curvearrowright,f_V)$.
\end{theorem}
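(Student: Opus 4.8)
The plan is to build $\tilde\varphi$ in two stages: first a $\bk$-linear map out of ${\rm Lie}(V)$ produced by the freeness of ${\rm PostLie}(V)$, then its $\lR$-linear extension to $\lR\otimes{\rm Lie}(V)$; the only delicate point is compatibility with anchors, which I reduce to a single identity of Lie algebra homomorphisms and settle by induction. Concretely, since $(\lL,[-,-]_\lL,\rhd)$ is a post-Lie algebra and $\varphi\colon(V,\curvearrowright)\to(\lL,\rhd)$ is a magma algebra homomorphism, the universal property of the free post-Lie algebra ${\rm PostLie}(V)=({\rm Lie}(V),[-,-]_{{\rm Lie}(V)},\rhd_V)$ furnishes a unique post-Lie algebra homomorphism $\widehat\varphi\colon{\rm Lie}(V)\to\lL$ with $\widehat\varphi\, i_V=\varphi$. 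As $\lR\otimes{\rm Lie}(V)$ is the free $\lR$-module on ${\rm Lie}(V)$, I would set
\[
\tilde\varphi\colon\lR\otimes{\rm Lie}(V)\to\lL,\qquad \tilde\varphi(f\otimes X)=f\,\widehat\varphi(X),
\]
which is $\lR$-linear by construction and satisfies $\varphi=\tilde\varphi(1_\lR\otimes i_V)$.

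Next I would check that $\tilde\varphi$ respects the bracket and the post-Lie product. For the bracket, \eqref{eq:action-post-LR-1} gives $[f_1\otimes X_1,f_2\otimes X_2]_{\lR\otimes{\rm Lie}(V)}=f_1f_2\otimes[X_1,X_2]_{{\rm Lie}(V)}$, and applying $\tilde\varphi$, using that $\widehat\varphi$ is a Lie homomorphism and that $[-,-]_\lL$ is $\lR$-linear, matches $[\tilde\varphi(f_1\otimes X_1),\tilde\varphi(f_2\otimes X_2)]_\lL$. For the product, I expand $f_1\otimes X_1\,\overline{\rhd}_V\,f_2\otimes X_2$ by \eqref{eq:action-post-LR-2}, apply $\tilde\varphi$, and invoke $\widehat\varphi(X_1\rhd_V X_2)=\widehat\varphi(X_1)\rhd\widehat\varphi(X_2)$ together with the anchor identity $\rho\widehat\varphi=\rho_V$ established below; rewriting the right-hand side $f_1\widehat\varphi(X_1)\rhd f_2\widehat\varphi(X_2)$ via \eqref{eq:PLR-1} and \eqref{eq:PLR-2} in $\lL$ then produces the same expression.

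The crux is the anchor compatibility. Since $\rho$ and $\overline{\rho}_V$ are $\lR$-linear, $\rho\tilde\varphi=\overline{\rho}_V$ reduces to $\rho\widehat\varphi=\rho_V$ on ${\rm Lie}(V)$. Writing $\psi=\rho\circ\widehat\varphi$, I note $\psi(v)=\rho\varphi(v)=f_V(v)=\rho_V(v)$ for $v\in V$, and then verify that $\psi$ satisfies the recursion defining $\rho_V$. Using $[a,b]_\lL=[a,b]_\rhd-a\rhd b+b\rhd a$ in $\lL$, the fact that $\widehat\varphi$ intertwines $\rhd_V,\rhd$ and the two brackets, and that $\rho([-,-]_\rhd)=[\rho(-),\rho(-)]$ since $\rho\colon\lL_\rhd\to\Der(\lR)$ is a Lie homomorphism, I obtain for $x\in V$ and $X\in{\rm Lie}(V)$
\[
\psi([x,X]_{{\rm Lie}(V)})=[f_V(x),\psi(X)]-\psi(x\rhd_V X)+\psi(X\rhd_V x),
\]
which is exactly the recursion for $\rho_V([x,X]_{{\rm Lie}(V)})$ after matching terms. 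The induction runs on the bracket length of ${\rm Lie}(V)$: here $\psi(X)$ and $\psi(x\rhd_V X)$ have strictly smaller length (the operator $x\rhd_V(-)$ is a derivation by \eqref{Post-L-1}, hence length-preserving), so they equal $\rho_V(X)$ and $\rho_V(x\rhd_V X)$ by induction. I expect the genuine obstacle to be the remaining term: justifying the lemma that \textbf{$X\rhd_V x\in V$ for all $X\in{\rm Lie}(V)$, $x\in V$}, so that $\psi(X\rhd_V x)=f_V(X\rhd_V x)=\rho_V(X\rhd_V x)$ falls into the base case and closes the recursion. I would prove this lemma by induction on the length of $X$, writing $X=[a,Y]_{{\rm Lie}(V)}$ with $a\in V$, computing $[a,Y]_\rhd\rhd_V x=a\rhd_V(Y\rhd_V x)-Y\rhd_V(a\rhd_V x)$ from \eqref{Post-L-2}, converting $[-,-]_\rhd$ back to $[-,-]_{{\rm Lie}(V)}$, and using $V\curvearrowright V\subseteq V$ throughout.

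Finally, for uniqueness, any post-Lie-Rinehart homomorphism $\chi\colon{\rm PostLR}(V)\to\lL$ with $\chi(1_\lR\otimes i_V)=\varphi$ is $\lR$-linear, hence determined by its restriction to $1_\lR\otimes{\rm Lie}(V)$. On this subspace the bracket is $[1_\lR\otimes X,1_\lR\otimes Y]_{\lR\otimes{\rm Lie}(V)}=1_\lR\otimes[X,Y]_{{\rm Lie}(V)}$ by \eqref{eq:action-post-LR-1}, so $1_\lR\otimes{\rm Lie}(V)$ is generated as a Lie algebra by $1_\lR\otimes V=(1_\lR\otimes i_V)(V)$; since $\chi$ is a Lie homomorphism agreeing with $\varphi$ on these generators, it is forced on $1_\lR\otimes{\rm Lie}(V)$ and then, by $\lR$-linearity, on all of $\lR\otimes{\rm Lie}(V)$, giving $\chi=\tilde\varphi$.
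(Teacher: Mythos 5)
Your proposal is correct and follows essentially the same route as the paper's proof: lift $\varphi$ to a post-Lie algebra homomorphism $\widehat\varphi$ on ${\rm Lie}(V)$ via freeness of ${\rm PostLie}(V)$, extend $\lR$-linearly by $\tilde\varphi(f\otimes X)=f\widehat\varphi(X)$, verify bracket, product and anchor compatibility, and deduce uniqueness from the generation of ${\rm Lie}(V)$ by $V$. The only difference is one of detail: where the paper settles the key identity $\rho\widehat\varphi=\rho_V$ in one line (``since ${\rm Lie}(V)$ is generated by $V$''), you spell out the induction on bracket length and isolate the auxiliary fact ${\rm Lie}(V)\rhd_V V\subseteq V$ (which the paper's recursive definition of $\rho_V$ already presupposes), and both your induction and your sketch of that lemma are sound.
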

\begin{proof}
First by the universal property of ${\rm PostLie}(V)$, there exists a unique post-Lie algebra homomorphism $\psi:{\rm PostLie}(V)\to (\lL,[-,-]_\lL,\rhd)$ such that $\varphi=\psi i_V$. It induces a Lie algebra homomorphism $\rho\psi:{\rm PostLie}(V)_{\rhd_V}\to\Der(\lR)$ satisfying
$\rho\psi i_V=\rho\varphi=f_V,$
so we have $\rho\psi=\rho_V$, since ${\rm Lie}(V)$ is generated by $V$.

Now define a linear map $\tilde\varphi:\lR\otimes {\rm Lie}(V)\to \lL$ by
$$\tilde\varphi(fX)=f\psi(X),\quad\forall f\in\lR,X\in {\rm Lie}(V).$$
It is clear that $\tilde\varphi$ is $\lR$-linear and $\tilde\varphi (1_\lR\otimes i_V)=\psi i_V=\varphi$. Also, we have
\begin{eqnarray*}
\tilde\varphi([fX,gY]_{\lR\otimes {\rm Lie}(V)}) &=& \tilde\varphi(fg[X,Y]_{{\rm Lie}(V)})\ =\ fg\psi([X,Y]_{{\rm Lie}(V)})\\
&=& [f\psi(X),g\psi(Y)]_\lL\ =\
[\tilde\varphi(fX),\tilde\varphi(gY)]_\lL,\\[.5em]
\tilde\varphi(fX\,\overline{\rhd}_V gY) &=& \tilde\varphi(f\rho_V(X)(g)Y+fg(X\rhd_V Y))\\
&=& f\rho_V(X)(g)\psi(Y)+ fg\psi(X\rhd_V Y)\\
&=& f\rho(\psi(X))(g)\psi(Y)+ fg(\psi(X)\rhd \psi(Y))\\
&=& f\psi(X) \rhd g\psi(Y)=\ \tilde\varphi(fX)\rhd\tilde\varphi(gY),
\end{eqnarray*}
which implies that $\tilde\varphi$ is a post-Lie algebra homomorphism. Moreover, we have
$$
\rho(\tilde\varphi(fX)) = \rho(f\psi(X))\ =\ f\rho(\psi(X))= f\rho_V(X) = \overline{\rho}_V(fX),
$$
which implies that $\tilde\varphi:{\rm PostLR}(V)\to L$ is a post-Lie-Rinehart algebra homomorphism. Meanwhile, a post-Lie-Rinehart algebra homomorphism from ${\rm PostLR}(V)$ to $L$ is determined by its restriction on $V$, so the desired homomorphism $\tilde\varphi$ is unique.
\end{proof}

\begin{exam}\label{ex:free-post-Lie}
The free post-Lie algebra on one generator is realized as ${\rm PostLie}(\bk\huaO)$, where $\bk\huaO$ is the magma algebra of planar rooted trees with the left grafting operation $\curvearrowright$~\cite{ML}.

According to Theorem~\ref{thm:free-post-LR}, given any linear map $f_{\bk\huaO}:\bk\huaO\to \Der(\lR)$, the action post-Lie-Rinehart algebra
${\rm PostLR}(\bk\huaO)=(\lR,\lR\otimes {\rm Lie}(\bk\huaO),[-,-]_{\lR\otimes {\rm Lie}(\bk\huaO)},\overline{\rho}_{\bk\huaO},\overline{\rhd}_{\bk\huaO})$
is the free post-Lie-Rinehart algebra generated by $(\bk\huaO,\curvearrowright,f_{\bk\huaO})$.
\end{exam}

We mention that the free object in the category of pre-Lie-Rinehart algebras with trace has been constructed via aromatic non-planar trees in \cite{FMM}, and such algebraic structures can also be characterised by universal geometric properties \cite{MKV,LMKequiv}.

\subsection{The universal enveloping algebra of a post-Lie-Rinehart algebra}

Given a post-Lie-Rinehart algebra $\mathscr{\lR}=(\lR,\lL,[-,-]_\lL,\rho,\rhd)$, since $\lL$ is a Lie $\lR$-algebra,
we have a Hopf $\lR$-algebra $(\mathscr U_\lR(\lL),\Delta,\varepsilon,S,\iota_\lR)$ by Proposition~\ref{prop:uea-LRA}. Now we provide our main theorem in this section, which generalizes \cite[Theorem 2.8]{LST}.

\begin{theorem}\label{thm:uea-post-LR}
For a post-Lie-Rinehart algebra $\mathscr{\lR}=(\lR,\lL,[-,-]_\lL,\rho,\rhd)$,
the Hopf $\lR$-algebra $(\mathscr U_\lR(\lL),\Delta,\varepsilon,S,\iota_\lR)$
has a $\bk$-linear product $\rhd$ recursively defined by
\begin{eqnarray}
\label{eq:exten-1}
f\rhd X &=& fX,\\
\label{eq:exten-2}
x\rhd f &=& x(f),\\
\label{eq:exten-3}
x\rhd yX &=& (x\rhd y)X + y(x\rhd X),\\
\label{eq:exten-4}
xX \rhd Y &=& x\rhd (X\rhd Y) -(x\rhd X)\rhd Y,
\end{eqnarray}
for any $f,g\in \lR$, $x,y\in \lL$ and $X,Y\in \mathscr U_\lR(\lL)$.
Then the tuple $(\mathscr U_\lR(\lL),\Delta,\varepsilon,S,\iota_\lR,\rhd)$ is a weak post-Hopf algebroid over $\lR$,
which is called the {\bf universal enveloping algebra} of $\mathscr{\lR}$.

\end{theorem}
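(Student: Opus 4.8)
The plan is to construct the product $\rhd$ in two stages---first when the left argument is a single generator, then on all of $\mathscr U_\lR(\lL)$ via \eqref{eq:exten-4}---and then to deduce the seven defining identities \eqref{eq:post-HAD-1}--\eqref{eq:post-HAD-4} by induction on the PBW filtration degree of the left argument, reducing each to the generator case. For the first stage, fix $x\in\lL$ and realize $x\rhd-$ as a $\bk$-derivation $D_x$ of the associative algebra $\mathscr U_\lR(\lL)$. Well-definedness I would get from the universal property of $\mathscr U_\lR(\lL)$ (recall that $\lL$ is a Lie $\lR$-algebra, so $\iota_\lR(\lR)$ is central): map into the square-zero extension $A=\mathscr U_\lR(\lL)[\epsilon]$, $\epsilon^2=0$, by $\kappa_\lR(f)=f+\rho(x)(f)\epsilon$ and $\kappa_\lL(y)=y+(x\rhd y)\epsilon$. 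Here $\kappa_\lR$ is an algebra map since $\rho(x)\in\Der(\lR)$; $\kappa_\lL$ is a Lie map by the post-Lie identity \eqref{Post-L-1} (using $x\rhd y\in\lL$); and the relations $\iota_\lR(f)\iota_\lL(y)=\iota_\lL(fy)$ and the centrality of $\iota_\lR(\lR)$ survive because of \eqref{eq:PLR-2} and the centrality of $\lR$. The universal property then produces an algebra map $a\mapsto a+D_x(a)\epsilon$, whose $\epsilon$-part $D_x$ is a derivation with $D_x(f)=\rho(x)(f)$ and $D_x(y)=x\rhd y$; equations \eqref{eq:exten-2}--\eqref{eq:exten-3} are exactly its values and Leibniz rule, while \eqref{eq:PLR-1} and $\lR$-linearity of $\rho$ (plus centrality) give $D_{fx}=fD_x$.

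The second stage is the crux. I would show that $x\mapsto D_x$, together with $f\mapsto(\text{multiplication by }f)$, is a representation of the sub-adjacent Lie-Rinehart algebra $\mathscr{\lR}_\rhd=(\lR,\lL,[-,-]_\rhd,\rho)$ on the $\lR$-module $\mathscr U_\lR(\lL)$. The connection axiom $D_x(fm)=fD_x(m)+\rho(x)(f)m$ is the Leibniz rule; $D_{fx}=fD_x$ is the first stage; and flatness $[D_x,D_y]=D_{[x,y]_\rhd}$ holds because both sides are derivations agreeing on the generators $\lR$ and $\lL$---on $\lR$ because $\rho$ is a Lie map on $\lL_\rhd$, and on $\lL$ by the second post-Lie identity \eqref{Post-L-2}, noting $[x,y]_\rhd\in\lL$. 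By the universal property of $\mathscr U_\lR(\lL_\rhd)$ this representation extends to an algebra homomorphism $\mathscr U_\lR(\lL_\rhd)\to\End_\bk(\mathscr U_\lR(\lL))$. Transporting it along the canonical $\lR$-coalgebra identification $\mathscr U_\lR(\lL)\cong\Sym_\lR(\lL)\cong\mathscr U_\lR(\lL_\rhd)$ (symmetrization in characteristic $0$, using $\lR$-projectivity of $\lL$ for PBW) yields a bilinear $\rhd$ in the left slot. Concretely, descent of the recursion \eqref{eq:exten-4} to $\mathscr U_\lR(\lL)$ reduces to checking $(xy-yx-[x,y]_\lL)\rhd Z=0$, and unwinding \eqref{eq:exten-4} turns this into precisely the operator identity $[D_x,D_y]=D_{[x,y]_\rhd}$ just established.

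With $\rhd$ in hand, \eqref{eq:post-HAD-1''} is immediate from \eqref{eq:exten-1}, and \eqref{eq:post-HAD-2} follows by induction from $D_{fx}=fD_x$ and \eqref{eq:exten-4}. The remaining identities I would prove by induction on the filtration degree of the left argument, using $\Delta(xX)=(x\otimes_\lR 1+1\otimes_\lR x)\Delta(X)$ together with the recursion to reduce to $x\in\lL$. The base cases are exactly the derivation/coderivation properties of $D_x$: \eqref{eq:post-HAD-3} is the Leibniz rule paired with primitivity of $x$; \eqref{eq:post-HAD-1} follows once $D_x$ is shown to be a coderivation, which holds on generators because $x\rhd y\in\lL$ is primitive and $D_x(1)=\rho(x)(1_\lR)=0$; \eqref{eq:post-HAD-1'} and \eqref{eq:post-HAD-2'} come from $\varepsilon$ and $D_x(1)=0$; and \eqref{eq:post-HAD-4} for $x\in\lL$ is the rearrangement of \eqref{eq:exten-4}, since $x*_\rhd X=xX+x\rhd X$ for primitive $x$.

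The main obstacle is the second stage: passing from the generatorwise derivations $D_x$ to a genuinely well-defined product in the left variable, i.e. proving that the recursion \eqref{eq:exten-4} respects the defining ideal of $\mathscr U_\lR(\lL)$. Everything there rests on upgrading the pointwise identity \eqref{Post-L-2} to the operator identity $[D_x,D_y]=D_{[x,y]_\rhd}$ on all of $\mathscr U_\lR(\lL)$, and on the PBW-dependent coalgebra identification with the sub-adjacent enveloping algebra. Once this is secured, the inductive verifications in the third stage, though lengthy, are routine bookkeeping with $\Delta$, $\varepsilon$, and the centrality of $\iota_\lR(\lR)$.
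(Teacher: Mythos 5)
Your first stage and your flatness lemma are correct, and they are genuinely cleaner than the corresponding steps in the paper: realizing $x\rhd -$ as a derivation $D_x$ of $\mathscr U_\lR(\lL)$ via the universal property applied to the square-zero extension $\mathscr U_\lR(\lL)[\epsilon]$ disposes of all right-slot well-definedness at once, and the identity $[D_x,D_y]=D_{[x,y]_\rhd}$ (two derivations agreeing on the generators $\lR\cup\lL$) neatly packages \eqref{Post-L-2} together with the hypothesis that $\rho:\lL_\rhd\to\Der(\lR)$ is a Lie map. The paper instead works with the explicit presentation $\mathscr U_\lR(\lL)=\lR\oplus\bar{\mathscr U}(\lL)/\mathscr J$, where $\mathscr J=\big((fx)y-x(fy)\big)$, and verifies these facts by direct computation.

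The gap is in your second stage, which you yourself identify as the crux. Your primary mechanism --- transporting the $\mathscr U_\lR(\lL_\rhd)$-module structure along $\mathscr U_\lR(\lL)\cong\Sym_\lR(\lL)\cong\mathscr U_\lR(\lL_\rhd)$ --- fails on two counts. First, PBW for Lie--Rinehart algebras (Rinehart's theorem, as the paper notes) requires $\lL$ to be $\lR$-projective, which the theorem does not assume; the paper's proof deliberately avoids PBW. Second, and more fundamentally, even granting PBW, a coalgebra isomorphism restricting to the identity on $\lR\oplus\lL$ is far from unique: one can perturb the image of a product $xy$ by any primitive element and still have a coalgebra map. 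So the transported operation is \emph{some} left action, but nothing forces it to satisfy the defining recursion \eqref{eq:exten-4}; proving that it does is exactly the descent problem you set out to bypass. (The identification of $\mathscr U_\lR(\lL_\rhd)$ with the Grossman--Larson algebra is a \emph{consequence} of the theorem, \`a la Oudom--Guin/Gavrilov, not an input available beforehand.) Your fallback claim that descent ``reduces to checking $(xy-yx-[x,y]_\lL)\rhd Z=0$'' is also incomplete: the defining ideal contains Lie relators sitting in \emph{arbitrary} positions inside words, which forces a simultaneous induction on word length (prepend a letter $w$, expand $w\rhd(\cdots)$ by Leibniz and \eqref{Post-L-1}), and it contains the $\lR$-mixing relators $(fx)y-x(fy)$, whose compatibility with the recursion in the left slot uses \eqref{eq:PLR-1}, \eqref{eq:PLR-2} and $D_{fx}=fD_x$, not the flatness identity. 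This is precisely where the paper's proof spends most of its effort (its Eq. \eqref{eq:uea_rhd_lR} and the three inclusions $\bar{\mathscr U}(\lL)\rhd\mathscr J\subseteq\mathscr J$, $\mathscr J\rhd\lR=\{0\}$, $\mathscr J\rhd\bar{\mathscr U}(\lL)\subseteq\mathscr J$). These checks do go through inside your framework --- for instance $\big((fx)y-x(fy)\big)\rhd Z=0$ follows from the Leibniz rule for $D_x$ and \eqref{eq:PLR-2} --- but your proposal neither performs them nor correctly reduces them to the flatness lemma, so as written the construction of $\rhd$ in the left variable is not established.
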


\begin{proof}

According to the Oudom-Guin construction in \cite{ELM}, the post-Lie product $\rhd$ can be extended to the universal enveloping algebra $\mathscr U(\lL)$ of the Lie algebra $\lL$ via Eqs.~\eqref{eq:exten-1}--\eqref{eq:exten-2} with $f=1_\lR$, and also \eqref{eq:exten-3} and \eqref{eq:exten-4} with $X,Y\in \mathscr U(\lL)$. As the first part,
we show that such a construction can be generalized for the post-Lie-Rinehart algebra $\mathscr{\lR}$, namely the post-Lie product $\rhd$ of $\lL$ can be extended further to the $\lR$-algebra $\mathscr U_\lR(\lL)$ via formulas ~\eqref{eq:exten-1}--\eqref{eq:exten-4}.

Let $\mathscr U(\lL)=T(\lL)/(xy-xy-[x,y]_\lL\,|\,x,y\in\lL)=\bk\oplus \bar{\mathscr U}(\lL)$, where $\bar{\mathscr U}(\lL)=\lL\mathscr U(\lL)$. Correspondingly, we have a direct sum of $\bk$-linear subspaces
$\mathscr U_\lR(\lL)=\lR \oplus \bar{\mathscr U}(\lL)/\mathscr J$, where $\mathscr J$ is the ideal of $\mathscr U(\lL)$ given by
$$\mathscr J=((fx)y-x(fy)\in \bar{\mathscr U}(\lL)\,|\,f\in\lR,\,x,y\in\lL).$$

First note that the action $\rhd$ of $\lR$ on $\mathscr U_\lR(\lL)$ via Eq.~\eqref{eq:exten-1} is well-defined, since $\mathscr U_\lR(\lL)$ is an $R$-module by the left multiplication of $\lR$. So we still need to verify that $\mathscr U_\lR(\lL)$ acts on $\lR$ properly by Eqs.~\eqref{eq:exten-2} and \eqref{eq:exten-4}, and then the linear map
$$\rhd:\bar{\mathscr U}(\lL)/\mathscr J\otimes \bar{\mathscr U}(\lL)/\mathscr J\to \bar{\mathscr U}(\lL)/\mathscr J$$
is also well-defined by Eqs.~\eqref{eq:exten-3} and \eqref{eq:exten-4}. In order to do so,
we just check that
\begin{equation}\label{eq:uea_rhd_lR}
X(xy-yx-[x,y]_\lL)Y\rhd f=0,\quad \forall f\in \lR,\ x,y\in\lL,\ X,Y\in T(\lL),
\end{equation}
and then
$\bar{\mathscr U}(\lL) \rhd\mathscr J\subseteq \mathscr J$, $\mathscr J\rhd \lR = \{0\}$ and $\mathscr J \rhd \bar{\mathscr U}(\lL)\subseteq \mathscr J$ successively.

For the proof of Eq.~\eqref{eq:uea_rhd_lR}, we apply Eqs.~\eqref{Post-L-1}, \eqref{eq:exten-3} and \eqref{eq:exten-4} to see that
\begin{eqnarray*}
&&wX(xy-yx-[x,y]_\lL)Y\rhd f\\
&=&  w\rhd(X(xy-yx-[x,y]_\lL)Y\rhd f)
- (w\rhd X)(xy-yx-[x,y]_\lL)Y\rhd f\\
&&- X((w\rhd x)y-y(w\rhd x)-[w\rhd x,y]_\lL)Y\rhd f
 - X(x(w\rhd y)-(w\rhd y)x-[x,w\rhd y]_\lL)Y\rhd f\\
&& - X(xy-yx-[x,y]_\lL)(w\rhd Y)\rhd f
\end{eqnarray*}
for any $f\in \lR$, $w,x,y\in\lL$ and $X,Y\in T(\lL)$.
So by induction on the degree of polynomials it reduces to check that
$$(xy-yx-[x,y]_\lL)X\rhd f=0,\quad \forall f\in \lR,\ x,y\in\lL,\ X\in T(\lL).$$
By Eqs.~\eqref{eq:exten-2}--\eqref{eq:exten-4}, we find that
\begin{eqnarray*}
(xy-yx-[x,y]_\lL)X\rhd f
&=&x(y(X\rhd f)) - y(x(X\rhd f))- [x,y]_\rhd(X\rhd f)\\
&&\quad -\big(x\rhd(y\rhd X)-y\rhd(x\rhd X)-[x,y]_\rhd\rhd X\big)\rhd f,
\end{eqnarray*}
which vanishes by Eqs.~\eqref{Post-L-2}, \eqref{eq:exten-3} and the anchor $\rho:\lL_\rhd\to\Der(\lR)$ as a Lie homomorphism. Hence, Eq.~\eqref{eq:uea_rhd_lR} holds.

Now it remains to check that $\bar{\mathscr U}(\lL) \rhd\mathscr J\subseteq \mathscr J$, and then $\mathscr J\rhd \lR = \{0\}$ and $\mathscr J \rhd \bar{\mathscr U}(\lL)\subseteq \mathscr J$.
First one can use Eqs.~\eqref{eq:exten-3} and \eqref{eq:exten-4} repeatedly to see that $\bar{\mathscr U}(\lL)\rhd\mathscr J\subseteq \mathscr J$, since the following initial step holds:
\begin{eqnarray*}
&& w\rhd((fx)y-x(fy))\\
&\stackrel{\eqref{eq:exten-3}}{=}& (w\rhd fx)y +(fx)(w\rhd y)- (w\rhd x)(fy)-x(w\rhd fy)\\
&\stackrel{\eqref{eq:PLR-2}}{=}& (w(f)x+f(w\rhd x))y +(fx)(w\rhd y)- (w\rhd x)(fy) -x(w(f)y+ f(w\rhd y))\\
&=& (w(f)x)y-x(w(f)y) + (f(w\rhd x))y- (w\rhd x)(fy)+ (fx)(w\rhd y) -x(f(w\rhd y))\ \in \ \mathscr J
\end{eqnarray*}
for any $f\in R$ and $w,x,y\in L$.

Next we check that  $\mathscr J\rhd \lR = \{0\}$ and $\mathscr J \rhd \bar{\mathscr U}(\lL)\subseteq \mathscr J$ simultaneously by induction on the degree of polynomials in $\mathscr J$.
First fix $J\in \mathscr J$ and assume that $J'\rhd \lR=\{0\}$ (resp.  $J'\rhd \bar{\mathscr U}(\lL)\subseteq \mathscr J$) for any polynomial $J'\in \mathscr J$ of degree not greater than $J$. Then for any $x\in L$ and $X\in \lR$ (resp. $X\in \bar{\mathscr U}(\lL)$),
\begin{eqnarray*}
xJ\rhd X &\stackrel{\eqref{eq:exten-4}}{=}&
x\rhd (J\rhd X) -(x\rhd J)\rhd X.
\end{eqnarray*}
As  shown previously $\bar{\mathscr U}(\lL)\rhd\mathscr J\subseteq \mathscr J$, we have $x\rhd (J\rhd X)$, $(x\rhd J)\rhd X=0$
(resp. $x\rhd (J\rhd X), (x\rhd J)\rhd X\in \mathscr J$) by the assumption, so $xJ\rhd X=0$ (resp. $xJ\rhd X\in \mathscr J$).
As a result, it is enough to show that $((fx)y-x(fy))\mathscr U(\lL)\rhd \lR =\{0\}$ and $((fx)y-x(fy))\mathscr U(\lL)\rhd \bar{\mathscr U}(\lL)\subseteq \mathscr J$.

For any $f,g\in \lR$, $x,y\in L$, we have
\begin{eqnarray*}
&&((fx)y-x(fy))\rhd g\\
&\stackrel{\eqref{eq:exten-4}}{=}& fx \rhd(y\rhd g) - (fx\rhd y)\rhd g -x\rhd(fy\rhd g) + (x\rhd fy)\rhd g\\
&\stackrel{\eqref{eq:PLR-1},\eqref{eq:PLR-2}}{=}& fx(y(g)) - f(x\rhd y)(g) - x(fy(g)) + x(f)y(g) + f(x\rhd y)(g)\ \\
&=&\ 0,
\end{eqnarray*}
so $((fx)y-x(fy))\rhd \lR=\{0\}$. On the other hand, if $w\in L$, then
\begin{eqnarray*}
&&((fx)y-x(fy))\rhd w\\
&\stackrel{\eqref{eq:exten-4}}{=}& fx\rhd(y\rhd w) - (fx\rhd y)\rhd w -x\rhd(fy\rhd w) + (x\rhd fy)\rhd w\\
&\stackrel{\eqref{eq:PLR-1},\eqref{eq:PLR-2}}{=}& fx\rhd(y\rhd w) - f(x\rhd y)\rhd w -x\rhd (fy\rhd w)  + (x(f)y+f(x\rhd y))\rhd w\\
&\stackrel{\eqref{eq:PLR-1}}{=}& f(x\rhd(y\rhd w)) - x\rhd f(y\rhd w) + x(f)(y\rhd w)\ \\
&\stackrel{\eqref{eq:PLR-2}}{=}&\ 0.
\end{eqnarray*}
Then assuming that $((fx)y-x(fy))\rhd X\in \mathscr J$ for $X\in \bar{\mathscr U}(\lL)$,
we see that
\begin{eqnarray*}
((fx)y-x(fy))\rhd wX
&\stackrel{\eqref{eq:exten-3},\eqref{eq:exten-4}}{=}&
(((fx)y-x(fy))\rhd w)X + (fx\rhd w)(y\rhd X) \\
 && \quad + (y\rhd w)(fx\rhd X) - (x\rhd w)(fy\rhd X)\\
 && \quad - (fy\rhd w)(x\rhd X) + w(((fx)y-x(fy))\rhd X)\\
&\stackrel{\eqref{eq:PLR-1}}{=}&  (f(x\rhd w))(y\rhd X)  - (x\rhd w)(fy\rhd X)\\
 && \quad + (y\rhd w)(fx\rhd X) - (f(y\rhd w))(x\rhd X)\\
&& \qquad + w(((fx)y-x(fy))\rhd X) \in \mathscr J,
\end{eqnarray*}
as $(fx)Xy-xX(fy)\in \mathscr J$ for any $x,y\in L$ and $X\in \bar{\mathscr U}(\lL)$ by the definition of $\mathscr J$. So we also have $((fx)y-x(fy))\rhd \bar{\mathscr U}(\lL)\subseteq \mathscr J$.

Then for any $Y\in \lR$ (resp. $Y\in\bar{\mathscr U}(\lL)$), we obtain that
\begin{eqnarray*}
((fx)y-x(fy))X\rhd Y &\stackrel{\eqref{eq:exten-4}}{=}&
(\underline{(fx)y-x(fy)})\rhd (X\rhd Y) -  (\underline{((fx)y-x(fy))\rhd X})\rhd Y\\
&& \quad + (\underline{x(fy\rhd X)- (fx)(y\rhd X)})\rhd Y \\
&& \quad  + (\underline{(fy)(x\rhd X) - y(fx\rhd X)})\rhd Y = 0\ (\mbox{resp. }\in \mathscr J)
\end{eqnarray*}
by induction on the degree of polynomials in $\mathscr J$, since the above elements underlined all belong to $\mathscr J$.
Hence, the proof that $\mathscr J\rhd \lR = \{0\}$ and $\mathscr J \rhd \bar{\mathscr U}(\lL)\subseteq \mathscr J$ has completed.

\medskip
For the second part, we show that the Hopf $\lR$-algebra $(\mathscr U_\lR(\lL),\Delta,\varepsilon,S,\iota_\lR)$ with the $\bk$-linear product $\rhd$  satisfies
conditions~\eqref{eq:post-HAD-1}--\eqref{eq:post-HAD-4},
so $(\mathscr U_\lR(\lL),\iota_\lR,\rhd)$ is a weak post-Hopf algebroid.

Recall that
$\mathscr U_\lR(\lL)=\lR \oplus \bar{\mathscr U}(\lL)/\mathscr J$,
where $\mathscr U(\lL)=\bk\oplus\bar{\mathscr U}(\lL)$ is the universal enveloping algebra of the post-Lie algebra $(\lL,\rhd)$.
First we see that conditions~\eqref{eq:post-HAD-1'}--\eqref{eq:post-HAD-2'} clearly hold for $\mathscr U_\lR(\lL)$ by definitions of the product $\rhd$ and the counit $\varepsilon$ of $\mathscr U_\lR(\lL)$. For instance, Eqs.~\eqref{eq:exten-1}, \eqref{eq:exten-2} and \eqref{eq:exten-4} tell us that
\begin{eqnarray*}
X\rhd 1_\lR &=& \begin{cases}
 0, & X\in \bar{\mathscr U}(\lL)/\mathscr J,\\
 X, & X\in \lR,
\end{cases}
\end{eqnarray*}
so Eq.~\eqref{eq:post-HAD-1'} holds.
By the recursion formulas \eqref{eq:exten-3} and \eqref{eq:exten-4}, the compatibility condition~\eqref{eq:PLR-1} for the post-Lie-Rinehart algebra $\mathscr{\lR}$ is extended to condition~\eqref{eq:post-HAD-2} for $\mathscr U_\lR(\lL)$.

In order to verify conditions~\eqref{eq:post-HAD-1}, \eqref{eq:post-HAD-3} and \eqref{eq:post-HAD-4} for $\mathscr U_\lR(\lL)$, we only need to check the situations involving elements in the base algebra $\lR$, since they already hold for the original post-Hopf algebra $\mathscr U(\lL)$. For any $f,g\in \lR$, $X,Y\in \mathscr U_\lR(\lL)$,
\begin{eqnarray*}
\Delta(f\rhd X) &=& \Delta(fX)\ =\ fX_1\otimes_\lR X_2=(f\rhd X_1)\otimes_\lR (1_\lR\rhd X_2),\\
\Delta(X\rhd f) &=& (X\rhd f)\otimes_\lR 1_\lR\ =\ (\varepsilon(X_1)X_2\rhd f)\otimes_\lR 1_\lR \ \stackrel{\eqref{eq:post-HAD-1'},\,\eqref{eq:post-HAD-2}}{=} \ (X_1\rhd f)\otimes_\lR (X_2\rhd 1_\lR),
\end{eqnarray*}
so Eq.~\eqref{eq:post-HAD-1} holds. By induction on the degree of polynomials in $\mathscr U_\lR(\lL)$, we check that
\begin{eqnarray*}
f\rhd XY &\stackrel{\eqref{eq:exten-1}}{=}& fXY \ \stackrel{\eqref{eq:exten-1}}{=}\ (f\rhd X)(1_\lR\rhd Y),\\
x\rhd fY &\stackrel{\eqref{eq:exten-3},\,\eqref{eq:PLR-2}}{=}& f(x\rhd Y)+x(f)Y \ \stackrel{\eqref{eq:exten-2}}{=}\  (x\rhd f)Y + f(x\rhd Y),\\
xX\rhd fY &\stackrel{\eqref{eq:exten-4}}{=}& x\rhd (X\rhd fY) -(x\rhd X)\rhd fY\\
&\stackrel{\eqref{eq:post-HAD-1}}{=}& x\rhd (X_1\rhd f)(X_2\rhd Y)\\
&&- ((x\rhd X_1)\rhd f)(X_2\rhd Y) - (X_1\rhd f)((x\rhd X_2)\rhd Y)\\
&=& (x\rhd (X_1\rhd f))(X_2\rhd Y) - ((x\rhd X_1)\rhd f)(X_2\rhd Y)\\
&& + (X_1\rhd f)(x\rhd (X_2\rhd Y)) - (X_1\rhd f)((x\rhd X_2)\rhd Y)\\
&\stackrel{\eqref{eq:exten-4}}{=}& (xX_1\rhd f)(X_2\rhd Y)+(X_1\rhd f)(xX_2\rhd Y),
\end{eqnarray*}
so Eq.~\eqref{eq:post-HAD-3} holds. Also, we have
\begin{eqnarray*}
f\rhd(X\rhd Y) &\stackrel{\eqref{eq:exten-1}}{=}& f(X\rhd Y) \ \stackrel{\eqref{eq:post-HAD-2}}{=} \ fX \rhd Y \ \stackrel{\eqref{eq:exten-1}}{=}\ f(1_\lR\rhd X)\rhd Y,\\
X\rhd(f\rhd Y) &\stackrel{\eqref{eq:exten-1}}{=}& X\rhd fY \ \stackrel{\eqref{eq:post-HAD-3}}{=}\ (X_1\rhd f)(X_2\rhd Y)\ \stackrel{\eqref{eq:post-HAD-2}}{=}\ (X_1\rhd f)X_2\rhd Y\ =\ X_1(X_2\rhd f)\rhd Y.
\end{eqnarray*}
Hence, Eq.~\eqref{eq:post-HAD-4} also holds.
\end{proof}

\begin{remark}
  When $\lR=\bk$, this result recovers \cite[Theorem 2.8]{LST}, which shows that  for a  post-Lie algebra $(\frakg,[-,-]_\frakg,\rhd)$, its universal enveloping algebra $(\uu(\g),\rhd)$ is a post-Hopf algebra.
\end{remark}

Now we consider  an action post-Lie-Rinehart algebra $(\lR,\lL=\lR\otimes \frakg,[-,-]_\lL,\overline{\rho},\overline{\rhd})$ given in Proposition~\ref{thm:post-LR-derivation} from a post-Lie algebra $(\frakg,[-,-]_\frakg,\rhd)$ with an action $\rho:\frakg_\rhd\to \Der(\lR)$. The action $\rho$ can be lifted to a module algebra action of $\mathscr U(\frakg_\rhd)$ on $\lR$ by the universal property of $\uu(\frakg_\rhd)$.
On the other hand, for the post-Hopf algebra $(\mathscr U(\frakg),\rhd)$, there is a natural Hopf algebra isomorphism $\Phi:\mathscr U(\frakg_\rhd)\to\mathscr U(\frakg)_\rhd$, called  the Oudom-Guin isomorphism~\cite{ELM}.
Pulling back by $\Phi^{-1}$, we obtain a module algebra action $\rightharpoonup$ of $\mathscr U(\frakg)_\rhd$ on $\lR$.
Then by Theorem~\ref{thm:action-post-Hopf}, we have an action post-Hopf algebroid $(\lR\otimes \mathscr U(\frakg),\Delta,\varepsilon,S,\iota,\overline{\rhd},\theta)$ over $\lR$.

\begin{coro}\label{coro:universal-action}
With the above notations, the universal enveloping algebra $\mathscr U_\lR(\lL)$ of the action post-Lie-Rinehart algebra $(\lR,\lL=\lR\otimes \frakg,[-,-]_\lL,\overline{\rho},\overline{\rhd})$ is a post-Hopf algebroid, which is isomorphic to the action post-Hopf algebroid $(\lR\otimes \mathscr U(\frakg),\Delta,\varepsilon,S,\iota,\overline{\rhd},\theta)$ over $\lR$.

\end{coro}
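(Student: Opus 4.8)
The plan is to produce an explicit isomorphism $\Psi\colon \mathscr U_\lR(\lL)\to \lR\otimes\mathscr U(\frakg)$ of Hopf $\lR$-algebras and then upgrade it to an isomorphism of post-Hopf algebroids by checking that it intertwines the two post-products on a generating set and invoking the recursive nature of these products. First I would identify the underlying Hopf $\lR$-algebras. Since $\lL=\lR\otimes\frakg$ carries the $\lR$-bilinear extension of $[-,-]_\frakg$ as its Lie $\lR$-bracket (by \eqref{eq:action-post-LR-1}) and thus has trivial anchor as a Lie $\lR$-algebra, $\iota_\lR(\lR)$ is central in $\mathscr U_\lR(\lL)$, and the description used in Proposition~\ref{prop:uea-LRA} gives $\mathscr U_\lR(\lL)\cong T_\lR(\lR\otimes\frakg)/(\text{Lie relations})$. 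Using $(\lR\otimes\frakg)^{\otimes_\lR n}\cong \lR\otimes\frakg^{\otimes n}$ one obtains $T_\lR(\lR\otimes\frakg)\cong \lR\otimes T(\frakg)$, and factoring out the Lie relations of $\frakg$ yields the $\lR$-algebra isomorphism $\Psi(f\otimes x_1\cdots x_n)=f\otimes x_1\cdots x_n$ onto $\lR\otimes\mathscr U(\frakg)$. Because $\frakg$ is primitive on both sides and $\lR$ sits as in Example~\ref{ex:tensor-Hopf}, $\Psi$ is compatible with $\Delta$, $\varepsilon$ and $S$, so it is an isomorphism of Hopf $\lR$-algebras.

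Next I would verify that $\Psi$ carries the extended product $\rhd$ of Theorem~\ref{thm:uea-post-LR} to the product $\overline{\rhd}$ of Theorem~\ref{thm:action-post-Hopf} (given by \eqref{action-post-Hopf-1}). The transported product $\rhd'\coloneqq \Psi^{-1}\circ\overline{\rhd}\circ(\Psi\otimes\Psi)$ is again a weak post-Hopf algebroid product, hence obeys the same recursion \eqref{eq:exten-1}--\eqref{eq:exten-4}: here \eqref{eq:post-HAD-3} specializes to \eqref{eq:exten-3}, and \eqref{eq:post-HAD-4} applied to a primitive first argument yields \eqref{eq:exten-4}. It therefore suffices to compare $\rhd$ and $\rhd'$ on $\lR\oplus\lL$. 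The cases $f\rhd X=fX$ and $x\rhd y$ for $x,y\in\frakg$ are immediate from \eqref{eq:post-HAD-5} and the fact that $x\,\overline{\rhd}\, y=x\rhd y$ by primitivity. The critical case is $x\rhd f=\rho(x)(f)$ for $x\in\frakg$: expanding $x\,\overline{\rhd}\,(f\otimes 1_\frakg)=(x\rightharpoonup f)(1\rhd 1)+f(x\rhd 1)$ and using $x\rhd 1=\varepsilon(x)1=0$ reduces the right-hand side to $x\rightharpoonup f$, so everything hinges on identifying $x\rightharpoonup f$ with $\rho(x)(f)$.

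The main obstacle is precisely this last identification. Since $\rightharpoonup$ is the $\mathscr U(\frakg)_\rhd$-action obtained by pulling back the lift of $\rho\colon \frakg_\rhd\to\Der(\lR)$ along the Oudom-Guin isomorphism $\Phi^{-1}\colon \mathscr U(\frakg)_\rhd\to\mathscr U(\frakg_\rhd)$, I must use that $\Phi$ restricts to the identity on $\frakg=\frakg_\rhd$, so that $\Phi^{-1}(x)=x$ and hence $x\rightharpoonup f=\rho(x)(f)=x(f)$. With all generator values matched and both products governed by the same recursion, the determinism of a weak post-Hopf algebroid product forces $\rhd=\rhd'$, i.e.\ $\Psi$ intertwines the two post-products.

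Finally, transporting the antipode-type map $\theta$ of the action post-Hopf algebroid along $\Psi$ endows $\mathscr U_\lR(\lL)$ with an operator satisfying \eqref{post-anti-coalg}--\eqref{Post-con'} of Definition~\ref{defi:post-Hopf}, so $\mathscr U_\lR(\lL)$ is in fact a post-Hopf algebroid and $\Psi$ is an isomorphism of post-Hopf algebroids. As a consistency check, one may note that the Grossman-Larson Hopf algebroid $\mathscr U_\lR(\lL)_\rhd$ coincides with the universal enveloping algebra of the sub-adjacent Lie-Rinehart algebra $\mathscr{\lR}_{\overline{\rhd}}$, which by Example~\ref{ex:uea_action_LR} is the action Hopf algebroid $\lR\#\mathscr U(\frakg_\rhd)\cong \lR\#\mathscr U(\frakg)_\rhd$ via Oudom-Guin, matching the Grossman-Larson Hopf algebroid of the action post-Hopf algebroid as asserted in Theorem~\ref{thm:action-post-Hopf}.
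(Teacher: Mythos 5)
Your proposal is correct and follows essentially the same route as the paper's proof: identify $\mathscr U_\lR(\lL)$ with the tensor product Hopf $\lR$-algebra $\lR\otimes\mathscr U(\frakg)$, show that the recursively extended product of Theorem~\ref{thm:uea-post-LR} coincides with the action formula \eqref{action-post-Hopf-1}, and import $\theta$ via Theorem~\ref{thm:action-post-Hopf}. Your recursion-uniqueness argument, together with the observation that the Oudom-Guin isomorphism is the identity on primitives (so that $x\rightharpoonup f=\rho(x)(f)$), simply makes explicit the product identification that the paper asserts without detail.
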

\begin{proof}
First by Proposition~\ref{prop:uea-LRA}, $(\mathscr U_\lR(\lL),\Delta,\varepsilon,S)$ is a Hopf $\lR$-algebra, since $\lL=\lR\otimes \frakg$ is a Lie $\lR$-algebra.
Also by Theorem~\ref{thm:uea-post-LR}, we know that $(\mathscr U_\lR(\lL),\Delta,\varepsilon,S,\iota,\overline{\rhd})$ is a weak post-Hopf algebroid.

Moreover, we have a natural identification between
$\mathscr U_\lR(\lL)$ and the tensor product Hopf $\lR$-algebra $\lR\otimes \mathscr U(\frakg)$.
Elements in $\mathscr U_\lR(\lL)$ have the form $\sum_i f_ix_i$ with $f_i\in \lR$ and $x_i\in\mathscr U(\frakg)$, and the post-Hopf product $\overline{\rhd}$ of $\mathscr U_\lR(\lL)$ extended from the post-Lie product $\overline{\rhd}$ of $\lL$ is just given by Eqs.~\eqref{action-post-Hopf-1}. On the other hand, an algebra anti-automorphism $\theta$ of $\mathscr U_\lR(\lL)$ can be given by \eqref{action-post-Hopf-2}, so that $\mathscr U_\lR(\lL)$ is the action post-Hopf algebroid $(\lR\otimes \mathscr U(\frakg),\Delta,\varepsilon,S,\iota,\overline{\rhd},\theta)$ over $\lR$ as in Theorem~\ref{thm:action-post-Hopf}.
\end{proof}

Given a magma algebra $(V,\curvearrowright)$, there exists a unique extended product $\rhd$ on the tensor algebra $T(V)$ such that
the coshuffle Hopf algebra $(T(V),\cdot,\Delta^{\rm cosh},S,\rhd_V)$ is a post-Hopf algebra~\cite[Theorem~2.9]{LST}.
Now for any linear map $f_V:V\to \Der(\lR)$, it can be extended to a Lie algebra homomorphism from $\Lie(V)$ to $\Der(\lR)$. Since  $T(V)$ is the universal enveloping algebra of the Lie algebra $\Lie(V)$, it follows that $\lR$ is a $T(V)$-module algebra. Then using the Hopf algebra isomorphism $K:T(V)_{\rhd_V}\to T(V)$, which is called Gavrilov's $K$-map (\cite[Theorem~3]{AEMM}
), we obtain the following $T(V)_{\rhd_V}$-module algebra action $\rightharpoonup$ on $\lR$ recursively given by:
\begin{eqnarray*}
v\rightharpoonup f &=& f_V(v)(f),\\
vv_1\cdots v_n\rightharpoonup f &=& v\rightharpoonup(v_1\cdots v_n\rightharpoonup f) - (v\rhd_V v_1\cdots v_n)\rightharpoonup f
\end{eqnarray*}
for any $f\in \lR$ and $v,v_1,\dots,v_n\in V$.
Then by Theorem~\ref{thm:action-post-Hopf}, we obtain
the  action post-Hopf algebroid over $\lR$,
\begin{eqnarray*}
T_\lR(V) &\coloneqq& (\lR \otimes T(V),\iota,\overline{\rhd}_V,\theta).
\end{eqnarray*}

By Corollary \ref{coro:universal-action}, we obtain the following characterization of the universal enveloping algebra of the free post-Lie-Rinehart algebra.

\begin{coro}\label{coro:free-post-Hopf}
The universal enveloping algebra of the free post-Lie-Rinehart algebra   generated by a magma algebra $(V,\curvearrowright)$ together with a linear map $f_V:V\to\Der(\lR)$ given in Theorem \ref{thm:free-post-LR} is   the action post-Hopf algebroid $T_\lR(V)=(\lR \otimes T(V),\iota,\overline{\rhd}_V,\theta)$ over $\lR$.
\end{coro}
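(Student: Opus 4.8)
The plan is to recognize the free post-Lie-Rinehart algebra ${\rm PostLR}(V)$ as a special action post-Lie-Rinehart algebra and then invoke Corollary~\ref{coro:universal-action} directly. By its construction in Theorem~\ref{thm:free-post-LR}, ${\rm PostLR}(V)$ is exactly the action post-Lie-Rinehart algebra of Proposition~\ref{thm:post-LR-derivation} built from the free post-Lie algebra $\frakg={\rm PostLie}(V)=({\rm Lie}(V),[-,-]_{{\rm Lie}(V)},\rhd_V)$ together with the anchor $\rho_V\colon{\rm PostLie}(V)_{\rhd_V}\to\Der(\lR)$. Since the universal enveloping algebra of a free Lie algebra is the tensor algebra, $\mathscr U({\rm Lie}(V))=T(V)$, so Corollary~\ref{coro:universal-action} applies verbatim with $\frakg={\rm Lie}(V)$ and produces a post-Hopf algebroid isomorphism $\mathscr U_\lR(\lR\otimes{\rm Lie}(V))\cong(\lR\otimes T(V),\Delta,\varepsilon,S,\iota,\overline{\rhd}_V,\theta)$. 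Thus the only substantive point remaining is to verify that the module-algebra action of $T(V)_{\rhd_V}$ on $\lR$ produced inside Corollary~\ref{coro:universal-action} coincides with the action $\rightharpoonup$ defined just above through Gavrilov's $K$-map, so that the target really is $T_\lR(V)$.

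First I would make the two actions explicit and observe that both are module-algebra actions of the same cocommutative Hopf algebra $T(V)_{\rhd_V}$. The action in Corollary~\ref{coro:universal-action} is obtained by lifting $\rho_V$ on ${\rm Lie}(V)_{\rhd_V}$ to a module-algebra action of $\mathscr U({\rm Lie}(V)_{\rhd_V})$ and transporting it along the Oudom-Guin isomorphism $\Phi\colon\mathscr U(\frakg_\rhd)\to\mathscr U(\frakg)_\rhd=T(V)_{\rhd_V}$; the action $\rightharpoonup$ defining $T_\lR(V)$ is obtained by lifting the extension of $f_V$ to a module-algebra action of $T(V)=\mathscr U({\rm Lie}(V))$ and transporting along $K\colon T(V)_{\rhd_V}\to T(V)$. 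Each one amounts to an algebra homomorphism $T(V)_{\rhd_V}\to\End(\lR)$ with respect to the Grossman-Larson product $*_{\rhd_V}$ and the composition product, and on a generator $v\in V$ both give $v\rightharpoonup f=f_V(v)(f)$, since $\rho_V(v)=f_V(v)$ and both $\Phi$ and $K$ restrict to the identity on the degree-one part $V$. In fact the recursion defining $\rightharpoonup$ is precisely this module-algebra identity rewritten through $v\cdot w=v*_{\rhd_V}w-v\rhd_V w$, which makes the agreement on higher tensors transparent.

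The key step, and the main potential obstacle, is therefore to show that a module-algebra action of $T(V)_{\rhd_V}$ is determined by its restriction to $V$; equivalently, that $V$ generates $T(V)_{\rhd_V}$ as an associative algebra under $*_{\rhd_V}$. I would prove this by a degree induction based on $v*_{\rhd_V}w=v\cdot w+v\rhd_V w$ for $v\in V$: because $\rhd_V$ preserves the concatenation degree of its right argument, the correction term $v\rhd_V w$ lies in $V^{\otimes n}$ whenever $w\in V^{\otimes n}$, so every homogeneous tensor $v\cdot w\in V^{\otimes(n+1)}$ lies in the $*_{\rhd_V}$-subalgebra generated by $V$ once $V^{\otimes n}$ does. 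Granting this, the two algebra homomorphisms $T(V)_{\rhd_V}\to\End(\lR)$ agree on a generating set and hence coincide, so the action appearing in Corollary~\ref{coro:universal-action} is exactly $\rightharpoonup$. Consequently the isomorphism furnished there identifies $\mathscr U_\lR(\lR\otimes{\rm Lie}(V))$ with the action post-Hopf algebroid $T_\lR(V)=(\lR\otimes T(V),\iota,\overline{\rhd}_V,\theta)$, which is the assertion.
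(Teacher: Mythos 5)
Your proposal is correct and follows the paper's own route: the paper obtains this corollary directly from Corollary~\ref{coro:universal-action} applied to the action post-Lie-Rinehart algebra ${\rm PostLR}(V)=(\lR,\lR\otimes{\rm Lie}(V),[-,-],\overline{\rho}_V,\overline{\rhd}_V)$, using $\mathscr U({\rm Lie}(V))=T(V)$, and gives no further argument. Your extra step---checking that the action $\rightharpoonup$ defined via Gavrilov's $K$-map coincides with the Oudom-Guin-transported action of Corollary~\ref{coro:universal-action}, because both are algebra homomorphisms $(T(V),*_{\rhd_V})\to\End(\lR)$ agreeing on $V$, and $V$ generates $(T(V),*_{\rhd_V})$ by the degree induction $v*_{\rhd_V}w=v\cdot w+v\rhd_V w$ with $v\rhd_V w$ degree-preserving---is precisely the point the paper leaves implicit, and you carry it out correctly.
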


\begin{exam}\label{ex:free-post-Hopf}
As shown in Example~\ref{ex:free-post-Lie}, the free post-Lie-Rinehart algebra  generated by the triple $(\bk\huaO,\curvearrowright,f_{\bk\huaO})$ is the action post-Lie-Rinehart algebra
$${\rm PostLR}(\bk\huaO)=(\lR,\lR\otimes {\rm Lie}(\bk\huaO),[-,-]_{\lR\otimes {\rm Lie}(\bk\huaO)},\overline{\rho}_{\bk\huaO},\overline{\rhd}_{\bk\huaO}).$$

According to Corollary~\ref{coro:free-post-Hopf},
the universal enveloping algebra of ${\rm PostLR}(\bk\huaO)$ is the action post-Hopf algebroid $T_\lR(\bk\huaO)=(\lR \otimes T(\bk\huaO),\iota,\overline{\rhd}_{\bk\huaO},\theta)$ over $\lR$, where the post-Hopf algebra $(T(\bk\huaO),\rhd_{\bk\huaO})$ is the universal enveloping algebra of the free post-Lie algebra ${\rm PostLie}(\bk\huaO)$, and
its sub-adjacent Hopf algebra $T(\bk\huaO)_{\rhd_{\bk\huaO}}$ is the Grossman-Larson Hopf algebra of ordered planar forests.

\end{exam}

\section{Post-Lie-Rinehart algebras and post-Hopf algebroids for numerical analysis}\label{sec:app}

In this section, we detail the natural post-Lie-Rinehart algebras and post-Hopf algebroid structures appearing in numerical analysis.

Let $\MM$ be a smooth manifold equipped with a frame basis $(E_d)$, defined globally for clarity.
The smooth vector fields $\XM$ are naturally equipped with the Jacobi bracket $\llbracket -,-\rrbracket_J$ and the (curvature-free) Weitzenb\"ock connection,
\[
Y \rhd X=\sum_i Y[x^i] E_i,\quad X=x^i E_i,
\]
where we use the Einstein summation convention.
Let us further assume that the connection $\rhd$ has constant torsion $\nabla T=0$, or equivalently that $\MM$ is locally a Lie group.
In this case, the torsion defines the bracket:
\[
[X,Y]=-T(X,Y)=x^i y^j \llbracket E_i,E_j\rrbracket_J,\quad X=x^i E_i,\quad Y=y^j E_j.
\]
The two brackets are linked by the identity
\[
\llbracket X,Y \rrbracket_J=[X,Y]+X\rhd Y- Y\rhd X.
\]
Then, the connection algebra $(\XM,[-,-],\rhd)$ is a post-Lie algebra \cite{ML}.

As a specific application of Theorem~\ref{thm:uea-post-LR} and Theorem~\ref{thm:GL-bialgebroid}, we have the following result.
\begin{prop}
The tuple $(\CM,\XM,[-,-],\rho,\rhd)$ is a post-Lie-Rinehart algebra and its universal enveloping algebra $(\mathscr U_{\CM}(\XM),\cdot,\Delta^{\rm cosh},\rhd)$ is a weak post-Hopf algebroid over $\CM$, where the anchor map $\rho$ is given by
\[
\rho(X)(\phi)=X[\phi],\quad X=x^i E_i,\quad \phi\in \CM.
\]
Moreover, $(\mathscr U_{\CM}(\XM),*_\rhd,\Delta^{\rm cosh})$ is the associated Grossman-Larson bialgebroid.
\end{prop}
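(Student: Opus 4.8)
The plan is to establish the first assertion by a direct verification of the three axioms defining a post-Lie-Rinehart algebra, and then to deduce the remaining two assertions by invoking Theorem~\ref{thm:uea-post-LR} and Theorem~\ref{thm:GL-bialgebroid}. For axiom~(i), I would simply recall from \cite{ML} that the connection algebra $(\XM,[-,-],\rhd)$ is a post-Lie algebra, so nothing new is required there. For axiom~(ii), the key observation is that the sub-adjacent bracket of this post-Lie algebra coincides with the Jacobi bracket: by the definition $[X,Y]_\rhd = X\rhd Y - Y\rhd X + [X,Y]$ and the identity $\llbracket X,Y\rrbracket_J = [X,Y] + X\rhd Y - Y\rhd X$ recorded above, one has $\XM_\rhd = (\XM,\llbracket-,-\rrbracket_J)$, the ordinary Lie algebra of vector fields. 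The anchor $\rho(X)(\phi)=X[\phi]$ is then $\CM$-linear in $X$, since $(fX)[\phi]=f\,X[\phi]$, and it is a Lie algebra homomorphism into $\Der(\CM)$ because the Jacobi bracket of vector fields corresponds to the commutator of the associated derivations on $\CM$, which is the classical fact $\llbracket X,Y\rrbracket_J[\phi]=X[Y[\phi]]-Y[X[\phi]]$.

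For axiom~(iii) I would use the explicit frame formula $Y\rhd X=\sum_i Y[x^i]E_i$. Writing $X=x^iE_i$ and $Y=y^jE_j$, condition~\eqref{eq:PLR-1} is immediate from $(fX)\rhd Y=\sum_j (fX)[y^j]E_j=f\sum_j X[y^j]E_j=f(X\rhd Y)$, while condition~\eqref{eq:PLR-2} follows from the Leibniz rule for the vector field $X$, namely $X\rhd(fY)=\sum_j X[f\,y^j]E_j=\sum_j\big(X[f]y^j+f\,X[y^j]\big)E_j = X[f]\,Y+f(X\rhd Y)=\rho(X)(f)Y+f(X\rhd Y)$. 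I would also record here the point that $\XM$ is a Lie $\CM$-algebra for the torsion bracket: since $[X,Y]=x^iy^j\llbracket E_i,E_j\rrbracket_J$ is manifestly $\CM$-bilinear (torsion being tensorial), the bracket $[-,-]$ is $\CM$-linear, which is precisely what is needed to enter the hypotheses of the enveloping-algebra constructions. Together these verifications show that $(\CM,\XM,[-,-],\rho,\rhd)$ is a post-Lie-Rinehart algebra.

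With the post-Lie-Rinehart structure in hand, the second assertion is a direct application of Theorem~\ref{thm:uea-post-LR}: the post-Lie product $\rhd$ extends from $\XM$ to the universal enveloping algebra $\mathscr U_{\CM}(\XM)$, which is a Hopf $\CM$-algebra by Proposition~\ref{prop:uea-LRA} with coproduct $\Delta^{\rm cosh}$ making $\XM$ primitive, and the theorem guarantees that $(\mathscr U_{\CM}(\XM),\Delta^{\rm cosh},\varepsilon,S,\iota,\rhd)$ is a weak post-Hopf algebroid over $\CM$. The final assertion then follows from Theorem~\ref{thm:GL-bialgebroid}, which associates to any weak post-Hopf algebroid its Grossman-Larson bialgebroid with product $x*_\rhd y = x_1(x_2\rhd y)$; applied in the present situation this produces exactly $(\mathscr U_{\CM}(\XM),*_\rhd,\Delta^{\rm cosh})$.

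The only genuine subtlety, and hence the main point to handle with care, is the bookkeeping of which bracket plays which role. One must make $\XM$ a Lie $\CM$-algebra via the $\CM$-bilinear torsion bracket $[-,-]$ (not via the Jacobi bracket, which is only $\RR$-bilinear), while the anchor $\rho$ is required to be a homomorphism for the sub-adjacent Jacobi bracket $\llbracket-,-\rrbracket_J=[-,-]_\rhd$. Once these two roles are kept distinct, every verification collapses to the derivation and Leibniz properties of vector fields, and the two structural theorems supply the rest.
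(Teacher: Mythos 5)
Your proposal is correct and follows essentially the same route as the paper: the paper offers no detailed argument, simply citing \cite{ML} for the post-Lie structure of the connection algebra and then invoking Theorem~\ref{thm:uea-post-LR} and Theorem~\ref{thm:GL-bialgebroid}, exactly as you do. Your explicit verifications --- that the sub-adjacent bracket is the Jacobi bracket (so the anchor is the tautological Lie homomorphism into $\Der(\CM)$), that the torsion bracket is the $\CM$-bilinear one making $\XM$ a Lie $\CM$-algebra, and that \eqref{eq:PLR-1}--\eqref{eq:PLR-2} reduce to the Leibniz rule in the frame formula --- correctly fill in the hypotheses the paper leaves implicit.
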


We rely on specific sub-structures of the universal enveloping algebra $\mathscr U_{\CM}(\XM)$ in numerical analysis.
Let $F\in \XM$ be a given smooth Lipschitz vector field. Then, $F$ generates a post-Lie sub-algebra $(\XMF,[-,-],\rhd)$:
\[
\XMF=\Span_{\RR}(
F,\ F\rhd F,\ [F,F\rhd F],\ (F\rhd F)\rhd F,\ F\rhd (F\rhd F),\ \dots
).
\]
This yields the Hopf subalgebras $(\mathscr U(\XMF),\cdot,\Delta^{\rm cosh})$ and $(\mathscr U(\XMF),*_\rhd,\Delta^{\rm cosh})$, commonly represented with ordered planar forests \cite{LMK}.
Analogously, let $\End_F(\XM)$ be the space of endomorphisms on $\XM$ generated from $F$, $\rhd$ and $[-,-]$:
\[
\End_F(\XM)=\Span_{\RR}(
\id,\ F\rhd -,\ -\rhd F,\ [-,F\rhd F],\ [F,-\rhd F],\ [F,F\rhd -],\ (-\rhd F)\rhd F,\ \dots
).
\]
Let $\CMF$ be the subalgebra of $\CM$ generated by the following functions, with elements called {\bf aromas} in numerical analysis \cite{CM, IQT, Bo}:
\[
u_1^{i_1}(E_{i_2})\cdots u_{n-1}^{i_{n-1}}(E_{i_n}) u_n^{i_n}(E_{i_1}),\quad u_k\in \End_F(\XM),\quad u_k(X)=u_k^i(X)E_i.
\]

Following Corollaries \ref{coro:universal-action} and \ref{coro:GL_Hopf_algebroid}, we obtain the following action Hopf algebroid structures using the anchor map $\rho$.
\begin{theorem}
The tuple $(\CMF,\CMF\otimes\XMF,[-,-],\rho,\rhd)$ is an action post-Lie-Rinehart algebra, with elements called {\bf aromatic} vector fields.
Its universal enveloping algebra is the action post-Hopf algebroid $(\CMF\otimes \mathscr U(\XMF),\cdot,\Delta^{\rm cosh},\rhd,\theta)$ over $\CMF$, with the associated Grossman-Larson Hopf algebroid $(\CMF\otimes\mathscr U(\XMF),*_\rhd,\Delta^{\rm cosh},\theta)$ over $\CMF$.
\end{theorem}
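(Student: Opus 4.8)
The plan is to recognize all three asserted structures as instances of the general machinery already developed, namely Proposition~\ref{thm:post-LR-derivation}, Corollary~\ref{coro:universal-action} and Corollary~\ref{coro:GL_Hopf_algebroid}, so that essentially all the work collapses onto a single geometric closure property. Concretely, I would apply Proposition~\ref{thm:post-LR-derivation} with the post-Lie algebra $\frakg=(\XMF,[-,-],\rhd)$, the base algebra $\lR=\CMF$, and the anchor $\rho(X)(\phi)=X[\phi]$; once its hypotheses are checked, the two corollaries directly produce the universal enveloping algebra and its Grossman-Larson Hopf algebroid with no further computation.

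First I would record that $(\XMF,[-,-],\rhd)$ is a post-Lie algebra: by construction $\XMF$ is the smallest subspace of $\XM$ containing $F$ and stable under both $[-,-]$ and $\rhd$, hence it is a post-Lie subalgebra of the connection algebra $(\XM,[-,-],\rhd)$ and inherits identities \eqref{Post-L-1}--\eqref{Post-L-2}. To invoke Proposition~\ref{thm:post-LR-derivation} it then remains to verify that $\rho$ restricts to a Lie algebra homomorphism $(\XMF)_\rhd\to\Der(\CMF)$. The homomorphism relation $\rho([X,Y]_\rhd)=[\rho(X),\rho(Y)]$ and the fact that each $\rho(X)$ is a derivation of $\CM$ are inherited from the full post-Lie-Rinehart algebra $(\CM,\XM,[-,-],\rho,\rhd)$ of the preceding proposition, together with the stability of $\XMF$ under $[-,-]_\rhd$. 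Thus the only genuinely new point is the stability of the base algebra: I must show $\rho(X)(\CMF)\subseteq\CMF$ for every $X\in\XMF$.

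This stability is where the real content lies, and I expect it to be the principal obstacle. Since $\CMF$ is generated by aromas, that is, full contractions $u_1^{i_1}(E_{i_2})\cdots u_n^{i_n}(E_{i_1})$ with $u_k\in\End_F(\XM)$, the Leibniz rule reduces the problem to differentiating a single such contraction along $X\in\XMF$ and showing the result is again a polynomial in aromas. Expanding $X[\,\cdot\,]$ through the connection, each factor $u_k^{i}(E_j)$ contributes its covariant derivative, which I would rewrite as the value of a new endomorphism assembled from $u_k$, $X$, the connection $\rhd$ and the torsion bracket $[-,-]$. The decisive structural fact is that $\End_F(\XM)$ is defined to be closed under the operations $u\mapsto X\rhd u(-)$, $u\mapsto u(X\rhd -)$ and $u\mapsto [X,u(-)]$ for $X\in\XMF$; hence every differentiated factor again lies in $\End_F(\XM)$ and the total derivative is a sum of aromas, i.e. lies in $\CMF$. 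Checking that the covariant-derivative expansion of a contraction yields precisely endomorphisms in this generated span is the combinatorial heart of the proof.

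Granting the hypotheses of Proposition~\ref{thm:post-LR-derivation}, the tuple $(\CMF,\CMF\otimes\XMF,[-,-],\rho,\rhd)$ is an action post-Lie-Rinehart algebra. Its universal enveloping algebra is then, by Corollary~\ref{coro:universal-action}, the action post-Hopf algebroid $(\CMF\otimes\mathscr U(\XMF),\cdot,\Delta^{\rm cosh},\rhd,\theta)$ over $\CMF$, with $\theta$ supplied by \eqref{action-post-Hopf-2}, where $\mathscr U(\XMF)$ is the enveloping algebra of the Lie algebra $(\XMF,[-,-])$. Finally, applying Corollary~\ref{coro:GL_Hopf_algebroid} to this post-Hopf algebroid adjoins the antipode $\theta$ to the Grossman-Larson bialgebroid, giving the associated Grossman-Larson Hopf algebroid $(\CMF\otimes\mathscr U(\XMF),*_\rhd,\Delta^{\rm cosh},\theta)$ over $\CMF$, which by Theorem~\ref{thm:action-post-Hopf} is the action Hopf algebroid built from the sub-adjacent Hopf algebra $\mathscr U(\XMF)_\rhd$.
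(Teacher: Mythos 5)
Your proposal is correct and follows essentially the same route as the paper: the paper gives no separate proof of this theorem, simply invoking Proposition~\ref{thm:post-LR-derivation} together with Corollaries~\ref{coro:universal-action} and~\ref{coro:GL_Hopf_algebroid} once $(\XMF,[-,-],\rhd)$ is recognized as a post-Lie subalgebra and $\rho$ as an anchor into $\Der(\CMF)$. Your additional verification that aromas are stable under differentiation along $\XMF$ (via closure of $\End_F(\XM)$ under $u\mapsto X\rhd u(-)$, etc.) is exactly the closure property the paper leaves implicit in its definitions of $\XMF$, $\End_F(\XM)$ and $\CMF$, so it strengthens rather than diverges from the paper's argument.
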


\begin{remark}
Let $\Div\colon \XM\rightarrow\CM$ be the divergence operator associated to the connection $\rhd$:
\[
  \Div(X)=E_i[x^i], \quad X=x^i E_i\in \XM.
\]
The divergence restricts to aromatic vector fields $\Div\colon \XMF\rightarrow\CMF$, but aromas are not in general all generated by the divergence: $\Div(\XMF)\subsetneq \CMF$, as observed in \cite{FMM}.
\end{remark}

Let the flow $\varphi_t(y_0)$, $t\in\mathbb R$ be the solution of the initial value problem on $\MM$:
\begin{equation}
\label{eq:ODE}
y'(t)=F(y(t)),\quad y(0)=y_0\in\MM.
\end{equation}
The Taylor expansion of the pullback by the exact flow of \eqref{eq:ODE} is described by the Grossman-Larson exponential:
\[
\varphi_t^*\phi=\exp^{*_\rhd}(tF)\rhd\phi,\quad \exp^{*_\rhd}(X)=\sum_{n=0}^\infty \frac{1}{n!} X^{*_\rhd n},\quad \phi\in \mathcal{C}^\infty(\MM).
\]
Most existing numerical flows $\psi_t$ for solving \eqref{eq:ODE} have Taylor expansions that rewrite with formal series over the Hopf algebra $\mathscr U(\mathfrak{X}_{tF}(\MM))$, that we denote by $\overline{\mathscr U(\mathfrak{X}_{tF}(\MM))}$.
This is the case for Lie-group methods \cite{IMNZ}, and, in particular, for the Lie-Euler method, that follows the geodesics for $\rhd$:
\begin{equation}
\label{eq:Lie_Euler}
\psi_t(y_0)=\exp_{y_0}(tF(y_0)).
\end{equation}
The Taylor expansion of the pullback by Lie-group methods is typically expressed using the concatenation product. For instance, the Taylor expansion of the Lie-Euler flow \eqref{eq:Lie_Euler} is described by the concatenation exponential
\[
\psi_t^*\phi=\exp^\cdot(tF)\rhd\phi,\quad \exp^\cdot(X)=\sum_{n=0}^\infty \frac{1}{n!} X^{\cdot n},\quad \phi\in \mathcal{C}^\infty(\MM),
\]
where the concatenation product acts on functions as the following differential operator
\[
(X_n\cdots X_1)[\phi]=x_n^{i_n}\dots x_1^{i_1} E_{i_n}[\dots E_{i_1}[\phi]\dots],\quad \phi\in \mathcal{C}^\infty(\MM).
\]
A method $\psi_t$ is of order $p$ if the Taylor expansions of $\varphi_t^*\phi$ and $\psi_t^*\phi$ match for all terms of order at most $p$. The Lie-Euler method \eqref{eq:Lie_Euler} is of order one only for instance.

\begin{remark}
The Grossman-Larson product $*_\rhd$ corresponds to the composition of flows, in the sense that
\[
(\psi^2_t\circ \psi^1_t)^*\phi=(S^2_t *_\rhd S^1_t)\rhd \phi, \quad \psi^{k*}_t\phi=S^k_t\rhd \phi, \quad S^k_t\in \overline{\mathscr U(\mathfrak{X}_{tF}(\MM))}.
\]
The description of the composition of flows in simpler cases has been extensively studied in the numerical literature using tree structures and Butcher series for the description of the Grossman-Larson Hopf algebra.
In particular, the dual of the Grossman-Larson Hopf algebra $T(\bk\huaO)_{\rhd_{\bk\huaO}}$
is isomorphic to the Munthe-Kaas-Wright Hopf algebra \cite{MW, EFR}.
In the pre-Lie context, the dual of the action pre-Hopf algebroid of aromatic forests \cite{CM,IQT} is isomorphic to the Butcher-Connes-Kreimer Hopf algebroid \cite{Bo, BL}.
\end{remark}

Let us now define aromatic numerical flows by relying on the post-Hopf algebroid structure of $\CMF\otimes \mathscr U(\XMF)$ (see also \cite{MKV, Bo}). Note that the differential operators defined using functions in $\CMF$ do still act on all smooth functions in $\mathcal{C}^\infty(\MM)$.
\begin{defn}
A flow $\psi_t$ for solving \eqref{eq:ODE} is {\bf aromatic} if the Taylor expansion of the pullback $\psi_t^*\phi$ is a formal series in $\overline{\mathcal{C}^\infty_{tF}(\MM)\otimes\mathscr U(\mathfrak{X}_{tF}(\MM))}$:
\[\psi_t^*\phi=S_t\rhd \phi,\quad S_t\in \overline{\mathcal{C}^\infty_{tF}(\MM)\otimes\mathscr U(\mathfrak{X}_{tF}(\MM))},\quad \phi\in \mathcal{C}^\infty(\MM).\]
\end{defn}

\begin{exam}
\label{exam:aromaticLieEuler}
Aromatic methods are straightforwardly obtained from Lie-group methods by substituting $F=f^i E_i$ with an aromatic vector field $t\hat{F}_t\in \mathcal{C}^\infty_{tF}(\MM)\otimes \mathfrak{X}_{tF}(\MM)$.
For instance, apply the Lie-Euler method \eqref{eq:Lie_Euler} with the preprocessed aromatic vector field
\[
t\hat{F}_t=
tF
+\frac{t^2}{2}F\rhd F
-\frac{t^3}{3} (F\rhd F)\rhd F
-\frac{t^3}{12} E_i[F[f^i]] F
+\frac{t^3}{6} [F,F\rhd F].
\]
Then, the resulting flow $\psi_t$ is aromatic.
\end{exam}

The application of aromatic flows in geometric numerical integration is better understood through backward error analysis \cite{HLW,LMK,Rahm}.
Under technical assumptions satisfied by large classes of numerical methods, a numerical flow may be understood as the exact flow of a formal modified differential equation $y'=\tilde{F}_t(y)$ driven by a modified vector field $t\tilde{F}_t$, that is,
\[
\psi_t^*\phi=\exp^{*_\rhd}(t\tilde{F}_t)\rhd\phi,\quad t\tilde{F}_t\in \overline{\mathcal{C}^\infty_{tF}(\MM)\otimes\mathscr U(\mathfrak{X}_{tF}(\MM))},\quad \phi\in\mathcal{C}^\infty(\MM).
\]
The geometric properties of the numerical method $\psi_t$ are read directly on $\tilde{F}_t$.
A case of importance in the literature lies in divergence-free features $\Div(F)=\Div(\tilde{F}_t)=0$ and is linked with volume-preserving flows.
In this context, one is interested in the creation of consistent divergence-free numerical methods. The design of such methods is an open problem of numerical analysis, even in the pre-Lie context.
It is showed in \cite{IQT, DL} that non-exact numerical flows described by $\overline{\mathscr U(\mathfrak{X}_{tF}(\MM))}$ cannot satisfy $\Div(\tilde{F}_t)=0$, so that the use of aromatic numerical flows is crucial to the design of divergence-free methods.

\begin{appn}
While the creation of exact divergence-free methods is an open problem, one can create pseudo-divergence-free methods by using the post-Hopf algebroid $\CMF\otimes \mathscr U(\XMF)$.
In particular, a calculation yields that the aromatic method from Example \ref{exam:aromaticLieEuler} satisfies (at least formally) third order of preservation: $\Div(\tilde{F}_t)=O(t^3)$.
In contrast, the original Lie-Euler method \eqref{eq:Lie_Euler} satisfies only the first order estimate $\Div(\tilde{F}_t)=O(t)$.
\end{appn}

Given an aromatic flow, the computation of the associated modified vector field $\tilde{F}_t$ is not direct and relies on the so-called substitution law. In the pre-Lie case, it is described with non-planar aromatic trees in \cite{Bo, BL} and the divergence-free modified aromatic vector fields $\tilde{F}_t$ are fully characterised in \cite{LMMKV, La, DL}.
The extension of these results to the post-Lie context and the design and study of convenient algebraic structures for representing aromatic vector fields and aromas are key to the creation of divergence-free aromatic numerical methods and are matter for future work.

\bibliographystyle{amsplain}

\begin{thebibliography}{99}

\bibitem{AEM} M. J. H. Al-Kaabi, K. Ebrahimi-Fard and D. Manchon, Free post-groups, post-groups from group actions, and post-Lie algebras, {\it J. Geom. Phys.} {\bf 198} (2024), Paper No. 105129, 15 pp.

\bibitem{AEMM} M. J. H. Al-Kaabi, K. Ebrahimi-Fard, D. Manchon and H. Munthe-Kaas, Algebraic aspects of connections: from torsion, curvature, and post-Lie algebras to Gavrilov's double exponential and special polynomials, \textit{J. Noncomm. Geom.} {\bf 19} (2025), 297--335.

\mbibitem{BGN}
C. Bai, L. Guo and X. Ni,  Nonabelian generalized Lax pairs, the classical Yang-Baxter equation and PostLie algebras, \emph{Comm. Math. Phys.} {\bf297} (2010), 553--596.

\mbibitem{PostG}  C. Bai, L. Guo, Y. Sheng  and R. Tang, Post-groups, (Lie-)Butcher groups and the Yang-Baxter equation, \emph{Math. Ann.} {\bf388} (2024), 3127--3167.

\bibitem{BKS} X. Bekaert, N. Kowalzig and P. Saracco, Universal enveloping algebras of Lie-Rinehart algebras: crossed products, connections, and curvature, \textit{Lett. Math. Phys.} {\bf 114} (2024), Paper No. 140, 73 pp.

\bibitem{Bo} G. Bogfjellmo, Algebraic structure of aromatic B-series, \emph{J. Comput. Dyn.} {\bf 6} (2019), 199--222.

\bibitem{BS} G. B\"ohm and K. Szlach\'anyi, Hopf algebroids with bijective antipodes: axioms integrals and duals, {\it J. Algebra} {\bf 274} (2004), 708--750.

\bibitem{BL} E. Bronasco and A. Laurent, Hopf algebra structures for the backward error analysis of ergodic stochastic differential equations, arXiv:2407.07451; \emph{To appear in Numerische Mathematik}.

\bibitem{BBH} E. Bronasco, A.   Laurent and B. Huguet, High order integration of stochastic dynamics on {R}iemannian manifolds with frozen-flow methods, arXiv:2503.21855.

\bibitem{BK} Y. Bruned and F. Katsetsiadis, Post-Lie algebras in regularity structures, \textit{Forum Math. Sigma} {\bf 11} (2023), Paper No. e98, 20 pp.

\bibitem{BM} T. Brzezi\'nski and G. Militaru, Bialgebroids, $\times_A$-bialgebras and duality, \textit{J. Algebra} {\bf 251} (2002), 279--294.


\bibitem{CM} P. Chartier and A. Murua, Preserving first integrals and volume forms of additively split systems, \emph{IMA J. Numer. Anal.} {\bf27} (2007), 381--405.


\bibitem{DL} V. Dotsenko and P. Laubie, Volume preservation of Butcher series methods from the operad viewpoint, arXiv:2411.14143.

\bibitem{ELM} K. Ebrahimi-Fard, A. Lundervold and H. Munthe-Kaas, On the Lie enveloping algebra of a post-Lie algebra, \textit{J. Lie Theory} {\bf 25} (2015), 1139--1165.

\bibitem{EFR} K. Ebrahimi-Fard and L. Rahm, A Survey on the Munthe-Kaas-Wright Hopf Algebra, \textit{J. Comput. Dyn.} {\bf 12} (2025), 48--82.


\bibitem{KS} L. El Kaoutit and P. Saracco, Topological tensor product of bimodules, complete Hopf algebroids and convolution algebras, \textit{Commun. Contemp. Math.} {\bf 21} (2019), 1--53.


\bibitem{VRP}
J. M. Fern\'{a}ndez Vilaboa, R. Gonz\'{a}lez Rodr\'{\i}guez and B. Ramos P\'{e}rez, Twisted post-Hopf algebras, twisted relative Rota-Baxter operators and Hopf trusses, {\it SIGMA Symmetry Integrability Geom. Methods Appl.} {\bf 21} (2025), Paper No. 024, 36 pp.


\bibitem{FMM} G. Fl{\o}ystad, D. Manchon and H. Munthe-Kaas, The universal pre-Lie-Rinehart algebras of aromatic trees, \textit{Springer Proc. Math. Stat.} {\bf 366} (2021), 137--159.

\bibitem{Fo}
L. Foissy, Extension of the product of a post-Lie algebra and application to the SISO feedback transformation group, \textit{Abel Symp.} {\bf 13} (2018), 369--399.

\bibitem{Gh}
A. Ghobadi, Isotropy quotients of Hopf algebroids and the fundamental groupoid of digraphs, \textit{J. Algebra} {\bf 610} (2022), 591--631.

\bibitem{GGV}
J. A. Guccione, J. J. Guccione and L. Vendramin, Yang-Baxter operators in symmetric categories, \textit{Comm. Algebra} {\bf46} (2018), 2811--2845.

\bibitem{GLS} L. Guo, H. Lang and Y. Sheng, Integration and geometrization of Rota-Baxter Lie algebras, \textit{Adv. Math.} {\bf 387} (2021), 107834.

\bibitem{HLW} E. Hairer, C. Lubich and G. Wanner, \emph{Geometric numerical integration}, volume 31 of \emph{Springer Series in Computational Mathematics}. Springer-Verlag, Berlin, second edition, 2006. Structure-preserving algorithms for ordinary differential equations.

\bibitem{HM} X. Han and S. Majid, Hopf Galois extension and twisted Hopf algebroids, {\it J. Algebra} {\bf 641} (2024) 754--794.

\bibitem{Hu90} J. Huebschmann, Poisson cohomology and quantization, \emph{J. Reine Angew. Math.} {\bf408} (1990), 57-113.

\bibitem{Hu04} J. Huebschmann, Lie-Rinehart algebras, descent, and quantization. In: Galois theory, Hopf algebras, and semiabelian categories, Fields Inst. Comm.,  vol. 43,  295-316. Amer. Math. Soc., Providence, RI (2004).

\bibitem{IMNZ} A. Iserles, H. Z. Munthe-Kaas, S. P. N{\o}rsett and A. Zanna, Lie-group methods, \emph{Acta numerica, 2000}, volume 9, 215--365. Cambridge Univ. Press, Cambridge, 2000.

\bibitem{IQT} A. Iserles, G. R. W. Quispel and P. S. P. Tse, B-series methods cannot be volume-preserving, \emph{BIT Numer. Math.} {\bf 47} (2007), 351--378.

\bibitem{JZ} J. D. Jacques and L. Zambotti, Post-Lie algebras of derivations and regularity structures, arXiv:2306.02484.

\bibitem{KT} J. Kaminker and X. Tang, Hopf algebroids and secondary characteristic classes, \textit{J. Noncommut. Geom.} {\bf 3} (2009), 1--25.

\bibitem{La} A. Laurent, The Lie derivative and Noether's theorem on the aromatic bicomplex for the study of volume-preserving numerical integrators, \emph{J. Comput. Dyn.} {\bf 11} (2024), 10--22.

\bibitem{LMMKV} A. Laurent, R. I. McLachlan, H. Z. Munthe-Kaas and O. Verdier, The aromatic bicomplex for the description of divergence-free aromatic forms and volume-preserving integrators, \emph{Forum Math. Sigma} {\bf 11} (2023), Paper No. e69, 38 pp.

\bibitem{LMKequiv} A. Laurent and H. Z. Munthe-Kaas, The universal equivariance properties of exotic aromatic B-series, \emph{Found. Comput. Math.} {\bf25} (2025), 1595--1626.


\bibitem{LV2} A. Laurent and G. Vilmart, Order conditions for sampling the invariant measure of ergodic stochastic differential equations on manifolds. \emph{Found. Comput. Math.} {\bf 22} (2022), 649--695.


\bibitem{LST} Y. Li, Y. Sheng and R. Tang, Post-Hopf algebras, relative Rota-Baxter operators and solutions to the Yang-Baxter equation, \textit{J. Noncommut. Geom.} {\bf 18} (2024), 605--630.

\bibitem{Lu} J. Lu, Hopf Algebroids and quantum groupoids, \textit{Internat. J. Math.} {\bf 7} (1996), 47--70.

\bibitem{LMK} A. Lundervold and H. Munthe-Kaas, Hopf algebras of formal diffeomorphisms and numerical integration on manifolds. In \emph{Combinatorics and physics}, volume 539 of Contemp. Math.,  295--324. Amer. Math. Soc., Providence, RI (2011).

\bibitem{Ma} K. Mackenzie, General Theories of Lie Groupoids and Lie Algebroids, Cambridge University Press, 2005.

\bibitem{Mal} G. Maltsiniotis, Groupo\"ides quantiques, {\it C. R. Acad. Sci. Paris} {\bf 314} (1992), 249--252.


\bibitem{MM} I. Moerdijk and J. Mr\v{c}un, On the universal enveloping algebra of a Lie algebroid, \textit{Proc. Amer. Math. Soc.} {\bf 138} (2010), 3135--3145.

\bibitem{ML} H. Z. Munthe-Kaas and A. Lundervold,  On post-Lie algebras, Lie-Butcher series and moving frames, \emph{Found. Comput. Math.}  {\bf13}  (2013),  583--613.

\bibitem{MSV} H. Z. Munthe-Kaas, A. Stern and O. Verdier, Invariant connections, Lie algebra actions, and foundations of numerical integration on manifolds, \emph{SIAM J. Appl. Algebra Geom.} {\bf 4} (2020),  49--68.


\bibitem{MKV} H. Z. Munthe-Kaas and O. Verdier, Aromatic Butcher series, \emph{Found. Comput. Math.} {\bf16} (2016), 183--215.

\bibitem{MW} H. Z. Munthe-Kaas and W. M. Wright, On the Hopf algebraic structure of Lie group integrators, \emph{Found. Comput. Math.} {\bf 8}  (2008), 227--257.


\bibitem{Rahm}
L. Rahm,  An operadic approach to substitution in Lie-Butcher series, \textit{Forum Math. Sigma} \textbf{10} (2022), Paper No. e20, 29 pp.


\bibitem{Ri} G. S. Rinehart, Differential forms on general commutative algebras, {\it Trans. Amer. Math. Soc.} {\bf 108} (1963), 195--222.

\bibitem{Sch} P. Schauenburg, Duals and doubles of quantum groupoids ($\times_R$-Hopf algebras), in New Trends in Hopf Algebra Theory, Contemp. Math. {\bf 267} (2000), 273--299.

\bibitem{Sci}
A. Sciandra, Yetter-Drinfeld post-Hopf algebras and Yetter-Drinfeld relative Rota-Baxter operators, \textit{J. Noncommut. Geom.} (2025), DOI 10.4171/JNCG/621.


\bibitem{Val} B. Vallette, Homology of generalized partition posets, {\em J. Pure Appl. Algebra} {\bf 208} (2007), 699--725.

\bibitem{ZZMG}
H. Zheng, L. Zhang, T. Ma and L. Guo, Rota-Baxter operators on cocommutative Hopf algebras and Hopf braces, \emph{J. Geom. Phys.} {\bf216} (2025), Paper No. 105580, 19 pp.

\end{thebibliography}

\end{document}